\numberwithin{equation}{section}
\newcommand{\ANR}{\operatorname{ANR}}
\newcommand{\CAT}{\operatorname{CAT}}
\newcommand{\R}{\operatorname{\mathbb{R}}}
\newcommand{\Sph}{\operatorname{\mathbb{S}}}
\theoremstyle{plain}
\newtheorem{thm}{Theorem}[section]
\newtheorem{lem}[thm]{Lemma}
\newtheorem{prop}[thm]{Proposition}
\newtheorem{cor}[thm]{Corollary}
\theoremstyle{definition}
\newtheorem{defn}[thm]{Definition}
\theoremstyle{remark}
\newtheorem{exmp}[thm]{Example}
\begin{document}

\title
[Regularity
of spaces with an upper curvature bound]
{Topological regularity of spaces \\
with an upper curvature bound}

\author
[A. Lytchak]{Alexander Lytchak}
\author
[K. Nagano]{Koichi Nagano}

\thanks{The first author was partially supported by the DFG grants   SFB TRR 191 and SPP 2026.
The second author was partially supported
by JSPS KAKENHI Grant Numbers 26610012, 21740036, 18740023,
and by the 2004 JSPS Postdoctoral Fellowships for Research Abroad.}

\email
[Alexander Lytchak]
{alytchak@math.uni-koeln.de}
\email
[Koichi Nagano]
{nagano@math.tsukuba.ac.jp}

\address
[Alexander Lytchak]
{\endgraf
Mathematisches Institut, Universit\"{a}t K\"oln
\endgraf
Weyertal 86-90, D-50937 K\"oln, Germany}
\address
[Koichi Nagano]
{\endgraf
Institute of Mathematics, University of Tsukuba
\endgraf
Tennodai 1-1-1, Tsukuba, Ibaraki, 305-8571, Japan}


\keywords
{Curvature bounds,  regularity, homology manifold}
\subjclass
[2010]{53C20, 53C21, 53C23}


\begin{abstract}
We   prove  that a locally compact space with an upper curvature bound is a topological manifold
if and only if all of its spaces of directions are homotopy equivalent and not contractible.
We discuss applications to homology manifolds,  limits of Riemannian manifolds and deduce a sphere theorem.
\end{abstract}

\maketitle
\section{Introduction}
\subsection{Main results}
 We prove the following
\begin{thm}\label{thm: topreg}
Let $X$ be a  connected, locally compact metric space  with an upper curvature bound.
Then
the following are equivalent:
\begin{enumerate}
\item
$X$ is a topological manifold.
\item
All tangent spaces $T_pX$ of $X$ are  homeomorphic  to the same space $T$, and $T$ is of finite topological dimension.
\item
All spaces of directions $\Sigma _pX$ are homotopy equivalent to the same space $\Sigma$, and $\Sigma $ is  non-contractible.
\end{enumerate}
\end{thm}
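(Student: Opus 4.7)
The plan is to prove the cycle $(1)\Rightarrow(2)\Rightarrow(3)\Rightarrow(1)$, using the standard $\CAT(\kappa)$ dictionary between local topology at $p$ and the tangent cone $T_pX$: sufficiently small metric balls about $p$ are homeomorphic to apex-neighborhoods in $T_pX$, and these are in turn homotopy-modeled by the space of directions $\Sigma_pX$. I would first fix this local cone description (either quoted from a preliminary section of the paper or derived via the usual geodesic contraction to $p$).

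For $(1)\Rightarrow(2)$: if $X$ is a topological $n$-manifold at $p$, then a small ball about $p$ is homeomorphic to $\R^n$, so a neighborhood of the apex in $T_pX$ is homeomorphic to $\R^n$. Using the scaling symmetry of the cone, this local Euclidean structure extends to all of $T_pX$, forcing $T_pX\cong\R^n$ and $\Sigma_pX\cong\Sph^{n-1}$. Hence (2) holds with $T=\R^n$, of topological dimension $n$.

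For $(2)\Rightarrow(3)$: the space of directions is the topological link of the apex of the $\CAT(0)$ cone $T_pX$, and the apex is intrinsically distinguished (e.g. as the unique point at which the tangent cone is all of $T$), so homeomorphic $T_pX$'s yield homeomorphic links, and in particular a common homotopy type $\Sigma$. If $\Sigma$ were contractible, then $T$ would deformation retract onto the apex and the local homology of $X$ would vanish at every $p$; via the cone model this forces vanishing local topological dimension, and connectedness reduces $X$ to a single point with $\Sigma=\emptyset$, which is non-contractible by convention. Hence (3) holds in every case.

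The heart of the proof is $(3)\Rightarrow(1)$. From the local cone model, constancy of the homotopy type of $\Sigma_pX$ transfers to constancy of local homology, and non-contractibility of $\Sigma$ picks out a top non-vanishing degree $n-1$, which I would identify with the local topological dimension of $X$. This makes $X$ a generalized homology $n$-manifold with $\Sph^{n-1}$-local homology at every point. To promote homology manifold to topological manifold I would induct on dimension: each $\Sigma_pX$ is itself a compact $\CAT(1)$ space whose own spaces of directions inherit an analogous constancy, so by induction $\Sigma_pX$ is a homotopy $(n-1)$-sphere. Then for $n\geq 5$ I invoke the Edwards/Cannon/Quinn recognition theorem after verifying the disjoint disks property from the abundance of geodesic branching, while $n\leq 4$ is handled by surface classification, Poincar\'e, and Freedman-type recognition. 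The main obstacles I expect are (a) verifying DDP in the $\CAT(\kappa)$ setting, and (b) the inductive step upgrading homology-sphericity of $\Sigma_pX$ to genuine homotopy-sphericity; both seem to require systematic exploitation of the iterative cone structure of $\CAT(\kappa)$ spaces and likely the authors' prior technical work on homology manifolds with curvature bounded above.
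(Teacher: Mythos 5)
Your proposal rests on a ``standard $\CAT(\kappa)$ dictionary'' asserting that small metric balls about $p$ are homeomorphic to apex-neighborhoods of $T_pX$.  No such dictionary exists for upper curvature bounds: the tangent cone is only a pointed Gromov--Hausdorff limit of rescaled balls, and there is no Perelman-type topological stability theorem in this setting (the paper emphasizes this repeatedly, and the double-suspension example in Section~2 makes the failure concrete).  Your deduction of $(1)\Rightarrow(2)$ then collapses, and in fact your conclusion that $\Sigma_pX\cong\Sph^{n-1}$ for a topological $n$-manifold $X$ is literally false when $n\geq 5$: a $\CAT(1)$ double suspension of a non-simply-connected homology sphere is homeomorphic to $\Sph^n$, yet some $\Sigma_pX$ are suspensions of that homology sphere, hence not homeomorphic to $\Sph^{n-1}$.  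The paper instead obtains $(1)\Rightarrow(2)$ from a Begle-type homology stability argument showing $T_pX$ is a homology manifold, then upgrades the cone $T_pX$ to a topological manifold using the Quinn-type discreteness result together with a theorem of Bestvina--Daverman/Cannon--Bryant--Lacher, and only then concludes $T_pX\cong\R^n$.  Similarly, in $(2)\Rightarrow(3)$ the claim that ``the apex is intrinsically distinguished'' fails already for $T=\R^n$; the paper avoids this by restricting a homeomorphism $T_x\to T_y$ to a large sphere, projecting to $\Sigma_y$, and exploiting the cone structure on both sides to show the resulting map is a weak (hence genuine, by the ANR property) homotopy equivalence, without requiring the apex to go to the apex.

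For $(3)\Rightarrow(1)$ the skeleton you sketch (homology manifold $\Rightarrow$ manifold via recognition theorems) is the right target, but two of your intermediate steps do not hold.  The induction ``each $\Sigma_pX$ is a compact $\CAT(1)$ space whose own spaces of directions inherit an analogous constancy'' is unjustified: hypothesis~(3) is about $X$ alone and is not inherited by $\Sigma_pX$ (indeed the double-suspension example shows iterated spaces of directions of manifolds can fail to be of constant homotopy type).  The paper sidesteps this by quoting the result from \cite{LN} that every GCBA space has a point with space of directions isometric to a round sphere, immediately pinning $\Sigma\simeq\Sph^{n-1}$.  And verifying the disjoint disk property from ``abundance of geodesic branching'' is exactly where the hard work lives; the paper does not attempt DDP directly on $X$ but instead runs a delicate induction on fibers of strainer maps (Theorem~\ref{thm: maintech}), using Hurewicz fibration and fiber-bundle recognition results, first showing the non-manifold set is discrete (Theorem~\ref{thm: quinn}) and then invoking Theorem~\ref{thm: bcl} to remove it.  Without the strainer-map machinery, or some equivalent substitute, your outline cannot reach DDP.
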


Theorem \ref{thm: topreg} answers the folklore question about the infinitesimal characterization of  topological manifolds among spaces with upper curvature bounds, compare \cite{Berest6}.
It implies an affirmative answer to a question of 
 F. Quinn,  \cite[Problem 7.2]{quinn-problem}:
\begin{thm} \label{thm: quinn}
	Let $X$ be a  metric space  with an upper curvature bound.  If $X$ is a homology manifold
	then  there exists a locally finite subset $E$ of $X$ such that $X\setminus E$ is a topological manifold.
\end{thm}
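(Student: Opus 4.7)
Let $n$ be the dimension in which $X$ is a homology manifold. The cases $n\leq 1$ are immediate, so assume $n\geq 2$. Set
\[
E := \{p\in X : \text{no neighborhood of }p\text{ is a topological manifold}\}.
\]
Then $E$ is closed and $X\setminus E$ is automatically a topological manifold, so the task reduces to showing that $E$ is discrete; local compactness of $X$ will then make $E$ locally finite. The plan is to apply Theorem~\ref{thm: topreg} on small punctured balls centered at points of $E$.

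A first observation is that every $\Sigma_pX$ is non-contractible. Inside a sufficiently small CAT ball, geodesic retraction yields $B_r(p)\setminus\{p\}\simeq \Sigma_pX$; combined with the contractibility of $B_r(p)$, excision, and the homology manifold hypothesis $H_*(X,X\setminus\{p\})\cong H_*(\R^n,\R^n\setminus 0)$, this identifies $\Sigma_pX$ with an integral homology $(n-1)$-sphere. In particular $\Sigma_pX$ is non-contractible, and is connected since $n\geq 2$, so $B_r(p)\setminus\{p\}$ is also connected for small $r$.

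The heart of the argument is the following local claim: for each $p\in X$ there exists $r>0$ such that all spaces of directions $\Sigma_qX$ at points $q\in B_r(p)\setminus\{p\}$ share a single homotopy type. Granted this, the connected open set $B_r(p)\setminus\{p\}$ satisfies condition~(3) of Theorem~\ref{thm: topreg} (non-contractibility is provided by the first observation), hence is a topological manifold. Therefore $E\cap B_r(p)\subseteq\{p\}$, so $E$ is discrete, completing the proof.

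The main obstacle is this local constancy of the homotopy type of $\Sigma_qX$ as $q$ varies in a punctured neighborhood of $p$. My approach is to exploit the CAT approximation of $B_r(p)$ by the Euclidean tangent cone $T_pX=C(\Sigma_pX)$, together with a stability result for spaces of directions under Gromov--Hausdorff convergence of rescalings. For $q$ close to $p$ in direction $\xi\in\Sigma_pX$, these tools should identify $\Sigma_qX$, up to homotopy, with the space of directions at the corresponding non-vertex point of the cone, which is a spherical suspension of $\Sigma_\xi\Sigma_pX$. Combined with path-connectedness of $B_r(p)\setminus\{p\}$ and a local-constancy argument along paths (using the CAT(1) geometry of $\Sigma_pX$ and again the homology $(n-1)$-sphere condition), this should deliver a common homotopy type and close the argument.
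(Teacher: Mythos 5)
Your reduction to showing $E$ is discrete is sound, and your two preliminary observations (non-contractibility of $\Sigma_pX$ and connectedness of small punctured balls) are correct and match Lemma~\ref{lem: homchar} and Lemma~\ref{prop: whe}. But the argument stalls at two points, both serious. First, the proposal is circular: you invoke implication (3)$\Rightarrow$(1) of Theorem~\ref{thm: topreg} on the punctured ball $B_r^*(p)$, but the paper proves that very implication in dimensions $\ge 4$ \emph{as a consequence of} Theorem~\ref{thm: quinn} combined with Theorem~\ref{thm: bcl}. Both Theorem~\ref{thm: topreg}(3)$\Rightarrow$(1) and Theorem~\ref{thm: quinn} are deduced from the main technical result Theorem~\ref{thm: maintech} about strainer maps; your plan bypasses that machinery entirely and therefore has nothing to stand on. The paper's proof of Theorem~\ref{thm: quinn} is in fact very short once Theorem~\ref{thm: maintech} is available: cover $X$ by tiny balls $O$, take the trivial strainer map $F:O\to\R^0$ (the case $k=n$), and observe that the set of points in the fiber $\Pi=O$ where $F$ cannot be locally extended to a $(1,12\delta)$-strainer map is finite, while the complement is a topological $n$-manifold.

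Second, the key local-constancy claim -- that the homotopy type of $\Sigma_qX$ is constant on a small punctured ball $B_r^*(p)$ -- is true (it is an easy consequence of the theorem via Lemma~\ref{lem: 1to3}), but your proposed route to it does not work. Convergence of the rescaled balls $r^{-1}B_r(p)$ to $T_pX$ is a Gromov--Hausdorff statement at scale $\approx d(p,q)$, whereas the space of directions $\Sigma_qX$ is an infinitesimal invariant of $X$ at $q$, governed by arbitrarily small scales $\epsilon\ll d(p,q)$. The two scales are incommensurate, so the blow-up to $T_pX$ carries no information about $\Sigma_qX$; the stability statement Proposition~\ref{prop: sph}(2) applies only to distance spheres $S_t$ at a \emph{fixed} scale $t<r_q$, and the radius $r_q$ is not controlled uniformly as $q\to p$. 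Obtaining exactly this kind of uniform local control is what the strainer-map machinery (Theorem~\ref{thm: fibration}, Corollary~\ref{lem: admiss}, and the long induction in Theorem~\ref{thm: maintech} via DDP and Hurewicz fibrations) is designed to deliver; and indeed the stability of spaces of directions under convergence appears in the paper only later (Section~\ref{sec: limit}), proved there using Theorem~\ref{thm: topreg} as input, not the other way around. Without some substitute for the strainer-map machinery, the local-constancy claim remains an assertion, not a lemma.
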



We refer the reader  to  \cite{Mio} and to Subsection \ref{subsec: homman} below for  basics on  homology manifolds
and to Section \ref{sec: topreg} for a  stronger  result.

 If $X$ in Theorem \ref{thm: topreg}
 is a topological  manifold of dimension $n$   then all tangent spaces $T_pX$ turn out to be  homeomorphic to $\R^n$ and all
spaces of directions turn out to be  homotopy equivalent to $\mathbb S^{n-1}$.  

  For $n\geq 5$,  the spaces of directions may not all be homeomorphic to $\mathbb S^{n-1}$,
 \cite{Berest3}, as a consequence of the double suspension theorem of R. Edward  \cite{E2}, \cite{Cannon}.  However,
 for $n\leq 4$, all spaces of directions  $\Sigma _p X$ are homeomorphic to $\Sph ^{n-1}$, see Theorem \ref{thm: dim3}. This 
answers  a question  of V.  Berestovskii \cite[Problem 1]{Berest5}.






We  deduce the following topological stability  theorem:

\begin{thm}\label{thm: riemlimreg}
For  $\kappa \in \R$ and $r > 0$,
let a sequence of
complete  $n$-dimensional Riemannian manifolds $M_i$
with sectional curvature $\le \kappa$
and injectivity radius $\ge r$ converge   in the pointed Gromov--Hausdorff topology to a locally compact space $X$.  Then
 $X$ is a topological manifold
  and any space of directions $\Sigma _x X$
of $X$ is homeomorphic to $\Sph ^{n-1}$.

 Moreover, if $X$ is compact then $M_i$ is homeomorphic to $X$, for all $i$ large enough.
\end{thm}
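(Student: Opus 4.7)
The plan is to verify the hypotheses of Theorem~\ref{thm: topreg} for $X$, and separately to upgrade Gromov--Hausdorff approximations to homeomorphisms in the compact case. The upper curvature bound on $X$ is the easy input: each ball of radius $<r$ in $M_i$ is $\CAT(\kappa)$ by Toponogov comparison inside the injectivity ball, and the $\CAT(\kappa)$ condition is stable under pointed Gromov--Hausdorff convergence of balls, so $X$ is locally $\CAT(\kappa)$ in the sense used in the paper.

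The substantive step is to show that every space of directions $\Sigma_xX$ is homotopy equivalent to $\Sph^{n-1}$. I would argue via uniform non-collapsing: G\"unther's inequality under $\sec\le\kappa$ gives a lower bound $\operatorname{vol}(B_s(p))\ge v_\kappa(s)$ for every $p\in M_i$ and $s\le r$, where $v_\kappa(s)$ is the volume of the $s$-ball in the $n$-dimensional model space of curvature $\kappa$. This descends to the limit and forces $X$ to have pure Hausdorff dimension $n$ at every point. In a $\CAT(\kappa)$ space, $\Sigma_xX$ is homotopy equivalent to the metric sphere $\partial B_s(x)$ for every sufficiently small $s$ via the geodesic logarithm map; since $\partial B_s(x_i)=\Sph^{n-1}$ in $M_i$, and the homotopy type of such metric spheres is preserved under non-collapsing $\CAT(\kappa)$ Gromov--Hausdorff convergence, we conclude $\Sigma_xX\simeq\Sph^{n-1}$. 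Theorem~\ref{thm: topreg} then yields that $X$ is a topological $n$-manifold. The stronger assertion $\Sigma_xX\cong\Sph^{n-1}$ follows by identifying the tangent cone $T_xX$ with $\R^n$ via a diagonal rescaling argument: the blow-ups $(\lambda_iM_i,x_i)$ with $\lambda_i\to\infty$ (chosen to track the GH-convergence to $T_xX$) have $\sec\le\kappa/\lambda_i^2\to 0$, injectivity radius $\to\infty$, and uniform non-collapsing, so the $\CAT(0)$ cone $T_xX$ is $n$-dimensional, and a Bishop-type rigidity forces it to be Euclidean; its unit sphere is then $\Sigma_xX$.

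For the compact case, both $M_i$ and $X$ are topological $n$-manifolds whose local structure is uniformly controlled: small balls in $X$ are contractible $\ANR$s, a byproduct of the proof of Theorem~\ref{thm: topreg}. Combined with the fact that the Gromov--Hausdorff approximations $f_i\colon M_i\to X$ are near-isometric, one can straighten each $f_i$ to a cell-like surjection (for instance by averaging along short geodesics within the $\CAT(\kappa)$ structure of $X$) and then invoke the Siebenmann--Chapman--Ferry approximation theorems to promote cell-like maps between topological manifolds to arbitrarily close homeomorphisms. This yields $M_i\cong X$ for $i$ large.

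The principal obstacle I foresee is in this last step: producing a genuinely cell-like straightening, rather than just a near-isometric $\epsilon$-approximation, requires careful control of point inverses using the full $\CAT(\kappa)$ and $\ANR$ structure of $X$, and is where the bulk of the topological argument lies. The tangent-cone identification in the second paragraph is the other delicate point, as it requires interchanging the limit $i\to\infty$ with the infinitesimal rescaling at $x$ and a sharp Bishop rigidity statement for $\CAT(0)$ cones arising as non-collapsed Riemannian limits.
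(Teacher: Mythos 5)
Your first paragraph and the manifold part of the second paragraph are essentially the paper's argument for Theorem \ref{thm: part1}: the limit $X$ is GCBA, and the homotopy type of small metric spheres in $M_i$ (which are round spheres because of the injectivity radius bound) passes to the limit by the homotopy stability statement Proposition \ref{prop: sph}, so $\Sigma_x X\simeq\Sph^{n-1}$ and Theorem \ref{thm: topreg} applies. The G\"unther volume bound is an unnecessary detour but not wrong. The compact case via Petersen's stability theorem and the $\alpha$-approximation theorem is also what the paper does, except that Theorem \ref{thm: alpha} is applied directly to the $\epsilon$-equivalence furnished by Theorem \ref{thm: petersen}; you do not need, and should not try, to ``straighten'' the GH-approximation to a cell-like surjection, which would be a substantially harder task with no payoff.

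The genuine gap is in your argument that $\Sigma_xX$ is \emph{homeomorphic} to $\Sph^{n-1}$. You blow up at $x$ and claim a ``Bishop-type rigidity'' forces the $\CAT(0)$ tangent cone $T_xX$ to be isometric to $\R^n$, whence $\Sigma_xX\cong\Sph^{n-1}$. This fails for two reasons. First, Bishop rigidity requires a volume \emph{upper} bound meeting the Euclidean value; under $\sec\le\kappa$ only, G\"unther gives a lower bound, while a $\CAT(0)$ cone that is geodesically complete and $n$-dimensional can have strictly larger-than-Euclidean volume (e.g.\ the cone over a circle of length $>2\pi$ in dimension $2$), so there is no rigidity statement to invoke. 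Second, even the conclusion you want is not that $T_xX$ is isometric to $\R^n$; the paper never asserts this and it is false in general. The actual argument (the unnamed theorem in Section \ref{sec: limit}) is considerably more delicate: one reduces, by Lemma \ref{lem: limits} and Lemma \ref{cor: limoflim}, to showing that every $k$-th iterated space of directions is homotopy equivalent to $\Sph^{n-k}$; realizes such iterated links as blow-up limits $\R^{k-1}\times C\Sigma^k$ of the $M_i$; builds strainer maps $F_i$ on the approximating Riemannian manifolds and uses that $F_i$ is smooth, that its fibers $\Pi_i$ are smooth submanifolds, and that the distance function $g_i$ restricted to $\Pi_i$ has no critical points near $p_i$ (so its level sets are diffeomorphic $(n-k)$-spheres); establishes a uniform contractibility function for these level spheres; and finally passes to the limit using Petersen's Theorem \ref{thm: petersen} again. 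None of that is captured by the isometric rigidity you propose, and I do not see how to salvage your route without essentially redoing the paper's Morse-theoretic argument on the fibers of the strainer maps.
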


In particular, the  double suspension of a non-simply connected homology sphere, Example \ref{exmp: double},  is not a limit of $\CAT(\kappa)$ Riemannian manifolds, proving the conjecture formulated in  \cite{Berest3}.   

\subsection{Analogies and differences}
For spaces with lower curvature bounds the analogs of Theorem \ref{thm: topreg}  and the  stability part of Theorem \ref{thm: riemlimreg} are  special cases of the fundamental
topological stability theorem of G. Perelman, \cite{P2}, \cite{Kap}. Moreover, 
Theorem \ref{thm: quinn} for Alexandrov spaces is a direct consequence
of  Perelman's stability theorem, as observed in \cite{jywu}.
The analog of the additional statement in Theorem \ref{thm: riemlimreg}  about the homeomorphism type of the spaces of directions
(see also Theorem \ref{thm: part1} below, for a more general statement)  has been proved  for Alexandrov spaces by V. Kapovitch in \cite{Kap-reg}.

However, for spaces with an upper curvature bound  there is no analog
of the stability theorem,  even for  finite graphs. Moreover, already in dimension $2$, locally compact, geodesically complete spaces
with an upper curvature bound do not need to admit a topological triangulation, as has been observed by  B.  Kleiner, \cite{Kleiner}. Thus, unlike their analogs for Alexandrov spaces, our results are not special cases of much more general statements. 

 On the other hand, our approach requires less geometric control and should be applicable beyond our setting. For instance, it might simplify Perelman's stability theorem for Alexandrov spaces. 

As in  Perelman's topological theory  of Alexandrov spaces, a major role in our topological results play the so-called \emph{strainer maps}  investigated in \cite{LN}.
Perelman has proved in \cite{P2}, that in the realm of Alexandrov spaces strainer maps are local fiber bundles.
Similarly to the failure of topological stability, the  example  in \cite{Kleiner}  demonstrates  that in spaces with upper curvature bounds the local fiber bundle structure can not be expected. 
Nevertheless, from the homotopy point of view,
strainer  maps behave  well and turn out to be (local) \emph{Hurewicz fibrations}. This result,   Theorem \ref{thm: fibration}, is deduced from general topological statements
and the local contractibility of fibers of strainer maps   obtained in \cite{LN}.  Theorem \ref{thm: fibration}  might be useful in further investigations of spaces with upper curvature bounds and beyond.


We further mention, that the main theorems of  \cite{GPW}, (\cite{GPW-erratum}), \cite{Ferryfinite}  
imply  (in a more general situation) the finiteness of topological types of manifolds
in the sequence appearing in  the final statement of Theorem \ref{thm: riemlimreg}. However, no conclusion about the limit space itself can be 
deduced in the generality of \cite{GPW}, \cite{Ferryfinite} besides the fact that the limit space is a homology manifold.

Finally,  Theorems \ref{thm: topreg}, \ref{thm: quinn} in dimensions $\leq 3$ and some related insights in dimension $4$ are due to P. Thurston,
\cite{thurston}.

\subsection{Two applications}
In order to state  yet another manifold characterization 
we recall, \cite{LN},  that a space $X$ with an upper curvature bound is   \emph{locally geodesically complete} if any local geodesic $\gamma :[a,b]\to X$ can be extended as a local geodesic to some larger interval $[a-\epsilon, a+\epsilon]$.
 All homology manifolds, thus  all spaces appearing in the previous theorems, are always  locally geodesically complete, Lemma \ref{lem: busemann}.

\begin{thm} \label{thm: newchar}
Let $X$ be a  connected, locally compact space which has an upper curvature bound and is locally  geodesically complete.  If $X$ is not a topological manifold then it contains an isometrically embedded compact
metric tree different from an interval.
\end{thm}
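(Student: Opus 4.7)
The plan is to locate a point $q \in X$ at which three directions in $\Sigma_q X$ lie at pairwise distance $\pi$. Granted such $u_1, u_2, u_3$, local geodesic completeness furnishes unit-speed geodesics $\gamma_i \colon [0,\epsilon] \to X$ issuing from $q$ with these initial directions, and $\angle(u_i, u_j) = \pi$ ensures that each concatenation $\gamma_i \cup \gamma_j$ is a local geodesic through $q$. The $\CAT(\kappa)$ condition then promotes this local geodesic to a genuine geodesic for $\epsilon$ sufficiently small, so $d(\gamma_i(s), \gamma_j(t)) = s + t$ whenever $i \ne j$, and the union of the three arcs is an isometrically embedded tripod---a compact metric tree distinct from an interval.

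To produce such $q$, assume $X$ is not a topological manifold and apply Theorem~\ref{thm: topreg}: either some $\Sigma_p X$ is contractible, or the $\Sigma_p X$ are not all of the same homotopy type. In the first case $\Sigma_p X$ is manifestly not homotopy equivalent to any sphere $\Sph^k$. In the second case, the local fibration structure of strainer maps (Theorem~\ref{thm: fibration}) implies that the locus $\{p \in X : \Sigma_p X \simeq \Sph^k\}$ is open for each $k$; connectedness of $X$ together with the failure of condition~(3) of Theorem~\ref{thm: topreg} then forces some $p$ to have $\Sigma_p X$ not homotopy equivalent to any sphere. In either case, local geodesic completeness of $X$ is inherited by $\Sigma_p X$, which is therefore a compact, geodesically complete $\CAT(1)$ space.

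The geometric core is the following claim, to be proved by induction on dimension: any geodesically complete $\CAT(1)$ space $\Sigma$ not homotopy equivalent to a sphere contains three points at pairwise distance $\pi$. The low-dimensional base case reduces to finite metric graphs with no free vertices, where a vertex of degree at least three (absent in $\Sph^0$ and $\Sph^1$) furnishes three $\pi$-distant endpoints. For the inductive step, if every direction in $\Sigma$ has a unique antipode then the structural theory of \cite{LN} combined with Theorem~\ref{thm: fibration} forces $\Sigma$ to be a topological sphere, contradicting the hypothesis; otherwise some $v \in \Sigma$ has two antipodes, and applying induction to $\Sigma_v \Sigma$ followed by geodesic extension in $\Sigma$ yields the required triple. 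Applying the claim to the non-spherical $\Sigma_p X$ above and then running the tripod construction of the first paragraph completes the proof.

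The main obstacle is the inductive step---specifically, the implication that unique antipodes everywhere force $\Sigma$ to be a sphere---which draws on the full strainer-map technology of \cite{LN} and is the geometric heart of Theorem~\ref{thm: newchar}.
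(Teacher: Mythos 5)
Your proposal runs the implication in the forward direction (``$X$ not a manifold $\Rightarrow$ find a tripod'') and in doing so introduces two gaps that the paper's contrapositive argument avoids entirely. First, to locate a point $p$ with $\Sigma_p X$ not homotopy equivalent to any sphere, you assert that $\{p \in X : \Sigma_p X \simeq \Sph^k\}$ is open, citing Theorem~\ref{thm: fibration}. This openness is not proved in the paper and is not an obvious consequence of the local fibration structure of strainer maps; without it, the case in which all spaces of directions are homotopy equivalent to spheres of \emph{varying} dimension (condition~(3) failing because the homotopy type is not constant) is not excluded, and you are left with no ``bad'' point to run the tripod construction. Second, your ``geometric core'' claim --- that a compact geodesically complete $\CAT(1)$ space not homotopy equivalent to a sphere contains a triple at pairwise distance $\pi$ --- is, up to replacing ``homeomorphic'' by ``homotopy equivalent,'' exactly Theorem~\ref{thm: sphere}, which is proved earlier in the same section and should simply be cited. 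Instead you sketch an induction on antipodes whose central step (``unique antipodes force a sphere'') you explicitly leave unproven; this is not a missing technicality but the entire content of the sphere theorem.

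The paper's proof is short precisely because it is the contrapositive. Assume $X$ contains no embedded tree other than an interval. Then no $\Sigma_x$ can contain three points at pairwise distance $\geq \pi$, since such a triple together with short geodesics issuing in those directions would produce a tripod (your first paragraph, which is correct). Theorem~\ref{thm: sphere} now says each $\Sigma_x$ is \emph{homeomorphic} to some sphere; because a homeomorphism preserves dimension, each $\Sigma_x$ is pure-dimensional, so Proposition~\ref{prop: pure} forces $X$ to be purely $n$-dimensional and hence every $\Sigma_x \cong \Sph^{n-1}$. Condition~(3) of Theorem~\ref{thm: topreg} is then verified and $X$ is a manifold. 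This both dispenses with the unproven openness claim (the homeomorphism conclusion, not just homotopy equivalence, is what makes the dimensions match) and uses Theorem~\ref{thm: sphere} as a black box instead of re-deriving it.
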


 Theorem \ref{thm: newchar}  states that a non-manifold must have geodesics
which branch at an angle at least $\pi$. It can be seen as a soft version of the following much more special
and rigid result. If a connected, locally compact space $X$  with an upper curvature bound  is locally  geodesically complete  and has no branching geodesics then $X$ is a  smooth manifold whose distance
is defined by a continuous Riemannian metric   $g$  (with some additional properties), \cite{bermanifold}, \cite[Theorem 1.3]{LN}.

Theorem \ref{thm: newchar} is a  consequence of  Theorem  \ref{thm: topreg} and the following
sphere theorem.
\begin{thm} \label{thm: sphere}
Let $\Sigma $ be a compact,  locally geodesically complete  space with curvature bounded from above by $1$.
If the  injectivity radius of $\Sigma $ is at least $\pi$ and $\Sigma $ does not contain a triple of points with pairwise distances at least $\pi$ then $\Sigma $ is homeomorphic to a sphere.
\end{thm}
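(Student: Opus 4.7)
The plan is to cover $\Sigma$ by two contractible open balls using the spread hypothesis, show $\Sigma$ is a topological manifold by induction on dimension via Theorem~\ref{thm: topreg}, and then identify $\Sigma$ as a sphere through the resulting suspension structure and the topological Poincar\'e conjecture.

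First, I would establish $\diam(\Sigma) \geq \pi$ whenever $\Sigma$ is not a single point: local geodesic completeness together with injectivity radius $\geq \pi$ allows every direction at a point $p$ to be extended to a length-$\pi$ minimising geodesic, producing a point at distance exactly $\pi$ from $p$. Picking $p, p' \in \Sigma$ with $d(p, p') \geq \pi$, any point $q$ with $d(p, q) \geq \pi$ and $d(p', q) \geq \pi$ would form a forbidden triple together with $p, p'$; hence
\[
\Sigma \;=\; B(p, \pi) \,\cup\, B(p', \pi).
\]
Each open ball is contractible by radial contraction to its centre, which is well-defined because the $\CAT(1)$ property together with injectivity radius $\geq \pi$ supplies a unique and continuously-varying geodesic from every point of the ball to the centre.

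To conclude that $\Sigma$ is a topological manifold I would induct on $n = \dim \Sigma$, applying Theorem~\ref{thm: topreg}. At any $q \in \Sigma$ the space of directions $\Sigma_q$ inherits compactness, the $\CAT(1)$ property, local geodesic completeness, and injectivity radius $\geq \pi$ from the ambient space. The delicate inductive check is the spread hypothesis on $\Sigma_q$: three directions at $q$ pairwise at angle $\pi$ would, by $\CAT(1)$ comparison, force the three length-$\pi$ geodesics from $q$ in these directions to terminate at a common endpoint $q'$; a finer comparison at intermediate points should then produce a configuration inconsistent with the ambient injectivity radius. The inductive hypothesis gives $\Sigma_q \cong \Sph^{n-1}$ for every $q$; in particular all $\Sigma_q$ are pairwise homotopy equivalent and non-contractible, and Theorem~\ref{thm: topreg} identifies $\Sigma$ as a topological manifold.

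Because $B(p, \pi)$ and $B(p', \pi)$ are contractible and open, the cover exhibits $\Sigma$ as homotopy equivalent to the suspension of $B(p, \pi) \cap B(p', \pi)$. Hence $\Sigma$ is a closed $n$-manifold homotopy equivalent to a suspension, so in particular simply connected with vanishing cup products in positive degree; Poincar\'e duality then forces its integral homology to agree with that of $\Sph^n$, and the topological Poincar\'e conjecture in all dimensions (Smale, Freedman, Perelman) yields $\Sigma \cong \Sph^n$. I expect the main obstacle to be the descent of the spread hypothesis to $\Sigma_q$ in the inductive step: this is the only place in the argument where one confronts the non-uniqueness phenomena that appear precisely at the boundary of the injectivity radius ball, and the $\CAT(1)$ comparison must be pushed to its sharpest form.
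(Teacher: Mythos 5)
Your overall strategy is the same as the paper's: induct on the dimension, descend the triple condition to spaces of directions so as to apply Theorem~\ref{thm: topreg}, and then cover $\Sigma$ by two contractible balls. But two steps of the induction are handled incorrectly or skipped.

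First, the descent of the spread hypothesis. You claim that three directions $v_1,v_2,v_3 \in \Sigma_q$ at pairwise angle $\pi$ would ``force the three length-$\pi$ geodesics from $q$ in these directions to terminate at a common endpoint,'' followed by an unspecified ``finer comparison.'' The common-endpoint claim is not justified by $\CAT(1)$ comparison, and the rest is too vague to be a proof. The correct and much simpler argument, which is what the paper does, is this: pick geodesics $\gamma_i$ issuing from $q$ in the directions $v_i$ (available by geodesic completeness). Since the pairwise angles are $\pi$, the concatenation of (the reversal of) $\gamma_i|_{[0,\pi/2]}$ with $\gamma_j|_{[0,\pi/2]}$ is a local geodesic of length $\pi$, hence a minimizing geodesic by the injectivity-radius hypothesis. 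Therefore the three points $x_i = \gamma_i(\pi/2)$ lie in $\Sigma$ at pairwise distance exactly $\pi$, contradicting the assumption on $\Sigma$ directly --- no finer comparison or endpoint coincidence is needed.

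Second, you assert that ``the inductive hypothesis gives $\Sigma_q \cong \Sph^{n-1}$ for every $q$.'' What the inductive hypothesis actually gives is that each $\Sigma_q$ is homeomorphic to \emph{some} sphere, whose dimension may a priori vary with $q$. To conclude that the dimension is $n-1$ uniformly you must first show that $\Sigma$ is purely $n$-dimensional; this is exactly the role of Proposition~\ref{prop: pure}, which you never invoke. Without this step you cannot verify hypothesis (3) of Theorem~\ref{thm: topreg}, since you need all spaces of directions to be homotopy equivalent to the \emph{same} non-contractible space. A smaller imprecision: covering $\Sigma$ by two open contractible sets does not make $\Sigma$ homotopy equivalent to the suspension of their intersection. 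What the cover does give is vanishing of all cup products in reduced cohomology, which, with Poincar\'e duality, van Kampen, Whitehead and the Poincar\'e conjecture, yields $\Sigma \cong \Sph^n$; this is the content of Lemma~\ref{lem: cat}, which you are in effect reproving.
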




Our Theorem \ref{thm: sphere}  has a well-known analog for spaces with lower curvature bounds,  due to K. Grove  and P. Petersen, later reproved by A. Petrunin:
An Alexandrov space of curvature at least $1$  and \emph{radius} larger than $\frac \pi 2$  is homeomorphic to a sphere,
 \cite{grovpete}, \cite{petrunin}.   In terms of the \emph{packing radii} investigated in  \cite{GM}, \cite{GW2},
 the assumption about the triple of points in $X$ reads as  $\mathrm{pack} _3 (X) < \frac \pi 2$.
 
  From Theorem \ref{thm: sphere} it is easy to deduce a volume sphere theorem, see
 Theorem \ref{thm: volumesph} below,  generalizing \cite{coghitok}, \cite{knagano3}.

\subsection{Structure of the paper}
After Preliminaries in Section \ref{sec: prel}, we study in Section \ref{sec: homo}
homology manifolds with upper curvature bounds and prove those parts of our  main theorems,
which do not rely on properties of strainer maps.  In Section \ref{sec: prel2} we recall several topological results
  relating fibrations, fiber bundles and local uniform contractibility.  In Section \ref{sec: strainer} we recall from \cite{LN}
  basic properties of strainer maps and apply results from Section \ref{sec: prel2} to deduce Theorem \ref{thm: fibration} discussed above.
In Section \ref{sec: topreg}, we apply the general topological statements inductively to strainer maps and prove
Theorems \ref{thm: topreg}, \ref{thm: quinn}.  In Section \ref{sec: limit} we discuss iterated spaces of directions 
and prove Theorem \ref{thm: riemlimreg} and its generalization.  In the final Section \ref{sec: last} we discuss basic properties of \emph{pure-dimensional spaces} and 
derive the proofs of Theorems \ref{thm: newchar}, \ref{thm: sphere}, \ref{thm: volumesph}.

\subsection*{Acknowledgments} The authors would like to express their gratitude to  Steve Ferry for providing an elegant proof of  the $5$-dimensional case of  Theorem \ref{thm: addendum}. We  thank
Valeriy Berestovskii and Anton Petrunin for helpful comments.

\section{Preliminaries} \label{sec: prel}
\subsection{Notations}
 We refer to 
\cite{ABN}, \cite{BH}, 
 \cite{BuyaloSchr}, \cite{AKP}, \cite{LN}
for the basics
on  upper curvature bounds
in the sense of Alexandrov. 

We will stick to the  following notations.   By $d$ we denote the distance functions on metric spaces.
For a point $p$ in a metric space $X$, we denote by $d_x:X\to \R$ the distance function to the point $x$.
By  $B_r (p)$ (respectively, by $\bar B_r (p)$)
we denote the open (respectively, closed) metric ball of radius $r$ around the point  $p$. By $B_r ^* (p)$ we will denote the punctured ball $B_r (p) \setminus \{p \}$.
By $S_r (p)$ we will denote the metric sphere of radius $r$ around the point $p$.
Geodesics will always be globally minimizing and parametrized by arclength.

 For two  maps $\varphi, \psi$ from a set $G$ into a  metric space $Y$, we set
\[
d(\varphi,\psi) := \sup_{x \in G} d \bigl( \varphi(x),\psi(x) \bigr).
\] 
The dimension of a metric space $X$ will always be the covering dimension and will be denoted by $\dim (X)$.

A metric space  $X$ has curvature bounded from above by $\kappa$
if every point of $X$ has a $\CAT(\kappa )$ neighborhood.  If $X$ is a space with an upper curvature bound and $x\in X$ a point,
 we denote by $\Sigma _x$ or by $\Sigma_xX$ the \emph{space of directions} at $x$ and by $T_x$ or by $T_x X$ the \emph{tangent cone} at $x$ of $X$,  which is canonically identified with the
Euclidean cone over $\Sigma _x$.

We  denote by $H_k (X,Y)$ the  $k$-th singular homology  with integer coefficients
of the pair $Y\subset X$ of topological spaces.

\subsection{Basic topological properties of spaces with upper curvature bounds}
Any space $X$ with an upper curvature  bound is an \emph{absolute neighborhood retract}, abbreviated as 
$\ANR$, \cite{Ontaneda}, \cite{Linus}.
In particular, $X$ is  homotopy equivalent to a simplicial complex. We have,
 \cite{Linus}:
\begin{lem}\label{prop: whe}
For any point $x$ in a space $X$ with an upper curvature bound, there exists some $r>0$
such that
for each $0 < s \le r$, the ball $B_s (p)$ is contractible and the punctured ball $B_s ^* (p)$
is homotopy equivalent to
the space of directions  $\Sigma_p $.
\end{lem}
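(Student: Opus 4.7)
The plan is as follows. Using the definition of an upper curvature bound, I would first fix $r>0$ so that $\bar B_r(p)$ is a complete $\CAT(\kappa)$ space (for some $\kappa\in\R$) in which every pair of points is joined by a unique unit-speed geodesic depending continuously on its endpoints, and in which all closed balls of radius at most $r$ centered in $\bar B_r(p)$ are geodesically convex. Such $r$ exists by the classical theory of $\CAT(\kappa)$ spaces.

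For the contractibility of $B_s(p)$ with $0<s\le r$, I would use the geodesic contraction $H\colon B_s(p)\times[0,1]\to B_s(p)$, $H(x,t)=\gamma_{px}((1-t)\,d(p,x))$, where $\gamma_{px}$ is the unique geodesic from $p$ to $x$. Convexity keeps the image inside $B_s(p)$, and continuous dependence of geodesics on endpoints yields continuity of $H$; so $B_s(p)$ deformation retracts to $\{p\}$.

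For the homotopy equivalence $B_s^*(p)\simeq\Sigma_p$, I would use the logarithm map $\log_p\colon B_s(p)\to T_pX$ defined by $\log_p(x)=d(p,x)\cdot\xi_x$, where $\xi_x\in\Sigma_p$ is the initial direction of $\gamma_{px}$. Angle comparison in $\CAT(\kappa)$ geometry makes $\log_p$ continuous; restricted to $B_s^*(p)$, it takes values in $T_pX\setminus\{o\}$, which radially deformation retracts onto $\Sigma_p\times\{1\}\cong\Sigma_p$. The composition $\pi\colon B_s^*(p)\to\Sigma_p$ is the candidate map. To obtain a homotopy inverse, I would send a direction $\xi$ coming from an actual geodesic germ to $\gamma_\xi(\epsilon)\in B_s^*(p)$ for a small fixed $\epsilon$, and then extend to all of $\Sigma_p$ using the $\ANR$ structure of both spaces together with Gromov--Hausdorff convergence of the rescaled punctured balls $(1/\lambda)B_\lambda^*(p)$ to $T_pX\setminus\{o\}$ as $\lambda\to 0$, combined with the scaling homotopy $\phi_\lambda(x)=\gamma_{px}(\lambda\,d(p,x))$ comparing $B_s^*(p)$ with its small rescaled sub-balls.

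The hard part will be this last step. The logarithm map is in general not surjective: a point of the completed $\Sigma_p$ need not correspond to an initial direction of an actual geodesic in $X$, so a pointwise inverse is unavailable. One must therefore build the inverse only up to homotopy, using uniform local contractibility of $X$ near $p$ (furnished by the $\ANR$ property) to continuously extend the partial section defined on geodesic germs to all of $\Sigma_p$. This is essentially the content of \cite{Linus}, which we are free to quote.
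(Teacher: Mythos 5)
The paper gives no proof of this lemma; it is imported wholesale from \cite{Linus}, and your last sentence correctly defers the hard step there as well. The geodesic contraction showing $B_s(p)$ is contractible is correct and is the standard argument.

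The sketch of the homotopy equivalence, however, glosses over two concrete obstructions, and they are exactly what the cited work has to deal with. First, the logarithm map $\pi\colon B_s^*(p)\to\Sigma_p$ is indeed continuous: the $\CAT(\kappa)$ comparison bounds the Alexandrov angle at $p$ from above by the Euclidean comparison angle, so initial directions converge when endpoints do. But the candidate inverse $\xi\mapsto\gamma_\xi(\epsilon)$ on geodesic directions has the opposite behaviour. The same comparison gives only the lower bound $d\bigl(\gamma_{\xi_1}(\epsilon),\gamma_{\xi_2}(\epsilon)\bigr)\ge 2\epsilon\sin\bigl(\tfrac12\angle(\xi_1,\xi_2)\bigr)$ (in the $\CAT(0)$ model; the other model spaces are analogous), and in general no upper bound: two geodesics from $p$ with small Alexandrov angle may have far-apart time-$\epsilon$ points. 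So the partial section on geodesic directions need not be uniformly continuous and there is no free extension to the completion $\Sigma_p$. Second, the lemma is stated for an arbitrary space with an upper curvature bound, with no local compactness hypothesis, so the rescaled punctured balls need not Gromov--Hausdorff converge to $T_pX\setminus\{o\}$ at all; that tool is simply unavailable at this level of generality. Both points are genuine gaps: ``extend using the $\ANR$ structure and GH convergence'' is a placeholder rather than an argument, and it is why the actual proof must run through the controlled-homotopy machinery of \cite{Linus} rather than through a pointwise inverse of the log map.
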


Due to \cite{Kleiner},
for any separable space $X$ with an upper curvature bound, we have
$$\dim (X) = 1+\sup _{x\in X} (\dim (\Sigma _x X))  =\sup _{x\in X} (\dim (T_x X))  \, .$$
Moreover, if  $\dim (X)$ is a finite number $n$, then there exists some $x\in X$
such that $H_{n-1} (\Sigma _x X)$ is not $0$.

\subsection{Tiny balls, GCBA spaces}
Let  $X$ be a locally compact space with an upper curvature bound $\kappa$. We will
say that a ball $B_r (x)$   is \emph{tiny} if  the closed ball with  radius $10\cdot r$ around $x$ is a compact $\CAT (\kappa )$ space and if
$100 \cdot r$ is smaller than the diameter of the
simply connected, complete  surface of constant curvature $\kappa$.

In  a tiny  ball all geodesics are 
determined by their endpoints. 

 A space $X$ with an upper curvature bound  is \emph{locally geodesically complete},
if every local geodesic defined on any compact interval can be extended as a local geodesic beyond its endpoints.
The following observation,  \cite[Theorem 1.5]{LS}, goes back to H. Busemann:
 \begin{lem} \label{lem: busemann}
 Let $X$ be a space with an upper curvature bound.
 If for any  $x\in X$ there exist arbitrary small $r$ such that the punctured ball $B^* _r (x)$ is non-contractible
 then $X$ is locally geodesically complete.
\end{lem}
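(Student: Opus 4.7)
My plan is to prove the contrapositive: assuming $X$ fails to be locally geodesically complete, I exhibit a point $p \in X$ all of whose sufficiently small punctured balls are contractible. Non-completeness means there is a local geodesic $\gamma \colon [-1, 0] \to X$ with $\gamma(0) = p$ that admits no extension to $[-1, \epsilon]$ for any $\epsilon > 0$. By shrinking $\gamma$ if necessary I may assume its image lies inside a $\CAT(\kappa)$ neighborhood of $p$ that is small enough to guarantee uniqueness of short geodesics between nearby points. In view of Lemma \ref{prop: whe}, it suffices to prove that $\Sigma_p$ is contractible.

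The core step is to translate non-extendability at $p$ into an infinitesimal condition on $\Sigma_p$. Let $v \in \Sigma_p$ be the direction at $p$ of the reversed germ $t \mapsto \gamma(-t)$. I claim that $\gamma$ extends past $p$ as a local geodesic if and only if $v$ admits an antipode $u \in \Sigma_p$, that is, a point with $d_{\Sigma_p}(u, v) = \pi$. The ``only if'' is immediate. For ``if,'' realize the direction $u$ by a short geodesic $\eta \colon [0, \epsilon] \to X$ starting at $p$ (which exists by definition of $\Sigma_p$); the angle-$\pi$ gluing lemma in $\CAT(\kappa)$ spaces then shows that the concatenation of $\gamma$ and $\eta$ is a local geodesic. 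Hence non-extendability forces $d_{\Sigma_p}(u, v) < \pi$ for every $u \in \Sigma_p$.

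Since $\Sigma_p$ is a $\CAT(1)$ space, any two of its points at distance strictly less than $\pi$ are joined by a unique geodesic depending continuously on the endpoints. Setting
\[
H(u, s) := \sigma_{u,v}\bigl(s \cdot d_{\Sigma_p}(u, v)\bigr),
\]
where $\sigma_{u,v}$ is the unit-speed geodesic from $u$ to $v$, gives a continuous homotopy from $\id_{\Sigma_p}$ to the constant map $v$; joint continuity is preserved even if $\Sigma_p$ is non-compact, because the lengths involved are pointwise strictly below $\pi$. Thus $\Sigma_p$ is contractible, and by Lemma \ref{prop: whe} so is $B_s^*(p)$ for all sufficiently small $s > 0$, contradicting the hypothesis.

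The main obstacle I expect is the equivalence in the second paragraph --- cleanly translating between the extrinsic, metric statement that $\gamma$ has no extension in $X$ and the intrinsic statement that $v$ has no antipode in $\Sigma_p$. This simultaneously uses local uniqueness of short geodesics in the $\CAT(\kappa)$ neighborhood and the angle-$\pi$ gluing property, both standard but to be combined carefully. Everything else reduces to routine $\CAT(1)$ geometry and the transfer of homotopy information from $\Sigma_p$ to $B_s^*(p)$ furnished by Lemma \ref{prop: whe}.
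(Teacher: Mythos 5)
The paper does not give its own proof of this lemma: it cites \cite[Theorem 1.5]{LS} and attributes the idea to Busemann. So I can only assess your argument on its own terms.

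Your overall strategy (contrapositive; show $\Sigma_p$ is contractible and transport via Lemma \ref{prop: whe}) is reasonable, the angle-$\pi$ gluing lemma is a genuine $\CAT(\kappa)$ fact, and the geodesic contraction of $\Sigma_p$ toward $v$ is the right idea \emph{if} every point of $\Sigma_p$ lies at distance strictly less than $\pi$ from $v$. The gap is in the parenthetical ``which exists by definition of $\Sigma_p$'': in the references the paper relies on (e.g.\ Bridson--Haefliger II.3), $\Sigma_p$ is defined as the \emph{metric completion} of the set of geodesic directions, so a point $u \in \Sigma_p$ with $d_{\Sigma_p}(u,v) = \pi$ need not be realized by any geodesic issuing from $p$, and the ``if'' half of your equivalence does not apply to it. Consequently your argument only shows $d_{\Sigma_p}(u,v) < \pi$ for geodesic directions $u$; it does not exclude an ideal direction $w$ with $d_{\Sigma_p}(w,v) = \pi$ arising as a limit of geodesic directions $u_n$ with $d_{\Sigma_p}(u_n,v) \to \pi$. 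At such a $w$ your homotopy $H(w,s)$ is not defined (in a $\CAT(1)$ space, points at distance exactly $\pi$ need not be joined by a unique geodesic, or any geodesic at all), so the claim that $H$ is a continuous contraction of all of $\Sigma_p$ is not established. Note also that the lemma makes no local compactness hypothesis, so one cannot easily rescue the argument by extracting limits of the approximating geodesics $\eta_n$.

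The cleanest repair sidesteps $\Sigma_p$ entirely and contracts the punctured ball directly, which I believe is the classical Busemann-type argument the paper has in mind. Fix a small $t_0 > 0$ and set $q := \gamma(-t_0)$, chosen so that $\bar B_r(p)$ lies in a $\CAT(\kappa)$ domain with unique geodesics and $q \in B_r(p)$. For $z \in B_r(p)$ let $\Phi_s(z)$ be the point at parameter $s$ on the geodesic from $z$ to $q$. By convexity of $d_p$ along geodesics, $\Phi_s(z) \in B_r(p)$ for all $s$; and if $z \neq p$ then $\Phi_s(z) \neq p$ for all $s$, since a geodesic from $z$ to $q$ passing through $p$ would concatenate with $\gamma|_{[-t_0,0]}$ to produce exactly the extension of $\gamma$ past $p$ that you have excluded. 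Hence $\Phi$ is a deformation retraction of $B_r^*(p)$ to the point $q$, so $B_r^*(p)$ is contractible for all small $r$, contradicting the hypothesis; contractibility of $\Sigma_p$ then follows from Lemma \ref{prop: whe} if one wants it.
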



Due to  the long exact sequence and the contractibility of small balls,
the \emph{local homology} $H_m(X,X \setminus \{x \} )$  at $x$ coincides with
$H_{m-1} (B_r ^{\ast} (x))$, for any $x$ in a space  $X$ with an upper curvature bound,
 any   small $r>0$ and any natural $m$.
 Thus, the non-vanishing of  $H_{\ast} (X,X\setminus \{x \})$, for all $x\in X$ implies, that $X$ is locally geodesically complete.

As a  \emph{GCBA space} we denote a  locally compact, separable metric space with an upper curvature bound, which is locally geodesically complete.
If $X$ is GCBA then so is any tangent space $T_x X$ and any space of directions $\Sigma _x X $. Moreover, any space of directions $\Sigma _x X$
is  compact and any tangent space $T_x X$ is the limit in the pointed Gromov--Hausdorff topology of  rescaled balls around $x$,
\cite{ABN}, \cite[Section 5]{LN}.


Every GCBA space $X$ has locally finite dimension, \cite[Theorem 1.1]{LN}, and
contains an open and dense  topological manifold (possibly of non-constant dimension),  \cite[Theorem 1.2]{LN}.
 Moreover, $X$ contains  a dense set of points with tangent spaces isometric to Euclidean spaces, possibly of different dimension,
 \cite[Theorem 1.3]{LN}.

The following result has been shown in \cite[Theorems 1.12, 13.1]{LN}:
\begin{prop} \label{prop: sph}
Let $x$ be a point in  a GCBA space $X$. Then there exists some $r_x>0$ such that for all $r<r_x$
the following hold true:
\begin{enumerate}
\item The metric sphere $S_r (x)$ is  homotopy equivalent to $\Sigma _x$.
\item  Let $B_{10 \cdot r} (x_i )$  be a  sequence of tiny  metric balls in GCBA spaces $X_i$ with the same upper curvature bound $\kappa$.
If $\bar B_{10 \cdot r} (x_i)$ converge to $\bar B_{10 \cdot r} (x)$ in the Gromov--Hausdorff topology then, for all $i$ large enough,
$S_r (x_i)$ is homotopy equivalent to $S_r (x)$.
\end{enumerate}
\end{prop}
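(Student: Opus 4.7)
The plan for part~(1) is to reduce everything to Lemma~\ref{prop: whe} by showing that for $r$ small enough the inclusion $S_r(x)\hookrightarrow B^{\ast}_{s}(x)$ is a homotopy equivalence, where $s$ is the radius provided by that lemma. To this end, I would construct a strong deformation retraction of $B^{\ast}_s(x)$ onto $S_r(x)$ that slides along local geodesics emanating from $x$: for $y$ with $d(x,y)\ge r$, move $y$ backward along the unique minimising geodesic $[xy]$ to the point at distance $r$ from $x$; for $y$ with $d(x,y)<r$, extend the geodesic from $x$ through $y$ past $y$ until it first reaches $S_r(x)$, using the local geodesic completeness of the GCBA space $X$. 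Uniqueness of local geodesics with a prescribed initial direction in a tiny $\CAT(\kappa)$ ball, together with the continuous dependence of geodesics on their endpoints, should ensure continuity of the flow across the interface $d(\cdot,x)=r$.

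The main obstacle in part~(1) is the extension step for points $y$ close to $x$. I would need a quantitative form of local geodesic completeness guaranteeing that every initial direction along geodesics through $x$ admits a geodesic extension of length at least, say, $3r$ that stays inside the tiny ball; such uniform extendability is part of the toolkit developed in \cite{LN} and dictates how small $r_x$ must be chosen.

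For part~(2) the plan is to combine part~(1) with a Gromov--Hausdorff stability principle of Petersen--Ferry type: a Gromov--Hausdorff convergent sequence of compact, uniformly locally contractible \textrm{ANR}s is eventually homotopy equivalent to the limit. I would apply this to the compact annuli $A_i:=\bar B_{2r}(x_i)\setminus B_{r/2}(x_i)$ around $x_i$, which converge in the Gromov--Hausdorff topology to the analogous annulus $A$ around $x$, obtaining $A_i\simeq A$ for large~$i$. Since the radial deformation retractions from part~(1) identify each $A_i$ with $S_r(x_i)$ and $A$ with $S_r(x)$ up to homotopy, the conclusion $S_r(x_i)\simeq S_r(x)$ follows.

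The chief obstacle in part~(2) is the uniformity of the local contractibility constants for the family $\{A_i\}$: one needs these constants to depend only on $\kappa$ and $r$, and not on~$i$. I would address this by rerunning the retraction argument of part~(1) simultaneously in each $X_i$, exploiting that the tiny balls $\bar B_{10r}(x_i)$ are uniformly $\CAT(\kappa)$ and that the uniform geodesic extension length inside a tiny ball is stable under Gromov--Hausdorff convergence within the GCBA class, again appealing to the quantitative results of \cite{LN}.
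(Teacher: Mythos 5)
Your plan for part (1) contains a genuine gap at the extension step, and it is not a gap that can be patched by quantitative estimates from \cite{LN}. The issue is that in a GCBA space, geodesic extensions past a point are not unique, and there is in general no continuous choice of extension. The uniqueness you invoke --- ``uniqueness of local geodesics with a prescribed initial direction in a tiny $\CAT(\kappa)$ ball'' --- is false: in a tiny ball geodesics are determined by their \emph{endpoints}, not by an initial point and direction. A tripod (three segments glued at a common endpoint) is a compact, geodesically complete $\CAT(0)$ space, and the geodesic entering the center from one leg can be continued into either of the other two legs; there is no continuous way to extend the radial flow outward past the branch point. In a GCBA space branching of this kind is typical, and precisely one of the main points of the theory is that the ``radial flow'' is well-defined only inward (toward the center of a tiny ball, where the geodesic to $x$ is unique), not outward. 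So the deformation retraction of the inner punctured disk $B^*_r(x)$ onto $S_r(x)$ that you describe is not defined, and part (1) does not follow.

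This error also undermines part (2) as written, since the identification of the annulus $A_i = \bar B_{2r}(x_i)\setminus B_{r/2}(x_i)$ with $S_r(x_i)$ up to homotopy again requires pushing the inner portion of $A_i$ outward across branch points. What you \emph{can} do by inward radial flow is retract $A_i$ onto its inner boundary sphere $S_{r/2}(x_i)$; but identifying that sphere with $S_r(x_i)$ is already the content of part (1), so the circularity remains. The actual mechanism behind the proposition (cited here to \cite[Theorems~1.12, 13.1]{LN}) is not a geodesic flow but the strainer-map machinery: near $x$ the distance function $d_x$ is a $1$-strainer map with locally uniformly contractible fibers, and by the selection/fibration arguments of the type collected in Section~\ref{sec: prel2} this forces $d_x$ to behave like a Hurewicz fibration over an interval, whence all spheres $S_t(x)$ for small $t$ are homotopy equivalent to each other and to the punctured ball, hence to $\Sigma_x$ by Lemma~\ref{prop: whe}. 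The stability statement in part (2) then comes from the homotopy stability of strainer-map fibers under Gromov--Hausdorff convergence (\cite[Theorem~13.1]{LN}), which is in the spirit of the Petersen-type argument you had in mind, but applied to the fiber machinery rather than to a nonexistent radial retraction.
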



\subsection{Homology manifolds} \label{subsec: homman}
 We denote by $D^n$ the closed unit ball in $\R^n$. We call $D^2$ the \emph{unit disk} and denote it by $D$.

Let $M$ be a locally compact, separable metric space
of finite topological dimension.
We say that
$M$ is a
{\em homology $n$-manifold with boundary}
if
for any $p \in M$ we have a point $x \in D^n$
such that
the local homology
$H_{\ast}(M,M \setminus \{ p \})$ at $p$
is isomorphic to $H_{\ast}(D^n,D^n \setminus \{ x \})$.
The {\em boundary} $\partial M$  of $M$
is defined as the set of all points
at which the $n$-th local homologies are trivial.
In the case where the boundary of $M$ is empty,
we simply say that
$M$ is a
{\em homology $n$-manifold}.

If $M$ is a homology $n$-manifold with boundary then $\partial M$
is a closed subset of $M$ and it is  a homology $(n-1)$-manifold
by \cite{Mitch}.

Any homology $n$-manifold (with boundary) has dimension $n$. For $n\leq 2$,  we have   the   theorem of R. Moore,
 see \cite[Chapter IX]{rwilder}.
\begin{thm} \label{thm: bing}
Any homology $n$-manifold with $n\leq 2$ is a topological manifold.
\end{thm}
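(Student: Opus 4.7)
The plan is to dispose of the three cases $n=0,1,2$ separately, with the genuine content being the two-dimensional case.

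For $n=0$, the defining local homology condition forces $H_0(M,M\setminus\{p\})\cong\Z$ at every point $p$. From the long exact sequence of the pair this means that $\{p\}$ is a connected component of a sufficiently small open neighborhood of $p$, which together with local compactness and the absence of boundary gives that $p$ is an isolated point. Hence $M$ is discrete and is trivially a topological $0$-manifold.

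For $n=1$, I would exploit that a homology $1$-manifold is a $1$-dimensional, locally compact, separable metric space; being also a homology manifold it is an $\ANR$, hence locally path-connected. Since at every point the local homology is that of an open interval, no point can be a local cut point with more than two complementary components of a small punctured neighborhood. A short argument using the classification of one-dimensional Peano continua (or equivalently, the classical one-dimensional case of Moore's theorem) then gives that every point has a neighborhood homeomorphic to $\R$.

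For $n=2$, the statement is exactly Moore's theorem on the recognition of $2$-manifolds. I would not reprove it but simply invoke \cite[Chapter IX]{rwilder}. The essential ingredient there is that a locally connected, locally compact, metric space of covering dimension $2$ whose local homology groups at every point agree with those of $\R^2$ is locally euclidean; the proof proceeds by building a neighborhood basis of Jordan curves around each point, bounding disks by the Schoenflies theorem, and promoting these to a chart.

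The main obstacle is the two-dimensional case, which rests on genuinely planar topology (simple closed curves, Schoenflies, Moore's decomposition theorem); the $n\le 1$ cases are essentially formal consequences of the local homology hypothesis and local compactness.
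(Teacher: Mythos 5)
The paper offers no proof of this theorem at all: it is stated and attributed directly to R.~Moore via \cite[Chapter IX]{rwilder}, exactly as you do for the substantive case $n=2$. Your proposal takes essentially the same approach — cite Moore's theorem for the genuine content — while additionally sketching the elementary cases $n=0,1$, which the paper implicitly subsumes under the same citation.
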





\subsection{Examples}

The following example is well-known,  \cite{Berest2},\cite{grovpete}.

\begin{exmp}\label{exmp: double}
Consider a closed Riemannian $(n-2)$-manifold $Y$
which has the homology of $\Sph ^{n-2}$  but is not simply connected
(such manifolds exists in all $n\geq 5$).  Rescaling the metric
we may assume that $Y$  is $\CAT(1)$.
The spherical suspension $X_1=\Sph^0 \ast Y$ of $Y$ is a $\CAT(1)$ space which is a homology manifold and has
exactly two non-manifold points.  The double suspension $X=\Sph ^1 \ast Y$ of $Y$ is a $\CAT(1)$ space homeomorphic to $\mathbb S^n$
by the  double suspension theorem, \cite{Cannon}, \cite{E2}. But for any point $x$ on the $\Sph ^1$-factor, the space of directions $\Sigma _xX$
is isometric to $X_1$, hence not homeomorphic to $\Sph ^{n-1}$.
\end{exmp}

Some additional assumption on the tangent spaces and spaces of directions are needed in Theorem \ref{thm: topreg}:
\begin{exmp}
Let $X$ be the Hilbert cube, hence a compact  $\CAT (0)$ space. At any  $x\in X$ the space of direction $\Sigma _x$ is contractible. Moreover,
at any $x\in X$ the tangent space $T_x$  is homeomorphic to the Hilbert space, as can be deduced from \cite{Bestvina}, \cite{Tor}.
\end{exmp}


\section{Homology manifolds with upper curvature bounds} \label{sec: homo}
\subsection{General observations}  We start with the following:
\begin{lem} \label{lem: homchar}
Let $X$ be a locally compact space with an upper curvature bound.  $X$ is a homology $n$-manifold
if and only if $H_{\ast} (\Sigma _x) =H_{\ast} (\Sph ^{n-1})$, for all $x\in X$.   In this case, $X$ is locally geodesically complete.
\end{lem}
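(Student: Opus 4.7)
The plan is to translate the homology-manifold condition into one on the spaces of directions, via the homotopy equivalence $B_r^*(x)\simeq \Sigma_x$ from Lemma \ref{prop: whe}.

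Fix $x\in X$. For any sufficiently small $r>0$, excision and the long exact sequence of the pair $(B_r(x),B_r^*(x))$, together with the contractibility of $B_r(x)$, yield canonical isomorphisms
\[
H_m(X,X\setminus\{x\}) \;\cong\; \tilde H_{m-1}(B_r^*(x)) \;\cong\; \tilde H_{m-1}(\Sigma_x)
\]
for all $m\geq 1$, together with $H_0(X,X\setminus\{x\})=0$; this is essentially already recorded in the preliminaries. Applied to the model pair $(D^n,D^n\setminus\{y\})$, the same argument gives $H_m(D^n,D^n\setminus\{y\}) \cong \tilde H_{m-1}(\Sph^{n-1})$ when $y$ lies in the interior of $D^n$, whereas for $y\in\partial D^n$ the complement is contractible and the local homology vanishes in every degree. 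Since the lemma treats homology $n$-manifolds \emph{without} boundary, only the interior model can occur, and the homology-manifold condition at $x$ becomes precisely $H_*(\Sigma_x)=H_*(\Sph^{n-1})$. The stated equivalence follows degree by degree.

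For the last clause, the hypothesis $H_*(\Sigma_x)=H_*(\Sph^{n-1})$ forces $\tilde H_{n-1}(\Sigma_x)\cong\Z\neq 0$, so $\Sigma_x$ — and hence any sufficiently small $B_r^*(x)$ — is non-contractible; Lemma \ref{lem: busemann} then supplies the local geodesic completeness.

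The only real subtlety I anticipate is an administrative one: verifying the finite-dimensionality requirement built into the definition of a homology manifold. Once local geodesic completeness is established, $X$ is GCBA, so Kleiner's formula $\dim X = 1 + \sup_{y}\dim\Sigma_yX$ applies; since the compact ANR $\Sigma_x$ has cohomological dimension exactly $n-1$ (non-vanishing of $H_{n-1}$ together with vanishing in higher degrees), one obtains $\dim\Sigma_x=n-1$ and $\dim X=n$ locally. No deeper obstacle arises — the argument is essentially a direct bookkeeping exercise with the homotopy equivalence $B_r^*(x)\simeq \Sigma_x$ and a comparison to the Euclidean model.
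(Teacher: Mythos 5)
Your argument for the equivalence itself, and for local geodesic completeness via Lemma~\ref{lem: busemann}, is exactly the paper's: both proofs run the pair sequence for $(B_r(x),B_r^{\ast}(x))$ through the homotopy equivalence $B_r^{\ast}(x)\simeq \Sigma_x$ of Lemma~\ref{prop: whe} and compare with the Euclidean model, and both then observe that $H_*(\Sph^{n-1})$ is non-trivial, hence $\Sigma_x$ is non-contractible, hence $X$ is GCBA.

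The one genuine gap is in your ``administrative'' finite-dimensionality step. You assert that the compact ANR $\Sigma_x$ has \emph{cohomological dimension exactly} $n-1$ because $H_{n-1}(\Sigma_x)\ne 0$ and $H_k(\Sigma_x)=0$ for $k>n-1$. The lower bound $\dim\Sigma_x\ge n-1$ is fine, but the upper bound does not follow: vanishing of the singular homology of a compact ANR above degree $n-1$ does not bound its covering dimension above by $n-1$ (the $2$-disk has trivial reduced homology but cohomological dimension $2$, since $\check H^2(D^2,\partial D^2)\ne 0$ -- the correct notion of cohomological dimension quantifies over closed pairs). At this point in the argument you do not yet know that $\Sigma_x$ is a homology $(n-1)$-manifold, so you cannot appeal to that either. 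The paper avoids the detour entirely: once local geodesic completeness is known, $X$ is GCBA and therefore has \emph{locally finite} dimension by \cite[Theorem 1.1]{LN}; a locally compact, separable, finite-dimensional space with the local homology of $\R^n$ at every point is then by definition a homology $n$-manifold, and any homology $n$-manifold automatically has dimension $n$ (as recorded in Subsection~\ref{subsec: homman}). So the specific value of $\dim\Sigma_x$ never needs to be computed -- only the qualitative finiteness from GCBA is required, and the rest is bookkeeping. Replace your Kleiner-formula/cohomological-dimension paragraph with that observation and the proof is complete.
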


\begin{proof}
For any $x\in X$  and  $r>0$ as in Lemma \ref{prop: whe},
$\Sigma _x$ is homotopy equivalent to $B_r ^{\ast} (x)$ and $\tilde H_{k} (B_r^{\ast}  (x)) =H_{k+1}(X,X\setminus \{x\})$, for any $k$.

Thus,  if $X$ is a homology $n$-manifold, then  any space of directions $\Sigma _x X$ has the homology of $\Sph^{n-1}$.

 If all spaces of directions have the homology of $\Sph^{n-1}$ then they are non-contractible and, therefore,
 $X$ is locally geodesically complete.
 For any $x \in X$, we have   $H_{m}(X,X\setminus \{ x \}) =H_m (\Sph ^{n-1}) $.  Thus, $X$ has the same local homology
as $\R ^n$.   Since $X$, as any GCBA space, has locally finite dimension, $X$ is locally a homology $n$-manifold. Thus $X$ has topological dimension $n$ and it is a homology $n$-manifold.
\end{proof}

 Using the  contraction along geodesics to the center of a ball, \cite{Mitch}
and the Poincar\'e duality, \cite{Bredon},  
 we obtain
 \cite[Proposition 2.7]{thurston}:
 \begin{lem} \label{lem: spherehomology}
 Let $B_r (x)$ be a tiny ball in a homology $n$-manifold $X$ with an upper curvature bound.
 Then $\bar B_r (x)$   is a  compact, contractible homology manifold with boundary $S_r (x)$. In particular,
 $S_r (x)$ is a homology $(n-1)$-manifold with the same homology  as $\mathbb S^{n-1}$.
 \end{lem}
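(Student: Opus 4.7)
The plan is to establish the three assertions in order: contractibility, the homology-manifold-with-boundary structure (with $S_r(x)$ as the topological boundary), and finally the homology of $S_r(x)$ via Poincar\'e--Lefschetz duality.

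First I would construct the contraction. Since $B_r(x)$ is tiny, $\bar B_r(x)$ sits inside a $\CAT(\kappa)$ region in which any point $y$ is joined to $x$ by a unique geodesic $\gamma_y:[0,d(x,y)]\to X$, and the assignment $y\mapsto \gamma_y$ depends continuously on $y$ (by $\CAT(\kappa)$ comparison of triangles with apex $x$). The straight-line homotopy $H(y,t):=\gamma_y\bigl((1-t)\,d(x,y)\bigr)$ stays inside $\bar B_r(x)$ by convexity of the distance function along geodesics, and retracts $\bar B_r(x)$ onto $x$. So $\bar B_r(x)$ is a compact contractible space, and in particular an ANR.

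Next I would check that $\bar B_r(x)$ is a homology $n$-manifold with boundary equal to $S_r(x)$. For an interior point $p\in B_r(x)$, excision together with the fact that $X$ is a homology $n$-manifold gives $H_{\ast}(\bar B_r(x),\bar B_r(x)\setminus\{p\})\cong H_{\ast}(X,X\setminus\{p\})\cong H_{\ast}(\R^n,\R^n\setminus\{0\})$. For a boundary point $p\in S_r(x)$, I would invoke Mitchell's result \cite{Mitch}: the radial geodesic contraction toward $x$, restricted to a small neighborhood of $p$ in $\bar B_r(x)$, identifies $\bar B_r(x)\setminus\{p\}$ with a space that is ``collared'' along the outward direction from $x$ through $p$, so that the local homology at $p$ vanishes. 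This simultaneously shows that $S_r(x)$ coincides with $\partial \bar B_r(x)$, and by \cite{Mitch} the boundary of a homology $n$-manifold with boundary is a homology $(n-1)$-manifold.

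Finally, I would compute the homology of $S_r(x)$ using Poincar\'e--Lefschetz duality. Since $\bar B_r(x)$ is contractible, it is orientable, and duality gives
\[
H_k(\bar B_r(x), S_r(x))\;\cong\;H^{\,n-k}(\bar B_r(x))
\]
which is $\Z$ for $k=n$ and $0$ otherwise. Plugging this into the long exact sequence of the pair $(\bar B_r(x), S_r(x))$ and using that $\bar B_r(x)$ has trivial reduced homology yields $H_{n-1}(S_r(x))\cong \Z$, $H_k(S_r(x))=0$ for $1\le k\le n-2$, and $H_0(S_r(x))\cong \Z$, matching the homology of $\Sph^{n-1}$. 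The main obstacle in this plan is the middle step: identifying $S_r(x)$ with the topological boundary of $\bar B_r(x)$ as a homology manifold. This is precisely where the tiny-ball hypothesis is essential, since it guarantees that the radial contraction toward $x$ provides the local collar structure required by Mitchell's argument.
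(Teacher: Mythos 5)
Your proposal follows essentially the same route as the paper, which simply cites the geodesic contraction together with Mitchell's theorem and Poincar\'e--Lefschetz duality (the result is Thurston's Proposition 2.7). Your excision argument for interior points, the duality computation, and the overall structure match the intended proof. One small clarification that would tighten your middle step: for a boundary point $p\in S_r(x)$, the geodesic contraction $H(y,t)=\gamma_y((1-t)\,d(x,y))$ already restricts to a contraction of $\bar B_r(x)\setminus\{p\}$, since for $t>0$ every point is moved strictly inside $B_r(x)$ and hence away from $p$ (whereas for an interior point $q\neq x$, points of $\bar B_r(x)$ lying past $q$ on the geodesic through $q$ are carried onto $q$). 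Thus $\bar B_r(x)\setminus\{p\}$ is itself contractible, so the local homology at $p$ vanishes by the long exact sequence of the pair --- this makes the ``collar'' intuition you invoke precise without needing to unpack Mitchell's argument in detail. Mitchell is then needed only for the assertion that $\partial\bar B_r(x)=S_r(x)$ is a homology $(n-1)$-manifold.
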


\subsection{Stability under convergence}
 The following observation is (essentially) a special case of result of E. Begle, \cite{Begle}, see also \cite[Theorem 2.1]{GPW}.  In our situation the proof can be  simplified using the
homotopy properties of distance spheres.

 \begin{lem} \label{lem: stable}
 Let $X_i$ be compact $\CAT(\kappa )$ spaces converging in the Gromov--Hausdorff topology to a compact space $X$.
Let $x_i \in X_i$ converge to $x\in X$ and let $r>0$ be such that for all $i$, the ball $B_r (x_i) \subset X_i$ is a homology $n$-manifold.
Then  $B_r(x)\subset X$  is a homology $n$-manifold.
 \end{lem}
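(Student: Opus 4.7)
The plan is to check the local homology condition $H_m(X, X\setminus \{y\}) \cong H_m(\R^n, \R^n \setminus \{0\})$ at every $y \in B_r(x)$; by Lemma \ref{lem: homchar} applied to the open subspace $B_r(x)$, this is equivalent to $H_*(\Sigma_y X) \cong H_*(\Sph^{n-1})$ for all such $y$. Fix such a $y$ and choose $y_i \in X_i$ with $y_i \to y$; since $d(x_i, y_i) \to d(x,y) < r$, one has $y_i \in B_r(x_i)$ for all large $i$. Because $B_r(x_i)$ is a homology $n$-manifold, Lemma \ref{lem: homchar} renders it locally geodesically complete, so the open subset $B_r(x_i)$ is itself a GCBA space and the same lemma provides $H_*(\Sigma_{y_i} X_i) \cong H_*(\Sph^{n-1})$.

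Next, pick $\rho > 0$ so small that $\bar B_{10\rho}(y)$ is contained in $B_r(x)$ and forms a tiny ball in $X$; then $\bar B_{10\rho}(y_i) \subset B_r(x_i)$ is tiny in the GCBA space $B_r(x_i)$ for all large $i$, and the Gromov--Hausdorff convergence $X_i \to X$ restricts to GH convergence $\bar B_{10\rho}(y_i) \to \bar B_{10\rho}(y)$. Proposition \ref{prop: sph}(1) inside $B_r(x_i)$ gives $S_\rho(y_i) \simeq \Sigma_{y_i} X_i$, hence $H_*(S_\rho(y_i)) \cong H_*(\Sph^{n-1})$. Proposition \ref{prop: sph}(2) then yields $S_\rho(y) \simeq S_\rho(y_i)$ for all large $i$, so $H_*(S_\rho(y)) \cong H_*(\Sph^{n-1})$.

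The remaining task is to promote this sphere-level information to the required local homology at $y$ in the limit space $X$. The cleanest route is to establish that the inclusion $S_\rho(y) \hookrightarrow \bar B_\rho(y) \setminus \{y\}$ induces an isomorphism on homology. Combined with the contractibility of the $\CAT(\kappa)$-ball $\bar B_\rho(y)$, the long exact sequence of the pair $(\bar B_\rho(y), \bar B_\rho(y) \setminus \{y\})$ and excision then deliver
\[
H_m(X, X\setminus \{y\}) \;\cong\; \tilde H_{m-1}\bigl(\bar B_\rho(y) \setminus \{y\}\bigr) \;\cong\; \tilde H_{m-1}(\Sph^{n-1}),
\]
which completes the verification.

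This last step is the \emph{main obstacle}. In each $X_i$ the analogous inclusion $S_\rho(y_i) \hookrightarrow \bar B_\rho(y_i) \setminus \{y_i\}$ is a homotopy equivalence, realized by the radial retraction along geodesics through $y_i$ extended using the local geodesic completeness of $B_r(x_i)$. I expect to transfer this equivalence to the limit by the same stability technique that underlies Proposition \ref{prop: sph}(2), applied now to punctured tiny balls rather than to distance spheres. Alternatively, one could first use the non-vanishing of $H_*(S_\rho(y))$ together with Lemma \ref{lem: busemann} to establish local geodesic completeness of $X$ near $y$, after which Proposition \ref{prop: sph}(1) applies directly inside $X$ and yields $S_\rho(y) \simeq \Sigma_y X$, closing the argument.
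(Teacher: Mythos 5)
Your overall reduction matches the paper's: pass to $H_*(\Sigma_y X) \cong H_*(\Sph^{n-1})$ via Lemma~\ref{lem: homchar}, then transfer this across the Gromov--Hausdorff approximation through distance spheres using Proposition~\ref{prop: sph}. But you have correctly flagged a genuine gap, and both of your proposed patches have problems.

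The first problem appears even earlier than you flagged. Proposition~\ref{prop: sph} is stated for a point in a \emph{GCBA} space $X$, so invoking part (2) to conclude $S_\rho(y)\simeq S_\rho(y_i)$ already presupposes that a neighborhood of $y$ in $X$ is GCBA --- which is exactly what you do not yet have. Your alternative route is also flawed: Lemma~\ref{lem: busemann} requires non-contractibility of the \emph{punctured ball} $B^*_\rho(y)$, not of the distance sphere $S_\rho(y)$. In a locally compact $\CAT(\kappa)$ space that is not locally geodesically complete near $y$, the sphere $S_\rho(y)$ need not be homotopy equivalent to $B^*_\rho(y)$; for instance, take a compact metric tree with a sequence of short twigs accumulating at $y$, where the punctured ball is a contractible tree while $S_\rho(y)$ is a finite set with many points. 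So $H_*(S_\rho(y))\ne 0$ does not deliver the hypothesis of Lemma~\ref{lem: busemann}. Your first route is not obviously wrong, but it amounts to establishing a new GH-stability statement for punctured balls that you have not proved.

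The paper bypasses all of this with a single citation: since the $B_r(x_i)$ are GCBA by Lemma~\ref{lem: homchar}, the limit ball $B_r(x)$ is \emph{also} GCBA, by \cite[Example 4.3]{LN} --- local geodesic completeness is stable under pointed Gromov--Hausdorff convergence of $\CAT(\kappa)$ spaces. This is the key ingredient you are missing, and it is best imported from the cited reference rather than re-derived. Once $B_r(x)$ is known to be GCBA, Proposition~\ref{prop: sph} applies legitimately both in $X$ and in $X_i$, giving $S_t(z)\simeq\Sigma_z X$ and $S_t(z)\simeq S_t(z_i)$ for large $i$; then Lemma~\ref{lem: spherehomology} applied in $B_r(x_i)$ identifies $H_*(S_t(z_i))$ with $H_*(\Sph^{n-1})$, and the proof closes. (A further small point: GCBA-ness of $B_r(x)$ also yields locally finite dimension, which is part of the definition of a homology $n$-manifold and which your write-up does not address.)
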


\begin{proof}
Due to Lemma \ref{lem: homchar},  the open balls $B_r(x_i)$ are GCBA spaces, hence so is
$B_r (x)$, compare \cite[Example 4.3]{LN}. In particular, the  dimension of $B_r(x)$ is locally finite.

It remains to prove that $H_{\ast} (\Sigma _z X) =H_{\ast} (\mathbb S^{n-1})$ for all $z\in X$.
Write $z$ as a limit of points $z_i \in B_r (x_i)$. By Proposition \ref{prop:  sph} we find some $r>t>0$,
such that  $S_t (z) $  is homotopy equivalent to $\Sigma _z X$ and to $S_t (z_i)$, for all $i$ large enough.
Thus, the homology of $\Sigma _z X$ coincides with the homology of $\mathbb S^{n-1}$ by
 Lemma \ref{lem: spherehomology}.
\end{proof}

As a consequence we deduce:
\begin{cor} \label{cor: homlink}
Let $X$ be a homology $n$-manifold with an upper curvature bound.  Then, for any $x\in  X$, the tangent
space $T_xX$ is a homology $n$-manifold and $\Sigma _x$ is a homology $(n-1)$-manifold.
\end{cor}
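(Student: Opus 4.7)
The plan is to reduce the statement to Lemmas \ref{lem: homchar} and \ref{lem: stable}, exploiting the cone structure of $T_xX$. The argument proceeds in two steps: first, one shows that $T_xX$ is a homology $n$-manifold by invoking the Gromov--Hausdorff stability result; second, one deduces that $\Sigma _x$ is a homology $(n-1)$-manifold from the cone structure together with the suspension isomorphism in reduced homology.

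For the first step, pick a sequence $\lambda _i \to \infty$ and consider the rescaled pointed spaces $(X,\lambda _i \cdot d, x)$.  For any fixed $R>0$, the closed ball of radius $R$ around $x$ in $(X,\lambda _i \cdot d)$ equals the closed ball of radius $R/\lambda _i$ in the original metric, which for $i$ large lies inside a tiny ball and is therefore a compact $\CAT(\kappa / \lambda _i ^2)$ space; in particular, eventually it is $\CAT(\kappa ')$ for any fixed $\kappa '>0$.  By definition of the tangent cone, these balls converge in the Gromov--Hausdorff topology to the closed ball of radius $R$ around the cone vertex $o\in T_xX$.  Since each such rescaled ball is an open subset of the homology $n$-manifold $X$, every open subball (say $B_r$ with $r<R$) is itself a homology $n$-manifold.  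Lemma \ref{lem: stable} then gives that $B_r(o)\subset T_xX$ is a homology $n$-manifold.  As $r$ was arbitrary and the balls around $o$ exhaust $T_xX$, we conclude that $T_xX$ is a homology $n$-manifold.

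For the second step, identify $T_xX$ canonically with the Euclidean cone over $\Sigma _x$.  Given $v\in \Sigma _x$, let $\bar v\in T_xX$ denote the corresponding point at distance $1$ from $o$.  The explicit cone structure yields an isometric splitting $T_{\bar v}(T_xX)\cong \R \times T_v(\Sigma _x)$, so $\Sigma _{\bar v}(T_xX)$ is the spherical suspension of $\Sigma _v(\Sigma _x)$.  Applying Lemma \ref{lem: homchar} to the homology $n$-manifold $T_xX$ gives $H_{\ast}(\Sigma _{\bar v}(T_xX))=H_{\ast}(\Sph ^{n-1})$, and the suspension isomorphism in reduced homology then forces $H_{\ast}(\Sigma _v(\Sigma _x))=H_{\ast}(\Sph ^{n-2})$.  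Since $v\in \Sigma _x$ was arbitrary, a second application of Lemma \ref{lem: homchar} shows that $\Sigma _x$ is a homology $(n-1)$-manifold.

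I do not expect a serious obstacle in this proof; the argument is a direct combination of Lemmas \ref{lem: homchar}, \ref{lem: stable} with the suspension isomorphism.  The only technicalities worth checking are the preservation of the $\CAT$ hypothesis of Lemma \ref{lem: stable} after rescaling, which is automatic since $\kappa /\lambda _i^2\to 0$, and the identification of $\Sigma _{\bar v}(T_xX)$ with the spherical suspension of $\Sigma _v(\Sigma _x)$, which is a direct consequence of the cone structure of $T_xX$.
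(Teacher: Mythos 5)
Your proof is correct. Step 1 is essentially identical to the paper's argument: $(T_xX,0)$ is the pointed Gromov--Hausdorff limit of rescaled balls around $x$, each of which is a homology $n$-manifold, so Lemma~\ref{lem: stable} applies. For Step 2, you take a genuinely different route. The paper observes directly that $T_x\setminus\{0\}$ is homeomorphic to $\Sigma_x\times\R$, which is an open subset of the homology $n$-manifold $T_x$, and concludes that $\Sigma_x$ is a homology $(n-1)$-manifold by the standard fact that a product $Y\times\R$ being a homology $n$-manifold forces $Y$ to be a homology $(n-1)$-manifold (Künneth on local homology). You instead descend one further level into the cone structure: you identify $T_{\bar v}(T_xX)\cong\R\times T_v(\Sigma_x)$ at non-vertex points $\bar v$, so that $\Sigma_{\bar v}(T_xX)$ is the spherical suspension of $\Sigma_v(\Sigma_x)$; the forward direction of Lemma~\ref{lem: homchar} applied to the homology $n$-manifold $T_xX$ plus the suspension isomorphism pins down $H_*(\Sigma_v\Sigma_x)=H_*(\Sph^{n-2})$ for every $v$, and the backward direction of Lemma~\ref{lem: homchar} then yields the claim. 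Your route avoids invoking the product lemma for local homology at the expense of two extra appeals to Lemma~\ref{lem: homchar} and the tangent-cone splitting; both arguments are sound, and the ingredients you need ($T_xX$ being locally compact $\CAT(0)$, $\Sigma_x$ being compact $\CAT(1)$ and GCBA, the splitting of the tangent cone of a Euclidean cone at a non-vertex point) are all available under the GCBA hypothesis supplied by Lemma~\ref{lem: homchar}.
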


\begin{proof}
The space $X$ is GCBA by Lemma \ref{lem: homchar}. Thus, the tangent space $(T_x X,0)$ is a limit of rescaled metric balls around $x$  in $X$. Due to Lemma \ref{lem: stable},
 $T_x$ is a homology $n$-manifold.   The homology $n$-manifold  $T_x \setminus \{0 \}$ is  homeomorphic
 to $\Sigma _x \times \mathbb R$.
Therefore, $\Sigma _x$ is a homology $(n-1)$-manifold.
\end{proof}

\subsection{Simple implications in the main theorem}
We can already discuss the simple implications in our main theorem.
We start with the following (folklore) result, compare \cite[Proposition 3.12]{BB}:

\begin{lem} \label{lem: 1to3}
Let $X$ be a topological $n$-manifold with an upper curvature bound.
Then any space of directions $\Sigma _x$ is homotopy equivalent to $\Sph ^{n-1}$.
\end{lem}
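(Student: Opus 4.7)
The plan is to compare small punctured metric balls (which are homotopy equivalent to $\Sigma_x$ by Lemma \ref{prop: whe}) with small punctured topological balls taken in a Euclidean chart (which are clearly homotopy equivalent to $\Sph^{n-1}$) through a telescope of nested inclusions. First I fix a neighborhood $U$ of $x$ homeomorphic to $\R^n$, identify $x$ with the origin, and write $B^{\mathrm{euc}}_\epsilon$ for the Euclidean balls in this chart. Since the metric balls $B_r(x)$ are open neighborhoods of $x$ sitting inside $U$, I choose decreasing sequences $\epsilon_1 > r_1 > \epsilon_2 > r_2 > \cdots \to 0$ satisfying $B^{\mathrm{euc}}_{\epsilon_i} \supset B_{r_i}(x) \supset B^{\mathrm{euc}}_{\epsilon_{i+1}}$ for every $i$, with $r_1$ small enough for Lemma \ref{prop: whe} to apply to each $B^*_{r_i}(x)$.

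Setting $E_i := B^{\mathrm{euc}}_{\epsilon_i} \setminus \{x\}$ and $M_i := B^*_{r_i}(x)$, I obtain the nested chain
\[
E_1 \supset M_1 \supset E_2 \supset M_2 \supset \cdots
\]
I would then argue that each inclusion between consecutive $E$'s is a homotopy equivalence via the Euclidean radial retraction (so each $E_i \simeq \Sph^{n-1}$), and each inclusion between consecutive $M$'s is a homotopy equivalence via the geodesic contraction $(t,y) \mapsto \gamma_{xy}(t \cdot d(x,y))$, which is continuous in tiny $\CAT(\kappa)$ balls and deformation retracts each $M_i$ onto any smaller punctured sub-ball contained in it (so each $M_i \simeq \Sigma_x$ by Lemma \ref{prop: whe}).

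Now set $\alpha_i : M_i \hookrightarrow E_i$ and $\beta_i : E_{i+1} \hookrightarrow M_i$. Both compositions $\alpha_i \circ \beta_i$ and $\beta_i \circ \alpha_{i+1}$ are the inclusions between consecutive $E$'s or $M$'s, hence homotopy equivalences. Consequently each $\alpha_i$ admits a right homotopy inverse $\beta_i \circ (\alpha_i \beta_i)^{-1}$ and, for $i \geq 2$, a left homotopy inverse $(\beta_{i-1}\alpha_i)^{-1} \circ \beta_{i-1}$. A map possessing both a left and a right homotopy inverse is itself a homotopy equivalence, so each $\alpha_i$ is a homotopy equivalence, and therefore $\Sigma_x \simeq M_i \simeq E_i \simeq \Sph^{n-1}$.

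The main obstacle is justifying that the inclusions $M_{i+1} \hookrightarrow M_i$ are genuine homotopy equivalences, since Lemma \ref{prop: whe} only records the abstract homotopy equivalence of each $M_i$ with $\Sigma_x$. The explicit $\CAT(\kappa)$ geodesic contraction resolves this point: it deformation retracts $M_i$ onto any smaller $B^*_s(x)$ contained in it, turning the inclusion into a concrete homotopy equivalence. Once this is in place, the telescoping algebraic argument is straightforward and does not depend on knowing anything further about $\Sigma_x$ (such as simple connectivity), so the same proof works uniformly in all dimensions $n$.
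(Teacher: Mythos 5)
Your argument is correct, and it reaches the conclusion by a genuinely different route than the paper. Both proofs begin from the same geometric picture: a Euclidean chart $U$ around $x$, the sandwiching of small metric balls with small Euclidean balls, and the geodesic contraction $(t,y)\mapsto\gamma_{xy}(t\cdot d(x,y))$ in a tiny $\CAT(\kappa)$ ball, which shows that the inclusions of nested punctured metric balls are homotopy equivalences. The paper, however, extracts from this sandwiching only that the punctured balls (hence $\Sigma_x$) are simply connected, and then must invoke the homology-manifold machinery of Section~3 (Lemma~\ref{lem: homchar}, Corollary~\ref{cor: homlink}, built on Begle-type convergence results) to know $\Sigma_x$ has the homology of $\Sph^{n-1}$, finishing via Whitehead's theorem, with a separate case for $n\leq 3$ via Theorem~\ref{thm: bing}. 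Your telescope chain $E_1\supset M_1\supset E_2\supset M_2\supset\cdots$, together with the 2-out-of-6 property (spelled out here through left and right homotopy inverses), gets directly to $\Sigma_x\simeq\Sph^{n-1}$ with no homology computation, no Whitehead, and no case split on $n$ — a cleaner and more elementary argument. One small terminological point: the geodesic contraction $R_\lambda(y)=\gamma_{xy}(\lambda\, d(x,y))$ is not literally a deformation retraction of $M_i$ onto $M_{i+1}$ (it does not fix $M_{i+1}$ pointwise), but it is a homotopy inverse to the inclusion, which is exactly what the telescope algebra needs; your final sentence of that paragraph already states this correctly, so no repair is required.
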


\begin{proof}
 Any space of directions $\Sigma _x$ is an ANR and  compact homology $(n-1)$-manifold with the homology of $\Sph ^{n-1}$, 
 Lemma \ref{lem: homchar}, Corollary \ref{cor: homlink}.
 
 If $n\leq 3$ then  $\Sigma _x$ is
homeomorphic to $\Sph^{n-1}$ by Theorem \ref{thm: bing}.

 If $n\geq 3$ then $\Sigma _x$
by  Whitehead's theorem  it suffices to prove that $\Sigma _x$ is simply connected.

Consider a small neighborhood $U$ of $x$ homeomorphic to a Euclidean ball  and  small numbers $r_1,r_2 >0$ such that
$B_{r_1} (x) \subset U\subset B_{r_2} (x)$.  The inclusion $B^{\ast} _{r_1} (x)  \to B^{\ast} _{r_2} (x)$ is a homotopy equivalence factoring
through the  simply connected space $U\setminus \{ x\}$.  This implies  that all small  punctured balls
$B^{\ast} _{r} (x)$ are simply connected. Due to Lemma \ref{prop: whe}, $\Sigma _x$ is simply connected,
finishing the proof.
\end{proof}




\begin{lem} \label{lem: same}
Let $X$ be a locally compact space  with an upper curvature bound. Assume that all spaces of directions $\Sigma_x X$ are homotopy equivalent to the same
non-contractible space $\Sigma$.  Then $\Sigma $ is homotopy equivalent to  $\Sph ^{n-1}$, for some $n$, and $X$ is a homology manifold.
\end{lem}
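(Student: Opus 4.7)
The plan is to combine the hypothesis with Lemma \ref{lem: busemann}, the density result \cite[Theorem 1.3]{LN}, and Lemma \ref{lem: homchar}. First, since $\Sigma$ is non-contractible and every $\Sigma_xX$ is homotopy equivalent to $\Sigma$, each $\Sigma_xX$ is non-contractible. By Lemma \ref{prop: whe}, the punctured ball $B_s^{\ast}(x)$ is non-contractible for all sufficiently small $s>0$. Lemma \ref{lem: busemann} then gives that $X$ is locally geodesically complete; since $X$ is also locally compact with an upper curvature bound, every tiny ball in $X$ is a GCBA space to which the results of \cite{LN} apply.

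Inside any such tiny ball, \cite[Theorem 1.3]{LN} furnishes a point $x$ whose tangent cone $T_xX$ is isometric to some Euclidean space $\R^n$. Its space of directions $\Sigma_x$ is then isometric to the round sphere $\Sph^{n-1}$, and the hypothesis yields $\Sigma \simeq \Sigma_x \simeq \Sph^{n-1}$. The integer $n$ is determined by the homotopy type of $\Sigma$ alone, so it does not depend on the chosen point or on the connected component of $X$ in which that point lies. Consequently, for every $y\in X$ we have $\Sigma_yX \simeq \Sigma \simeq \Sph^{n-1}$ and hence $H_{\ast}(\Sigma_yX) = H_{\ast}(\Sph^{n-1})$. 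Lemma \ref{lem: homchar} then concludes that $X$ is a homology $n$-manifold.

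The argument is essentially a bookkeeping exercise assembling results already recalled in the preliminaries, and the only conceptual point is the middle step: the density of points with Euclidean tangent cones in GCBA spaces immediately pins down the otherwise abstract homotopy type of $\Sigma$ to that of a sphere. Without the assumption that all spaces of directions share a common homotopy type one would merely obtain a dense family of tangential spheres of \emph{a priori} varying dimensions; with the assumption in hand, those dimensions are forced to agree and to coincide with the integer $n$ appearing in the conclusion.
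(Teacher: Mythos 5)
Your proposal is correct and follows exactly the paper's route: non-contractibility of all $\Sigma_xX$ plus Lemma \ref{lem: busemann} yields GCBA, \cite[Theorem 1.3]{LN} produces a point with Euclidean tangent cone pinning down the homotopy type of $\Sigma$ to $\Sph^{n-1}$, and Lemma \ref{lem: homchar} finishes. The only difference is that you spell out the intermediate appeal to Lemma \ref{prop: whe} and note that $n$ is independent of the component, both of which the paper leaves implicit.
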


\begin{proof}
By assumption, all spaces of directions are non-contractible. By Lemma \ref{lem: busemann}, $X$ is a GCBA space.  Any GCBA space has a point with space of directions isometric  to a sphere $\Sph ^{n-1}$, \cite[Theorem 1.3]{LN}.
Then, by assumption, all spaces of directions are homotopy equivalent to $\Sph ^{n-1}$. By Lemma \ref{lem: homchar}, $X$ must be a homology $n$-manifold.
\end{proof}

\begin{lem} \label{lem: 2to3}
Let $X$ be a locally compact space  with an upper curvature bound. Assume that all tangent spaces  $T_x X$ are homeomorphic
to the same finite-dimensional space $T$. Then $T$ is homeomorphic to $\mathbb R^n$, for some $n$, and all spaces of directions
are    homotopy equivalent to $\mathbb S^{n-1}$.
\end{lem}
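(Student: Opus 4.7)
\emph{Plan.} The goal is to identify $T$ topologically with $\R^n$; the conclusion $\Sigma_x\simeq\Sph^{n-1}$ then follows at once from the radial contraction $\Sigma_x\simeq T_xX\setminus\{0_x\}\cong\R^n\setminus\{0\}$ in the Euclidean cone. The strategy is to first establish that $X$ is a GCBA space and then apply \cite[Theorem~1.3]{LN} to locate a point $y_1\in X$ whose tangent cone $T_{y_1}X$ is isometric to some Euclidean space $\R^k$; the identification $T_{y_1}X\cong T$ will then give $T\cong\R^k$, and a dimension count forces $k=n$.

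Since the claim is local, I may assume that $X$ is compact and separable. Kleiner's dimension formula gives $\dim X=\sup_{x}\dim T_xX=\dim T=n$, and his non-vanishing result provides a point $y_0\in X$ with $H_{n-1}(\Sigma_{y_0})\neq 0$; in particular $\Sigma_{y_0}$ is non-contractible. The essential step is to propagate this non-contractibility to every $\Sigma_x$: once this is done, Lemma~\ref{lem: busemann} yields that $X$ is locally geodesically complete, hence GCBA.

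For the propagation, fix $x\in X$ and compose homeomorphisms $T_xX\cong T\cong T_{y_0}X$ to send the vertex $0_x$ to some point $q\in T_{y_0}X$. Radial contraction of cone rays identifies $T_xX\setminus\{0_x\}\simeq\Sigma_x$, and contractibility of $T_{y_0}X$ together with the long exact sequence of the pair gives
\[
\tilde H_\ast(\Sigma_x)\;\cong\;H_{\ast+1}\!\left(T_{y_0}X,\;T_{y_0}X\setminus\{q\}\right),
\]
so the task becomes showing that this local homology at $q$ does not vanish. For $q=0_{y_0}$ this is $\tilde H_\ast(\Sigma_{y_0})$, non-trivial by construction. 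For $q=tv$ with $t>0$ and $v\in\Sigma_{y_0}$, the Euclidean cone structure $T_{y_0}X=C\Sigma_{y_0}$ makes a neighborhood of $q$ homeomorphic to a neighborhood of $(v,t)\in\Sigma_{y_0}\times\R$, so by excision the local homology at $q$ is a degree shift of the local homology of $\Sigma_{y_0}$ at $v$.

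The main obstacle is to ensure that this last local homology is non-trivial at every relevant $v$: a priori $\Sigma_{y_0}$ might have degenerate points (think of a free endpoint) with vanishing local homology. Here the assumption $T_xX\cong T$ for every $x$ is essential, since a degenerate $v$ would, through the chain of homeomorphisms, yield some $x\in X$ whose $\Sigma_x$ is contractible, forcing all tangent cones of $X$ to be cones over contractible links — incompatible with $H_{n-1}(\Sigma_{y_0})\neq 0$. Making this rigorous, presumably by iterating the Kleiner formula on $\Sigma_{y_0}$ (itself a compact separable $(n-1)$-dimensional space with an upper curvature bound), is the technical core of the proof; once accomplished, the GCBA property of $X$ combined with \cite[Theorem~1.3]{LN} delivers the desired identification $T\cong\R^n$.
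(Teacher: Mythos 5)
There is a genuine gap, and it sits exactly where you flag the ``technical core.'' You need the local homology of $\Sigma_{y_0}$ at every transported vertex $v$ to be non-trivial, but the reason you offer for this does not work. The claim that a degenerate $v$ would ``force all tangent cones of $X$ to be cones over contractible links --- incompatible with $H_{n-1}(\Sigma_{y_0})\neq 0$'' confuses the homeomorphism type of $T$ with the homotopy type of a \emph{chosen} cone link. The homeomorphism $T_xX\cong T_{y_0}X$ need not send vertex to vertex, and a single Euclidean cone can carry links of different homotopy types depending on which point one takes as ``vertex.'' So finding one $x$ with $\tilde H_\ast(\Sigma_x)=0$ produces no contradiction with $H_{n-1}(\Sigma_{y_0})\neq 0$; it only reproduces the $x$ you started from, i.e.\ the argument is circular. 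Iterating Kleiner's non-vanishing result on $\Sigma_{y_0}$ would again only certify \emph{one} good direction $v\in\Sigma_{y_0}$, never all of them. There is also a secondary issue: even if the local homologies were shown non-trivial, $\tilde H_\ast(\Sigma_x)\neq 0$ is what you would get, and $\tilde H_\ast(\Sigma_x)=0$ does not by itself imply $\Sigma_x$ is contractible, so some care is needed before invoking Lemma~\ref{lem: busemann}.

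The paper sidesteps all of this by comparing the two links directly through the given homeomorphism and both radial structures, with no local homology at all. Given a homeomorphism $I:T_x\to T_y$, restrict $I$ to a metric sphere $\Sigma_x^R\subset T_x$ of large radius $R$; since $I^{-1}(0_y)$ lies in some bounded ball, for $R$ large enough $I(\Sigma_x^R)$ avoids $0_y$ and so lands in $T_y\setminus\{0_y\}\cong(0,\infty)\times\Sigma_y$. Composing with the projection to $\Sigma_y$ gives a map $\hat I:\Sigma_x\to\Sigma_y$. Sandwiching metric balls around $0_x$ and $0_y$ alternately (using the cone structures on \emph{both} $T_x$ and $T_y$) shows $\hat I$ is a weak homotopy equivalence, hence a homotopy equivalence since spaces of directions are $\ANR$s. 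Thus all $\Sigma_x$ are homotopy equivalent to one fixed $\Sigma$. Kleiner then supplies a single non-contractible $\Sigma_{y_0}$, so $\Sigma$ is non-contractible, and Lemma~\ref{lem: same} concludes that $\Sigma\simeq\Sph^{n-1}$ and that $X$ is a homology $n$-manifold; the identification $T\cong\R^n$ then follows as you anticipated from GCBA and \cite[Theorem~1.3]{LN}. The missing idea in your proposal is precisely this direct construction of a homotopy equivalence between the two links, which replaces the unproved propagation step.
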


\begin{proof}
For points $x,y\in X$ there is, by assumption, a homeomorphism $I:T_x=C(\Sigma _x) \to T_y =C(\Sigma _y)$.
Restricting $I$ to a large distance sphere in $T_x$ around the origin, we obtain an embedding $I:\Sigma _x  \to C(\Sigma_y) \setminus \{y \} =(0,\infty )\times \Sigma _y$.
Composing with the projection to the second factor we obtain a map $\hat I:\Sigma _x\to \Sigma _y$, and it is easy to see (using the cone structures of $T_x$ and $T_y$) that $\hat I$ is a weak homotopy equivalence.
Since the spaces of directions are $\ANR$s, $\hat I$ is  a homotopy equivalence.  Thus, all spaces of directions are homotopy equivalent.

Due to \cite{Kleiner}, $X$ has finite dimension $n$, equal to the dimension of $T$.  Then,  by \cite{Kleiner} there exists some $x$ such that
$\Sigma _x$ is not contractible. By Lemma \ref{lem: same}, there exists some $n$ such that all spaces of directions are homotopy equivalent to $\Sph^{n-1}$.

Moreover, $X$ is a homology $n$-manifold and  a GCBA space by Lemma \ref{lem: homchar}.
By \cite{LN} there exists a point $x\in X$
with tangent space isometric to  $\R  ^n$. Therefore, $T$ is  homeomorphic to $\mathbb R^n$.
\end{proof}

\section{Homotopy stability  and Hurewicz fibrations} \label{sec: prel2}
\subsection{Uniform local contractibility}
Following \cite{Petersen1}  we say that  a functions $\rho:[0,r_0) \to [0,\infty )$ is a \emph{contractibility function}
if $\rho (0)=0$, $\rho (t) \geq t$, for all $t\in[0,r_0)$,  and $\rho$ is continuous at $0$.

 \begin{defn} \label{defn: glob}
We say that a family  $\mathcal F$ of metric spaces is \emph{locally uniformly contractible}
if  there exists a contractibility function $\rho :[0,r_0) \to [0,\infty )$ such that for any space $X$ in the family $ \mathcal F$,
any point $x\in X$ and any $0<r<r_0$, the ball $B_r(x)$ is contractible within the ball $B_{\rho (r)} (x)$.
\end{defn}

For example, the family of all  $\CAT(\kappa )$ spaces is locally uniformly contractible with $\rho :[0, \frac {\pi }{\sqrt  \kappa} ) \to [0,\infty)$ being the identity map.

 A  compact, finite-dimensinal space is locally uniformly contractible if and only if it is  an $\ANR$. 

 We will use the  notion of \emph{$\epsilon$-equivalence}, \cite{ChapmanFerry}, a controlled version of homotopy equivalence.  A continuous map $f:X\to Y$ between metric spaces is called an $\epsilon$-equivalence
if there exists a continuous map $g:Y\to X$  with the following property. There exist homotopies $F$ and $G$ 
 of $f\circ g$ and $g\circ f$ to the   respective identity map of $Y$ and $X$ such that the $F$-flow line of any point
 in $Y$  and the $f$-image of the $G$-flow line of any point in $X$  has diameter less than $\epsilon$ in $Y$.

The following result is due to P. Petersen,   \cite[Theorem A]{Petersen1}:
\begin{thm} \label{thm: petersen}
For any $n,\epsilon >0$ and any family $\mathcal F$ of locally  uniformly contractible metric spaces of dimension at
most $n$,
there exists some $\delta  >0$ such that the following holds true. Any  pair of spaces $X,Y \in \mathcal F$,
with  Gromov--Hausdorff distance  at most $\delta $ are $\epsilon$-equivalent.
\end{thm}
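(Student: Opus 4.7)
The plan is the standard nerve-and-partition-of-unity construction, with the uniform contractibility used to extend maps over skeleta while keeping the diameter growth under control. Let $\rho$ denote the contractibility function for $\mathcal F$, choose a scale $r > 0$ with $r < r_0$ and $\rho^{n+2}(100\, r) < \epsilon$ (here $\rho^k$ means $k$-fold iteration), and set the required Gromov--Hausdorff bound to be some $\delta < r/100$.

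Given $X, Y \in \mathcal F$ with $d_{GH}(X, Y) < \delta$, a $\delta$-correspondence furnishes a maximal $r$-separated net $\{x_i\}$ in $X$ together with related points $\{y_i\}$ in $Y$ satisfying $|d(x_i, x_j) - d(y_i, y_j)| \leq 3\delta$. The balls $B_{2r}(x_i)$ cover $X$, and since $\dim X \leq n$ one may pass to a locally finite refinement of order at most $n + 1$; its nerve $N_X$ is therefore a simplicial complex of dimension at most $n$, and a subordinate partition of unity $\{\psi_i\}$ yields the barycentric map $\beta_X \colon X \to |N_X|$. Construct $\gamma_X \colon |N_X| \to Y$ inductively over the skeleta: set $\gamma_X([x_i]) = y_i$, and on a $k$-simplex with vertices $[x_{i_0}], \ldots, [x_{i_k}]$ note that the corresponding $y_{i_l}$ lie pairwise within $5r$. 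By the induction hypothesis the image of the boundary of this simplex lies in a ball of radius $\rho^{k-1}(5r)$ in $Y$, which contracts inside a ball of radius $\rho^{k}(5r)$, so one extends $\gamma_X$ continuously over the simplex into that larger ball. Define $f := \gamma_X \circ \beta_X \colon X \to Y$ and, symmetrically, $g \colon Y \to X$ from the analogous construction on $Y$.

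It then remains to construct homotopies $g \circ f \simeq \id_X$ and $f \circ g \simeq \id_Y$ with flow lines of diameter below $\epsilon$. By the construction above, $f(x)$ lies within $\rho^n(5r)$ of every $y_i$ with $\psi_i(x) > 0$, and similarly for $g$; chaining these estimates yields $d(g(f(x)), x) \leq \rho^{n+1}(10 r)$. One then runs the same skeletal extension for the pair $(g \circ f, \id_X) \colon X \times \{0, 1\} \to X$ using a locally finite cover of $X \times [0, 1]$ of order at most $n + 2$ whose nerve has dimension at most $n + 1$; each $k$-skeletal extension applies $\rho$ once more, so one obtains a homotopy with flow-line diameter at most $\rho^{n+2}(100\, r) < \epsilon$ by the choice of $r$. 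The principal obstacle, and essentially the only delicate point, is the careful bookkeeping of this iterated diameter blow-up: each inductive step substitutes the current diameter $d$ by a new bound of order $\rho(d)$, so one needs $\delta$ small enough in terms of $\epsilon$, $n$, and $\rho$ that $n + 2$ compositions of $\rho$ starting from a fixed multiple of $r$ remain below $\epsilon$. A subsidiary but standard point is the existence of cover refinements of the correct order, ensured by the hypothesis $\dim X \leq n$.
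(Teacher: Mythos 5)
The paper does not prove this statement; it is quoted as Theorem~A of Petersen's paper (reference \cite{Petersen1} in the bibliography) and used as a black box. Your nerve/skeletal-extension sketch is essentially the standard argument behind that theorem, so there is no divergence from ``the paper's approach'' to report --- the comparison target is Petersen's original proof.

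On the merits: the outline is sound (bounded-order cover via $\dim \le n$, barycentric map to the nerve, skeletal extension controlled by iterated applications of $\rho$, and an analogous construction on $X\times[0,1]$ for the homotopies). One small slip in the bookkeeping: the inductive radius estimate is not literally $\rho^{k}(5r)$. If $C_k$ bounds the radius, around a fixed vertex $y_{i_0}$, of the image of the $k$-skeleton of a simplex containing $[x_{i_0}]$, then the boundary of a $(k+1)$-face that does \emph{not} contain $[x_{i_0}]$ is only guaranteed to lie in $B_{C_k + 5r}(y_{i_0})$, so the correct recursion is $C_{k+1} = \rho(C_k + 5r)$ rather than $C_{k+1}=\rho(C_k)$. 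This does not hurt the argument: since $\rho(0)=0$ and $\rho$ is continuous at $0$, the sequence $C_0 = 0$, $C_{k+1}=\rho(C_k+5r)$ still tends to $0$ uniformly in $k\le n+2$ as $r\to 0$, so you can still choose $r$ (and then $\delta$) to force the final flow-line diameter below $\epsilon$. A second point worth making explicit is the one you labeled subsidiary: after refining $\{B_{2r}(x_i)\}$ to a cover $\{U_\alpha\}$ of order $\le n+1$, you must pick for each $\alpha$ an index $i(\alpha)$ with $U_\alpha\subset B_{2r}(x_{i(\alpha)})$ and send the nerve vertex $\alpha$ to $y_{i(\alpha)}$; the pairwise-$5r$ estimate for $y_{i(\alpha_0)},\dots,y_{i(\alpha_k)}$ then comes from a common point of the $U_{\alpha_j}$, not from adjacency of the original net points.
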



When dealing with the family of   fibers of  a map the following variant of Definition \ref{defn: glob} seems  more suitable,
compare \cite{Ungar}.

\begin{defn} \label{defn: loc}
Let $F:X\to Y$ be a  map between metric spaces.
We say that  \emph{$F$ has locally uniformly contractible fibers}
if the following condition holds true for any point $x\in X$ and every neighborhood $U$ of $x$ in $X$.
There exists a neighborhood $V\subset U$ of $x$ in $X$ such that for any fiber
$F^{-1}(y)$ with non-empty intersection $F^{-1} (y)\cap V$, this intersection is contractible in $F^{-1} (y)\cap U$.
\end{defn}

 For    $X$ compact, a map $F:X\to Y$ has locally uniformly contractible fibers in the sense of Definition \ref{defn: loc}
if and only if the family of the fibers is locally uniformly contractible in the sense of Definition \ref{defn: glob}.

\subsection{Relation to Hurewicz fibrations}
A   map  $F:X\to Y$ between metric spaces is called   a \emph{Hurewicz fibration}
if it  satisfies the homotopy lifting property with respect to all spaces, \cite[Section 4.2]{Hatcher}, \cite{Ungar}.

The map $F$ is called open if the images of open sets are open. It is called proper if the preimage of any compact set is compact.

Any locally compact metric space carries a complete metric. This allows us to formulate  Theorems \ref{thm: selection},
\ref{thm: hurewicz}, \ref{thm: localstatement}  below for locally compact metric spaces, while the original formulations
in \cite{Michael2}, \cite{Ungar} are done for complete metric spaces.

We formulate a special case of the continuous selection theorems of E. Michael, \cite[Theorem 1.2]{Michael2}, as   in
in \cite[Theorem M]{Dyer}:
\begin{thm} \label{thm: selection}
Let $F:X\to Y$ be an open  map with locally uniformly contractible fibers between finite dimensional, locally compact metric spaces.
Then, for any  $x\in X$, there exist a neighborhood $U$ of $F(x)$ in $Y$ and a continuous map $s:U\to X$ such that $F\circ s$ is the identity.
\end{thm}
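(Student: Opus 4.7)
The statement is a local version of E. Michael's classical selection theorem, and I would follow the standard strategy of constructing the section $s$ as the uniform limit of continuous approximate sections $s_n \colon U \to X$ satisfying $d(F \circ s_n, \id_U) < \epsilon_n$ with $\epsilon_n \to 0$ and $\sum_n d(s_n, s_{n+1}) < \infty$. Before starting the iteration, I would localize using openness of $F$ and local compactness of $X$: choose a compact neighborhood $K$ of $x$ in $X$ so that $F(K)$ contains a compact neighborhood $V$ of $F(x)$ in $Y$, and so that the family of fibers of $F$ restricted to $K$ is uniformly locally contractible with a single contractibility function in the sense of Definition \ref{defn: loc}. After this reduction we work inside $K$ over the finite-dimensional base $V$.

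The construction of $s_0$ and of each refinement $s_{n+1}$ from $s_n$ uses a nerve-of-cover argument that exploits the finite dimension of $Y$. Choose a locally finite open cover $\{V_\alpha\}$ of $V$ of mesh smaller than a controlled scale and of multiplicity at most $\dim(V) + 1$, which exists because $Y$ is finite-dimensional. For each $\alpha$ pick a point $y_\alpha \in V_\alpha$, and, using openness of $F$, a preimage $x_\alpha \in F^{-1}(y_\alpha)$ (taken close to $s_n(y_\alpha)$ in the refinement steps). Build a continuous map $\Phi$ from the geometric realization $|N|$ of the nerve into $X$ by sending the $0$-simplex $[\alpha]$ to $x_\alpha$ and extending skeleton by skeleton: a $k$-simplex $[\alpha_0, \dots, \alpha_k]$ is filled by contracting the already-constructed boundary inside a fiber $F^{-1}(y)$ for some $y \in V_{\alpha_0} \cap \dots \cap V_{\alpha_k}$, using uniform local contractibility of fibers at the appropriate scale. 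Composing with the barycentric map $V \to |N|$ coming from a partition of unity subordinate to $\{V_\alpha\}$ produces the approximate section at this stage.

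The main obstacle is the scale bookkeeping across the iteration: for the skeleton-by-skeleton extension to remain in the locally contractible regime, the cover must be chosen extremely fine, and the inductive refinement must simultaneously reduce the $F$-error and keep $d(s_n, s_{n+1})$ summable. The openness of $F$ and the precise form of Definition \ref{defn: loc} are tailored to enable this balance: openness ensures that the carrier $y \mapsto F^{-1}(y)$ is lower semicontinuous, so that the desired preimages $x_\alpha$ exist arbitrarily close to $s_n(y_\alpha)$, while uniform local contractibility of fibers provides the quantitative contractions bounding both the skeletal extension size and the displacement $d(s_n, s_{n+1})$. The bounded multiplicity of the cover ensures the skeletal extension terminates in at most $\dim(V)+1$ steps with controlled losses. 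Once the scales are arranged so that $\epsilon_n$ decays fast enough and $\sum_n d(s_n, s_{n+1}) < \infty$, the maps $s_n$ form a Cauchy sequence whose uniform limit $s$ is a continuous map on some open neighborhood $U$ of $F(x)$ with $F \circ s = \id_U$.
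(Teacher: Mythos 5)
The paper does not prove Theorem \ref{thm: selection}; it states it and cites it directly as a special case of E. Michael's continuous selection theorems, \cite[Theorem 1.2]{Michael2}, in the form presented in \cite[Theorem M]{Dyer}, and moves on. So there is no in-paper argument to compare against, and your plan is an attempt to reconstruct Michael's proof from first principles.

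Your architecture is the right one: localize to a compact piece via openness of $F$ and local compactness, build approximate sections over the nerve of a fine bounded-multiplicity cover (finite dimension of $Y$), iterate with summable corrections, and take a uniform limit. You also correctly identify why each hypothesis enters. But the crucial step --- filling a $k$-simplex ``by contracting the already-constructed boundary inside a fiber $F^{-1}(y)$'' --- does not work as stated. The image of $\partial[\alpha_0,\dots,\alpha_k]$ under the partial realization does not lie in any single fiber: already the vertices land in the distinct fibers $F^{-1}(y_{\alpha_0}),\dots,F^{-1}(y_{\alpha_k})$, so local contractibility of $F^{-1}(y)$ gives you nothing to contract. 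In Michael's actual argument, the boundary data is first homotoped into a small neighborhood of the chosen fiber using lower semicontinuity of the carrier $y\mapsto F^{-1}(y)$ (i.e.\ openness of $F$), and only then filled using the equi-$LC^n$ property; the quantitative compatibility of these two operations across scales is the real engine of the proof, not merely the ``scale bookkeeping'' you defer in your last paragraph. As written, the skeletal induction does not close, so there is a genuine gap precisely at the step you flag as the main obstacle.
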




 The following result is proved in \cite[Theorem 1]{Ungar}, see also  \cite{addis}  and \cite{ferry-anr}   for related statements.
\begin{thm} \label{thm: hurewicz}
Let $X,Y$ be finite-dimensional,   compact metric spaces and let $Y$ be an  $\ANR$.
Let $F:X\to Y$ be an open, surjective   map   with locally uniformly contractible fibers.
Then $F$ is a Hurewicz fibration.
\end{thm}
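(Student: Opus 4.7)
The plan is to reduce the claim to a local statement via Dold's local-to-global theorem for Hurewicz fibrations, and to verify the local statement using Michael's continuous selection theorem (Theorem \ref{thm: selection}) together with the uniform contractibility of the fibers.

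Dold's theorem says that a map over a paracompact base is a Hurewicz fibration whenever the base admits a numerable open cover whose members pull back to Hurewicz fibrations. Since $Y$ is compact metric, hence paracompact, it suffices to cover $Y$ by open sets $U$ such that $F\colon F^{-1}(U)\to U$ is a Hurewicz fibration.

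Fix $y_0\in Y$. The hypotheses of Theorem \ref{thm: selection} hold, so after shrinking to a small neighborhood $U$ of $y_0$, one obtains a continuous section $s\colon U\to X$ of $F$. To promote this to a local Hurewicz fibration, I would construct a continuous fiber-preserving homotopy $h\colon F^{-1}(U)\times I\to F^{-1}(U)$ with $h(\cdot,0)=\mathrm{id}$ and $h(x,1)=s(F(x))$, and with $F\circ h(x,t)=F(x)$ for all $t$. Every $x\in F^{-1}(U)$ must therefore be connected, within its own fiber, to the section value $s(F(x))$, continuously in $x$. Existence of such an $h$ follows from a second application of Michael's selection theorem, applied to the set-valued map sending $x$ to the space of paths in $F^{-1}(F(x))$ from $x$ to $s(F(x))$; this set-valued map has non-empty and contractible values in view of the uniform contractibility of the fibers, and is lower semicontinuous because $F$ is open, so continuous selections exist on a (possibly smaller) neighborhood $U$.

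With $s$ and $h$ in hand, the local Hurewicz property is verified by constructing a path-lifting function: given $(x,\gamma)$ with $\gamma\colon I\to U$ and $\gamma(0)=F(x)$, the lift $\tilde\gamma$ is assembled from the fiberwise correction $t\mapsto h(x,t)$ (which connects $x$ to $s(\gamma(0))$ inside the fiber over $\gamma(0)$) and the sectional path $t\mapsto s(\gamma(t))$, reparametrized so that $F\circ\tilde\gamma=\gamma$ and so that the continuous dependence on $(x,\gamma)$ is preserved. Compatibility with arbitrary test spaces $Z$ then gives the full homotopy lifting property. The main obstacle is the construction of the fiber-preserving deformation $h$: uniform contractibility of fibers provides contractions pointwise, but fusing them into a single continuous fiber-preserving homotopy requires the full strength of Michael's selection theorem, and it is here that the openness hypothesis on $F$ plays its essential role through the lower semicontinuity of the fiberwise path-space multifunction.
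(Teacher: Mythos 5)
Your reduction to a local statement via the local-to-global theorem for Hurewicz fibrations over a paracompact base, and the first use of Theorem \ref{thm: selection} to obtain a local section $s$, are sound. The argument breaks down at the construction of the fiber-preserving homotopy $h$. You apply Michael's selection theorem to the multifunction $x \mapsto \{\text{paths in } F^{-1}(F(x)) \text{ from } x \text{ to } s(F(x))\}$ and assert that it has non-empty contractible values ``in view of the uniform contractibility of the fibers.'' But the hypothesis of locally uniformly contractible fibers (Definition \ref{defn: loc}) is a purely local condition, uniform across nearby fibers: small pieces of a fiber contract inside slightly larger pieces. It does not say that the fibers themselves are contractible, or even connected. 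A product $F : Z \times K \to Z$ with $K$ a compact $\ANR$, say $K = \Sph^1$ or a two-point set, satisfies every hypothesis of Theorem \ref{thm: hurewicz}; yet for a disconnected $K$ the path multifunction has empty values, and for $K = \Sph^1$ its values have infinitely many components, so are not contractible. More to the point, the existence of a fiber-preserving homotopy from $\id$ to $s\circ F$ would force every fiber to deformation retract to a point; that is exactly the extra hypothesis in Theorem \ref{thm: localstatement} (Ungar's Theorem 2), and is absent from the statement you are asked to prove. You have, in effect, argued for a variant of the wrong theorem.

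There is a second, subtler problem in the final assembly even when all fibers are contractible. Concatenating $t \mapsto h(x,t)$ with $t \mapsto s(\gamma(t))$ yields a path whose image under $F$ is a reparametrization of $\gamma$ that is constant on an initial subinterval; no continuous reparametrization of $[0,1]$ can turn this into an exact lift of $\gamma$, since it would have to invert a non-injective map. Section-plus-vertical-retraction data of this kind gives directly only the weak (Dold) covering homotopy property, not the Hurewicz property, and bridging that gap requires a separate argument. The route actually taken in the selection-theoretic literature (Michael, Dyer--Hamstrom, Ungar) in the compact, possibly non-contractible-fiber setting is to use the selection theorem to produce a slicing structure --- a continuous $\phi : F^{-1}(y_0) \times U \to F^{-1}(U)$ with $F\phi(e,u)=u$ and $\phi(e,y_0)=e$ --- and then invoke the slicing-structure criterion for the covering homotopy property; this sidesteps both issues above.
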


In the locally compact case one can not expect that an open, surjective  map with locally uniformly contractible fibers
is  a Hurewicz fibration, as we see  by restricting a Hurewicz fibration to a complicated open subset.
However,
 the following result is
deduced in \cite[Theorem 2]{Ungar} from the selection theorem of Michael mentioned above.

\begin{thm} \label{thm: localstatement}
Let $X,Y$ be finite-dimensional locally compact metric spaces.
Assume that an open, surjective map $F:X\to Y$ has   locally uniformly continuous fibers.
If, in addition, all fibers $F^{-1} (y)$ of the map $F$ are contractible then $F$ is
  a Hurewicz fibration.
\end{thm}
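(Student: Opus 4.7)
The plan is to localize the problem using a theorem of Dold: a map that restricts to a Hurewicz fibration over each element of a numerable open cover of the base is itself a Hurewicz fibration. Since $Y$ is a metric space, hence paracompact, every open cover of $Y$ admits a subordinated numerable refinement, so it suffices to prove that every $y_0\in Y$ has an open neighborhood $U$ for which $F|_{F^{-1}(U)}\colon F^{-1}(U)\to U$ is a Hurewicz fibration. Once this local statement is established, the conclusion is immediate.

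Fix $y_0\in Y$. By Theorem \ref{thm: selection}, after possibly shrinking $U$, there exists a continuous section $s\colon U\to X$ of $F$. The central step is to construct a fiber-preserving strong deformation retraction
\[
H\colon F^{-1}(U)\times[0,1]\to F^{-1}(U),\qquad H(\cdot,0)=\mathrm{id},\quad H(\cdot,1)=s\circ F,
\]
satisfying $F\circ H(x,t)=F(x)$ for all $(x,t)$. Given such an $H$, the maps $s$ and $F|_{F^{-1}(U)}$ are inverse fiber-homotopy equivalences over $U$ between $F|_{F^{-1}(U)}\colon F^{-1}(U)\to U$ and $\mathrm{id}_U\colon U\to U$. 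The latter is trivially a Hurewicz fibration, and by Dold's theorem the property of being a Hurewicz fibration is invariant under fiber-homotopy equivalence, so $F|_{F^{-1}(U)}$ is a Hurewicz fibration, and the reduction is complete.

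The main obstacle is constructing $H$. The two available inputs are the pointwise contractibility of each fiber $F^{-1}(y)$ to the point $s(y)$, together with the local uniform contractibility of the family of fibers from Definition \ref{defn: loc}, which supplies the neighborhood-level control needed for a continuous choice of contraction across varying fibers. The natural approach is to invoke Michael-type selection arguments iteratively: cover $F^{-1}(U)$ by open sets on each of which a fiberwise partial contraction toward $s$ can be defined using local uniform contractibility, then assemble these into a global fiberwise homotopy through successive selections, with finite-dimensionality of $X$ permitting an induction over the levels of a suitable skeletal filtration of $F^{-1}(U)$. The delicate point is arranging the patching so that the resulting homotopy is genuinely fiber-preserving; once $H$ is in hand, the remainder of the argument is formal.
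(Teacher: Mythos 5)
Your reduction to a local statement via the numerable-cover uniformization theorem for Hurewicz fibrations is fine, and invoking Theorem~\ref{thm: selection} to get a local section $s$ is also sound. The argument breaks down at the final step, where you conclude that $F|_{F^{-1}(U)}$ is a Hurewicz fibration because it is fiber-homotopy equivalent to $\mathrm{id}_U$. Dold's theorem asserts that the \emph{weak} covering homotopy property (WCHP) is invariant under fiber-homotopy equivalence; it does \emph{not} say this for the genuine covering homotopy property. The two are genuinely different. A minimal example: let $X=\bigl([0,1]\times\{0\}\bigr)\cup\bigl(\{0\}\times[0,1]\bigr)\subset\R^2$ and $F(x,y)=x$, so $Y=[0,1]$. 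Then $s(t)=(t,0)$ is a section and $H\bigl((x,y),t\bigr)=\bigl(x,(1-t)y\bigr)$ is a fiber-preserving strong deformation retraction onto $s(Y)$, so $F$ is fiber-homotopy equivalent to $\mathrm{id}_{[0,1]}$. Nevertheless $F$ is not a Hurewicz fibration (the identity path in $[0,1]$ cannot be lifted starting at $(0,1)$). This map violates only the openness hypothesis of the theorem; since your argument never invokes openness after obtaining the section, it would, if it went through, prove a statement that this example refutes.

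There is a second, independent gap: the construction of the fiber-preserving strong deformation retraction $H$ is only gestured at. Patching local fiberwise partial contractions into a globally defined, globally fiber-preserving homotopy is not a routine exercise, and the difficulty is not cosmetic; it is precisely where the continuity-across-fibers information encoded in Definition~\ref{defn: loc} has to do real work. The paper avoids both problems by simply invoking Ungar's Theorem~2, whose actual mechanism (following Michael) is to build a continuous \emph{lifting function} directly: one forms a multi-valued map sending the relevant lifting data to the set of admissible lifts, uses openness to get lower semicontinuity, uses local uniform contractibility of the fibers and contractibility of the fibers to verify the equi-$LC^n$ and $C^n$ conditions required by the finite-dimensional Michael selection theorem, and selects a continuous lifting function. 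That directly yields the full covering homotopy property rather than only the WCHP.
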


\subsection{Fibrations and fiber bundles}
In some situations, Hurewicz fibrations turn out to be fiber bundles.  Most results  in this direction
are based  on the famous  $\alpha$-approximation theorem,  proved  by T. Chapman  and S. Ferry in dimensions $n\geq 5$, \cite{ChapmanFerry},
and  extended by S. Ferry and S. Weinberger to dimension $n=4$,  \cite[Theorem 4]{Ferry}, and by W. Jakobsche to dimensions $n=2,3$,   \cite{Jakobsche2}, \cite{Jakobsche}.
Note that the $3$-dimensional statement in \cite{Jakobsche} relies on the resolution of the  Poincar\'e conjecture. For $n=1$, the $\alpha$-approximation theorem  is rather clear.  

\begin{thm}  \label{thm: alpha}
Let the metric space $M$ be  a closed topological $n$-manifold.  For any  $\alpha >0$ there is some $\epsilon =\epsilon(M,\alpha) >0$ such that   for any closed topological  $n$-manifold $M'$  and any $\epsilon$-equivalence  $f:M'\to M$   there exists  a homeomorphism $f':M'\to M$ with $d(f,f')<\alpha$.
\end{thm}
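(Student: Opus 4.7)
The plan is to follow the original Chapman--Ferry strategy \cite{ChapmanFerry}, which reduces the $\alpha$-approximation theorem to a combination of controlled engulfing and the controlled $h$-cobordism theorem, supplemented by low-dimensional arguments when $n\leq 4$. The required $\epsilon = \epsilon(M,\alpha)$ will be produced by chasing the control parameters backwards through the steps below.

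The first step is to upgrade the $\epsilon$-equivalence $f\colon M'\to M$ to a map with controlled point-inverses. Using the $\epsilon$-equivalence property together with the local contractibility of $M$ and $M'$, both of which are ANRs, one homotopes $f$ through small tracks to a UV$^1$-map $\tilde f$ whose point-inverses are cell-like on a scale $\delta=\delta(\epsilon)$ with $\delta\to 0$ as $\epsilon\to 0$. The bound on the tracks of this homotopy is absorbed into the $\alpha$-budget; this part uses only general position and the ANR property and does not depend on the dimension.

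The second step is to form the mapping cylinder $W$ of $\tilde f$, which becomes a thin $s$-cobordism between $M'$ and $M$ in the sense of controlled surgery theory. The controlled $h$-cobordism theorem, in the form used in \cite{ChapmanFerry}, then asserts that for $n\geq 5$ and for control parameter $\delta$ small compared to $\alpha$, such an $s$-cobordism admits a product structure whose projection to $M$ is within $\alpha$ of $\tilde f$. Composing, one obtains the desired homeomorphism $f'\colon M'\to M$ with $d(f,f')<\alpha$. For $n=4$ the same outline goes through via Freedman's topological $4$-manifold theory, as in \cite{Ferry}; for $n=3$ via the Poincar\'e conjecture, as in \cite{Jakobsche}; and for $n\leq 2$ the statement is classical via Moore's decomposition theorem (Theorem~\ref{thm: bing} already reflects its consequences in our setting).

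The main obstacle will be the controlled $h$-cobordism theorem itself: the vanishing of the appropriate controlled Whitehead groups is the technical core, and is precisely where the dimension hypothesis $n\geq 5$ first enters. Once the controlled surgery machinery is accepted as a black box, the remaining work is bookkeeping about how the control estimates compose; the low-dimensional extensions, by contrast, require genuinely different inputs (topological $4$-manifold theory in dimension $4$, the geometrization in dimension $3$, and direct surface arguments in dimensions $\leq 2$).
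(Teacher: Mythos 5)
This theorem is not proved in the paper: it is stated as a citation of Chapman--Ferry \cite{ChapmanFerry} for $n\geq 5$, Ferry--Weinberger \cite{Ferry} for $n=4$, and Jakobsche \cite{Jakobsche2}, \cite{Jakobsche} for $n=2,3$. There is no proof in the paper against which to compare your sketch; the result is used as a black box.

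As an outline of the known proof, your account is reasonable in broad strokes but conflates two distinct arguments and is imprecise in a way worth flagging. The original Chapman--Ferry proof proceeds by a handle-by-handle controlled engulfing argument on a handle decomposition of $M$; the reduction to a thin ($\epsilon$-controlled) $h$-cobordism via the mapping cylinder, together with the thin $h$-cobordism theorem, is the later Quinn--Ferry reformulation, not Chapman--Ferry's route. More seriously, your first step asserts that one can homotope $f$ through small tracks to a map whose point-inverses are cell-like at scale $\delta(\epsilon)$. If that were available, the rest (for $n\geq 5$) would already follow from the cell-like approximation theorem of Siebenmann/Edwards, bypassing engulfing and controlled surgery entirely; in fact this upgrade cannot be carried out as stated, and it is precisely the content of the $\alpha$-approximation theorem that one need not pass through a CE map. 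In the thin $h$-cobordism approach the mapping cylinder of the controlled homotopy equivalence itself is the thin $h$-cobordism over $M$; no prior CE upgrade is performed. Finally, for $n\leq 2$ the theorem is not "classical via Moore's theorem": Moore's theorem (Theorem \ref{thm: bing} in the paper) says homology $2$-manifolds are topological manifolds, which is a different statement. The two-dimensional $\alpha$-approximation theorem is due to Jakobsche \cite{Jakobsche2} and still requires a genuine controlled argument.
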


This theorem combined with the fiber-bundle recognition developed in \cite{Dyer} implies 
 \cite[Theorems 1.1-1.4]{ferry-bundle}, \cite[Theorem 2]{Raymond-fibering}:

\begin{thm}  \label{thm: trivialbundle}
Let $X,Y$ be finite-dimensional locally compact $\ANR$s.
Let $F:X\to Y$ be a Hurewicz fibration. If all fibers of $F$ are closed $n$-manifolds
then $F$ is a locally trivial fiber bundle.
\end{thm}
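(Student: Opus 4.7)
My plan is to establish local triviality of $F$ at each point $y_0 \in Y$ by combining three ingredients: the fact that a Hurewicz fibration over a contractible base is fiber-homotopy trivial, the $\alpha$-approximation theorem (Theorem \ref{thm: alpha}), and the fiber-bundle recognition criterion of Dyer \cite{Dyer}. Fix $y_0 \in Y$ and set $M := F^{-1}(y_0)$, a closed topological $n$-manifold. The goal is to produce a neighborhood $V$ of $y_0$ and a homeomorphism $\Psi : F^{-1}(V) \to V \times M$ commuting with projection to $V$.

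First, since $Y$ is a finite-dimensional locally compact ANR, I would choose a contractible open neighborhood $U$ of $y_0$ with compact closure. The restriction $F|_{F^{-1}(U)} \to U$ is still a Hurewicz fibration, and over the contractible base $U$ standard obstruction theory shows that every Hurewicz fibration is fiber-homotopy trivial (see \cite[Chapter 4]{Hatcher}). This yields a fiber-preserving continuous map $\Phi : F^{-1}(U) \to U \times M$ whose restriction $\Phi_y : F^{-1}(y) \to M$ is a homotopy equivalence for every $y \in U$. Moreover, by building the trivialization via lifts of short paths from $y_0$ to $y$, one may arrange that the trivializing homotopies have uniformly small tracks as $y \to y_0$; after shrinking $U$, the $\Phi_y$ may therefore be made $\epsilon$-equivalences for any prescribed $\epsilon > 0$.

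Next I would invoke Theorem \ref{thm: alpha}: for $\alpha > 0$ fixed and $\epsilon < \epsilon(M,\alpha)$, each $\Phi_y$ is $\alpha$-close to some homeomorphism $\Phi_y' : F^{-1}(y) \to M$. The main obstacle is the final step: assembling the pointwise homeomorphisms $\Phi_y'$ into a single continuous trivialization $\Psi$ over a smaller neighborhood $V \subset U$ of $y_0$. The homeomorphisms produced by Theorem \ref{thm: alpha} are not canonical and depend on auxiliary choices, so a parametrized version of the $\alpha$-approximation is required. This is where the fiber-bundle recognition principle of Dyer \cite{Dyer} enters: it characterizes local fiber-bundle structures via the existence of suitably controlled fiberwise homotopy equivalences satisfying a Michael-type selection condition. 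The controlled topology of \cite{ChapmanFerry} underlying Theorem \ref{thm: alpha}, combined with Dyer's criterion and its elaborations in \cite{ferry-bundle}, \cite{Raymond-fibering}, produces the continuous family $\Psi$ realizing $F$ as a locally trivial fiber bundle.
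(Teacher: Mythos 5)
The paper does not give a proof of this theorem; it states it as a consequence of the $\alpha$-approximation theorem together with the fiber-bundle recognition machinery of Dyer, citing the actual proofs to \cite{ferry-bundle} and \cite{Raymond-fibering}. Your sketch follows the same strategy and invokes the same references, so the route you take is essentially the one the paper points to. However, you candidly flag the crux as a gap (``assembling the pointwise homeomorphisms $\Phi_y'$ into a single continuous trivialization'') and then wave at ``Dyer's criterion and its elaborations.'' It is worth naming what actually closes that gap, since otherwise the argument is circular -- you would be invoking the theorem you are trying to prove.

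The Dyer--Hamstrom mechanism works as follows. One considers the set-valued map $y \mapsto \mathrm{Homeo}(F^{-1}(y), M)$ and wants a continuous selection near $y_0$ via Michael's selection theorem. For this one needs two inputs. First, the map must be \emph{completely regular}: for nearby $y, y'$ there exist homeomorphisms $F^{-1}(y) \to F^{-1}(y')$ that are $\epsilon$-close to the ``identity'' in the total space. This is where the $\alpha$-approximation theorem (Theorem~\ref{thm: alpha}) is used: the fiber-homotopy trivialization over a small contractible base gives $\epsilon$-equivalences between nearby fibers, which $\alpha$-approximation promotes to near-identity homeomorphisms, so the fibers are all homeomorphic to $M$ and the map is completely regular. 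Second -- and this is the ingredient your sketch does not mention -- one needs the homeomorphism group $\mathrm{Homeo}(M)$ of the closed topological $n$-manifold $M$ to be locally contractible (at least $LC^k$ for $k \geq \dim Y$), so that the selection theorem applies. This is a deep theorem of Chernavskii and Edwards--Kirby and is precisely what makes ``Dyer's criterion'' operative for closed manifolds; without it, pointwise homeomorphisms cannot be integrated into a continuous family. With these two facts in place, the Michael selection produces the local trivialization $\Psi$, and the argument is complete. So your outline is correct and matches the paper's cited route, but the local contractibility of $\mathrm{Homeo}(M)$ is the non-trivial input you should state explicitly rather than leave implicit in a reference.
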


We will also apply the following local  variant of this global result proved in \cite[Proposition 4.2]{ferry-bundle}.
The case $n=3$, excluded in  \cite[Proposition 4.2]{ferry-bundle}, need not be excluded due to the solution of the Poincar\'e conjecture (and \cite{Jakobsche}):
\begin{thm} \label{thm: localproduct}
Let $F:X\to I$ be a Hurewicz fibration from a metric space $X$ to an open interval $I$.
 Assume that all fibers  are topological $n$-manifolds.
Then
$X$ is a topological  $(n+1)$-manifold.
\end{thm}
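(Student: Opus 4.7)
The plan is to produce a local product chart around each point of $X$ by combining the fiber-homotopy trivialization of $F$ over a small subinterval with a parametrized form of Theorem \ref{thm: alpha}. Fix $x_0 \in X$ and set $t_0 := F(x_0) \in I$ and $M := F^{-1}(t_0)$. Since $M$ is a topological $n$-manifold, choose a closed $n$-disk $D \subset M$ containing $x_0$ in its interior. The goal is then to find an open neighborhood of $x_0$ in $X$ homeomorphic to $\R^{n+1}$.

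First I would select a compact interval $J \subset I$ with $t_0$ in its interior. Since $J$ is contractible and $F : F^{-1}(J) \to J$ is a Hurewicz fibration, a standard path-lifting argument yields a fiber-preserving map $\Phi : M \times J \to F^{-1}(J)$ with $\Phi(\cdot, t_0) = \id_M$, together with a fiber-homotopy inverse. By uniform continuity of $\Phi$ on the compact set $D \times J$, after shrinking $J$ we may arrange that for every $s \in J$ the slice $\Phi_s := \Phi(\cdot, s) : D \to F^{-1}(s)$ is an $\epsilon$-equivalence in the sense of \cite{ChapmanFerry} onto its image, with $\epsilon$ arbitrarily small and all images lying in a fixed relatively compact neighborhood of $x_0$ in $X$.

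Next, a parametrized, controlled version of Theorem \ref{thm: alpha}, valid on compact pieces of topological $n$-manifolds, replaces the family $\{\Phi_s\}$ by a continuous family of honest homeomorphisms $\Phi'_s : D \to F^{-1}(s)$ on a smaller subinterval $J' \subset J$, with each $\Phi'_s$ uniformly close to $\Phi_s$; the $n=3$ case is legitimate via the Poincar\'e conjecture, as noted after Theorem \ref{thm: alpha}. Assembling these yields a continuous map $\Phi' : D \times J' \to X$, $(d,s) \mapsto \Phi'_s(d)$, which is globally injective because distinct slices land in distinct fibers of $F$; by invariance of domain applied to the $(n+1)$-dimensional source, $\Phi'$ restricts to an open embedding of the interior of $D \times J'$ onto a neighborhood of $x_0$ in $X$, yielding the desired Euclidean chart.

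The main obstacle is the parametrized step: a single-fiber application of Theorem \ref{thm: alpha} gives a homeomorphism for each $s \in J'$ but with no a priori continuity in $s$. One must either invoke a parametric Chapman--Ferry theorem, or build $\Phi'$ by iterative corrections over shrinking subintervals while using the fiber-homotopy flexibility of $\Phi$ to maintain compatibility at the overlaps. This parametric construction is the heart of \cite[Proposition 4.2]{ferry-bundle} and inherits the dimension restrictions from Theorem \ref{thm: alpha}.
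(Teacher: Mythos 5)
The paper itself does not prove this statement; it is cited directly to \cite[Proposition~4.2]{ferry-bundle} (with the remark that $n=3$, excluded there, is now admissible via the Poincar\'e conjecture). Your sketch tries to reconstruct that argument, but it is circular at its core: you correctly identify the crux --- promoting the controlled family $\{\Phi_s\}$ to a \emph{continuous} family of embeddings $\{\Phi'_s\}$ --- and then resolve it by appealing to ``the heart of \cite[Proposition~4.2]{ferry-bundle}.'' Since that proposition \emph{is} the theorem at hand, this is not a proof but a partial unpacking of the cited result. Note also that Theorem~\ref{thm: alpha} is stated for \emph{closed} manifolds; applying any version of it to the disk $D$ and to maps $\Phi_s:D\to F^{-1}(s)$ with possibly non-compact, non-surjective targets requires a relative controlled statement with boundary control, which is considerably more delicate and is not addressed.

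Separately, the final step contains a genuine error: invariance of domain cannot be ``applied to the source.'' The theorem asserts that a continuous injection of an open subset of $\R^{m}$ into an $m$-\emph{manifold} has open image; the hypothesis on the target is essential. Here the target is $X$, and that $X$ is an $(n+1)$-manifold is exactly the conclusion you are trying to establish. What you do have is fiberwise invariance of domain --- each $\Phi'_s(\mathrm{int}\,D)$ is open in the $n$-manifold $F^{-1}(s)$ --- but a union of fiberwise-open sets over a parameter interval need not be open in the total space: a point $z$ arbitrarily close to $x_0$ in $X$ could a priori fail to lie in $\Phi'_{F(z)}(D)$. Showing that the image of $\Phi'$ is a neighborhood of $x_0$ requires additional input from the fibration structure (for instance, using the fiber-homotopy inverse of $\Phi$ together with the openness of $F$ to control such points), and this nontrivial step must be supplied rather than attributed to invariance of domain.
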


\subsection{Fibrations and homology manifolds} Finally, we will use
the following result  proved by F. Raymond   in \cite[Theorem 1]{Raymond-fibering}.
Relying on the  the local orientability of homology $n$-manifolds \cite{Bredon},  Raymond's Theorem 1 can be slightly strengthened as explained in  \cite[p.52-53]{Raymond-fibering}.
(The result will be used   only for  Euclidean balls $Y$).
\begin{thm} \label{thm: raymond}
Let $X$ be a homology $n$-manifold and let  $F:X\to Y$ be a Hurewicz fibration.
If $Y$ is connected and  locally contractible then there exists some $k\leq n$ such that  any fiber of $F$ is a homology $(n-k)$-manifold and $Y$ is a homology $k$-manifold.
\end{thm}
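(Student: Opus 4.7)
The plan is to localize the Hurewicz fibration $F$ at an arbitrary point $x\in X$ and to compute the local homology $H_*(X,X\setminus\{x\})$ via a K\"unneth-type decomposition, then invoke the homology manifold hypothesis on $X$ to pin down the factors. Fix $y=F(x)\in Y$ and $L=F^{-1}(y)$. Using that $Y$ is locally contractible, I would choose a neighborhood basis $V_1\supset V_2\supset\cdots$ of $y$ consisting of open sets each contractible within the next. Since $F$ is a Hurewicz fibration and the sets $V_i$ are contractible, the restrictions $F^{-1}(V_i)\to V_i$ are fiber-homotopy equivalent to the trivial bundles $V_i\times L\to V_i$. I would select these trivializations compatibly with the inclusions $V_{i+1}\subset V_i$ and so that $x$ corresponds to $(y,x)$, in order to identify each pair $(F^{-1}(V_i),F^{-1}(V_i)\setminus\{x\})$ with $(V_i\times L,(V_i\times L)\setminus\{(y,x)\})$ up to homotopy equivalence of pairs.

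The next step is to apply the K\"unneth formula for relative homology to these product pairs. Since $(V_i\times L)\setminus\{(y,x)\}$ equals $(V_i\setminus\{y\})\times L\,\cup\,V_i\times(L\setminus\{x\})$, a Mayer--Vietoris argument yields
\[
H_n\bigl(V_i\times L,(V_i\times L)\setminus\{(y,x)\}\bigr) \;\cong\; \bigoplus_{p+q=n} H_p(V_i,V_i\setminus\{y\})\otimes H_q(L,L\setminus\{x\})
\]
up to Tor correction terms. Passing to the direct limit over $i$ and using excision, the left-hand side becomes $H_n(X,X\setminus\{x\})$ while the $V_i$-factor becomes $H_*(Y,Y\setminus\{y\})$. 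Since $X$ is a homology $n$-manifold, the left-hand side equals $\Z$ in degree $n$ and vanishes otherwise; the K\"unneth splitting then forces both $H_*(Y,Y\setminus\{y\})$ and $H_*(L,L\setminus\{x\})$ to be concentrated in single degrees $k(y)$ and $n-k(y)$ respectively, each equal to $\Z$.

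It remains to show that $k(y)$ is constant in $y$. Path-lifting in the Hurewicz fibration over the path-connected $Y$ provides homotopy equivalences between any two fibers, so all fibers have isomorphic global homology; combined with the local triviality over contractible sets above, this forces $y\mapsto k(y)$ to be locally constant and hence constant on the connected $Y$. One concludes that $Y$ has the local homology of $\R^k$ at every point and each fiber has the local homology of $\R^{n-k}$, so $Y$ is a homology $k$-manifold and every fiber a homology $(n-k)$-manifold (finite dimensionality of $L$ follows from $\dim L\le \dim X=n$). The main obstacle is the careful execution of the K\"unneth step: the fiber-homotopy trivialization over $V_i$ is defined only up to homotopy and does not automatically restrict well to the complement of a point, so one must arrange the trivializations to fix $x$ compatibly across the nested neighborhoods $V_i$. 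The cleanest route, essentially Raymond's, bypasses this by working with \v{C}ech cohomology and the Leray spectral sequence of $F$, whose Leray sheaf is locally constant with stalks $H^*(L)$; this computes $H^*(X,X\setminus\{x\})$ in terms of $H^*(Y,Y\setminus\{y\})$ and $H^*(L)$ while handling the direct-limit procedure automatically and avoiding explicit homotopies of pairs.
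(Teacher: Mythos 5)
The paper does not give a proof of this theorem: it is cited directly from Raymond's paper \cite[Theorem 1]{Raymond-fibering}, with a remark that the local orientability of homology manifolds allows a slight strengthening. So the relevant question is whether your sketch could serve as a proof, and here the gap you flag at the end is genuine and is not the only one. A fiber-homotopy trivialization of the Hurewicz fibration over the contractible set $V_i$ gives a homotopy equivalence of total spaces $F^{-1}(V_i)\simeq V_i\times L$, but the inverse and the homotopies need not avoid the distinguished point, so this does not yield a homotopy equivalence of pairs $(F^{-1}(V_i),F^{-1}(V_i)\setminus\{x\})\simeq (V_i\times L,(V_i\times L)\setminus\{(y,x)\})$, and there is no obvious way to choose the trivializations compatibly across the nested $V_i$. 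Even granting the K\"unneth identification, the ``concentration'' step needs more care: $\Z$ in a single degree on the left does not immediately force both factors to be $\Z$ in single degrees, since one must control Tor terms and rule out, e.g., torsion-only local homology on one side whose tensor with the other vanishes (Raymond handles this by working with field coefficients and exploiting Poincar\'e duality). Likewise, global homotopy equivalence of fibers given by path-lifting preserves global but not local homology, so local constancy of $k(y)$ requires an additional argument. Finally, the definition of homology manifold used here demands finite covering dimension of $Y$, which you do not address.

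Your final paragraph correctly identifies the fix: Raymond's actual proof works with \v{C}ech cohomology, the Leray sheaf of the fibration, and the Leray spectral sequence, together with Poincar\'e duality for cohomology manifolds. This circumvents the need for any compatible pointed trivialization and handles the limit over neighborhoods, the coefficient bookkeeping, and local constancy all at once. Since the paper only cites Raymond's theorem, matching Raymond's strategy is precisely the intended route; your naive K\"unneth sketch should be read only as motivation for why the spectral-sequence formalism is needed.
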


\section{Strainer maps} \label{sec: strainer}
\subsection{Basic properties}
We  recall the basic properties of strainer maps in GCBA spaces, a tool invented in \cite{BGP} for Alexandrov space, and  applied to GCBA and investigated in this context in \cite{LN}.  We are not going  to recall the exact definition but state instead the
properties of strainer maps which will be used below.


Let $O$ be a tiny  ball   of a GCBA space $X$.
For  any natural $k\geq 0$, and any $\delta >0$ there is the family $\mathcal F_{k,\delta} =\mathcal F_{k,\delta} (O)$ of  the so-called \emph{$(k,\delta)$-strainer maps}
$F:U\to \R^k$ defined on open subsets $U$ of $O$ with the following properties, \cite[Sections 7, 8]{LN}.

(0) By convention, for $k=0$, any $\delta >0$ and any open $U\subset O$,
 we let the constant map $F:U\to  \{0 \} =\R^0$ be a $(0,\delta)$-strainer map.


(1)  For any $F\in \mathcal F_{k,\delta} (O)$, the coordinates $f_i$ of $F$ are  distance functions to some points $p_1,...,p_k \in O$.

(2)  For $\delta _1 >\delta  _2$,  we have the inclusion $\mathcal F_{k,\delta _2} (O) \subset\mathcal F_{k,\delta _1} (O)$.

(3)  For any  $F\in \mathcal F_{k,\delta} (O)$ and $l<k$,  the first $l$ coordinate functions of  $F:U\to \R^k$
define a map $\tilde F:U \to \R^l$ contained in  $\mathcal F_{l,\delta} (O)$.

(4) The restriction of any $(k,\delta)$-strainer map to any open subset is a $(k,\delta)$-strainer map.

\subsection{Extension properties} \label{subsec: extension}
All  extendability properties of strainer maps and the "largeness" of the sets $\mathcal F_{k,\delta }(O)$
depend on the following:

(5)  For any $x\in X$ and any $\delta >0$, there exists some $r>0$ such that $d_x:B_r ^{\ast} (x)\to \R$ is contained in $\mathcal F_{1,\delta}$; \cite[Proposition 7.3]{LN}.

This result  has  the following generalization, \cite[Proposition 9.4]{LN}.


(6)  Let $F:U\to \R^k$ be a map in $\mathcal F_{k,\delta}$.  Let $x\in U$ be a point and let $\Pi$ be the fiber $F^{-1} (F(x))$.      Then, there is $r>0$ and an  open set $V\subset U$ containing $B_r^{\ast} (x) \cap \Pi$,  such that
the map $\hat F=(F, f) :V\to \R^{k+1}$ with last coordinate $f=d_x$ is  contained in  $\mathcal F_{k+1,12\cdot \delta}$.

 This property (6) is the "fiber-wise" statement of the following closely related result, contained  in \cite[Theorem 10.5]{LN} in a stronger form:

(7)    Let   $F:U\to \R^k$  be in $\mathcal F_{k,\delta}$.  Consider the set $K$ of points $x\in U$ at which
$F$ can not be locally extended to a $(k+1,12\cdot \delta)$-map $\hat F=(F,f) :U_x \to \R^{k+1}$. Then the closed set  $K$
 intersects  any fiber of $F$ in $U$ in a finite set of points.  

\subsection{Topological properties}
The following 
property
 is contained in \cite[Theorem 1.10]{LN}:

(8) Let $F:U\to \R^k$ be a map in $\mathcal F_{k,\delta}$ with  $\delta < \frac 1 {20 \cdot k}$. Then the map $F$
is open. Moreover, for  any compact subset $K$ of $U$, there exists some $\epsilon >0$ such that for all $r<\epsilon$
and all $x\in K$ the intersection $B_r(x) \cap F^{-1} (F(x))$ is contractible.

Now we easily derive:

\begin{thm} \label{thm: fibration}
	Let $U$ be an open subset of a GCBA space  $X$. Let $F:U\to \R^k$ be a $(k,\delta)$-strainer map, for some  $k$ and
	any $\delta <\frac {1} {20 \cdot k}$.
	
	Then any $x\in U$ has arbitrary small  open contractible neighborhoods $V$, such that the restriction $F:V\to F(V)$ is a Hurewicz fibration with contractible fibers.
	
	If a fiber $F^{-1}  (b)$ is compact then there exists an open neighborhood $V$ of $F^{-1} (b)$ in $U$ such  that
	$F:V\to F(V)$ is a Hurewicz fibration.
\end{thm}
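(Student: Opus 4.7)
The plan is to combine property (8) of strainer maps with the general Hurewicz fibration recognition theorems of Section \ref{sec: prel2}: namely, Theorem \ref{thm: localstatement} for the local statement, and Theorem \ref{thm: hurewicz} (via a compactness reduction) for the compact-fiber case. For the local statement, fix $x \in U$. Since $U$ lies in a $\CAT(\kappa)$ tiny ball, arbitrarily small metric balls $B_r(x)$ are open and contractible, so I would take $V = B_r(x)$ for $r$ sufficiently small. Property (8), applied to a small compact neighborhood $K$ of $x$, supplies openness of $F|_V$ and a uniform radius $\epsilon > 0$ such that $B_\rho(y) \cap F^{-1}(F(y))$ is contractible for every $y \in K$ and every $\rho < \epsilon$. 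Reading this against Definition \ref{defn: loc} shows that the fibers of $F|_V$ are locally uniformly contractible. It remains to argue that each nonempty fiber $V \cap F^{-1}(b)$ is itself contractible; once this is in place, Theorem \ref{thm: localstatement} applies and yields that $F|_V : V \to F(V)$ is a Hurewicz fibration with contractible fibers.

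For the compact-fiber case, assume $F^{-1}(b)$ is compact. I would cover this fiber by finitely many open contractible neighborhoods $V_1, \dots, V_m$ produced by the local statement, choose an open neighborhood $W$ of $b$ contained in $\bigcap_i F(V_i)$, and set $V = F^{-1}(W) \cap \bigcup_i V_i$ — an open neighborhood of $F^{-1}(b)$ in $U$. Shrinking $W$ further to arrange that $\bar V$ is compact in $U$, I would then apply Theorem \ref{thm: hurewicz} to a suitable compact approximation of $F|_V$, using openness of $F|_V$ and the locally uniformly contractible fibers supplied by (8), and transfer the resulting Hurewicz fibration property back to the open restriction $F|_V : V \to F(V)$.

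The main obstacle is the first step: property (8) supplies contractibility only of the \emph{small local pieces} $B_\rho(y) \cap F^{-1}(F(y))$ of a fiber, not of the full fiber $V \cap F^{-1}(b)$ cut out by the ball $V$. Establishing that for $r$ small enough this fiber is itself contractible — for instance by showing that it is a single connected, locally contractible piece that deformation-retracts onto a contractible neighborhood produced by (8) — is the key geometric input, and will likely require combining properties (6) and (7), which allow local extension of strainer maps, with the geodesic control available in a tiny $\CAT(\kappa)$ ball.
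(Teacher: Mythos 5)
The local statement is where your proposal genuinely breaks down, and you have correctly located the gap but not the tool that closes it. Taking $V = B_r(x)$ does not work: property (8) only guarantees contractibility of sets of the form $B_\rho(y) \cap F^{-1}(F(y))$, i.e.\ balls-in-the-fiber \emph{centered at a point of that fiber}. For $b \ne F(x)$ the set $B_r(x) \cap F^{-1}(b)$ is not of this form — there is no distinguished center on the fiber $F^{-1}(b)$ provided by the metric ball — and there is no reason for it to be contractible, or even connected. Properties (6) and (7), which you propose to invoke, concern extending the strainer map by a further coordinate and control the singular set $K$; they do not produce a center point on each nearby fiber, so they do not close this gap.

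The ingredient the paper actually uses, and which you omit, is Michael's continuous selection theorem, Theorem \ref{thm: selection}, which is precisely why it is stated in Section \ref{sec: prel2}. Given openness and locally uniformly contractible fibers from (8), Theorem \ref{thm: selection} produces a neighborhood $W$ of $F(x)$ and a continuous section $s : W \to U$ with $F \circ s = \mathrm{id}$. One then shrinks $W$ to an open ball with $s(W)$ inside a compact $K \subset U$, takes $\epsilon$ from (8) small enough that $B_\epsilon(K)$ stays in $U$, and sets
$V = \{\, z \in F^{-1}(W) : d\bigl(z, s(F(z))\bigr) < \epsilon \,\}$.
This $V$ is open, contains $x$, satisfies $F(V) = W$, and — crucially — each fiber $F^{-1}(t) \cap V$ equals $B_\epsilon(s(t)) \cap F^{-1}(t)$, which \emph{is} of the form covered by (8) and hence contractible. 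Now Theorem \ref{thm: localstatement} applies, and contractibility of $V$ itself follows \emph{from} the fibration (contractible base, contractible fibers), rather than from $V$ being a metric ball. Your compact-fiber argument is closer to the paper's, which directly intersects a compact neighborhood $V_0$ of $F^{-1}(b)$ with $F^{-1}(B)$ for a small closed ball $B$ around $b$ avoiding $F(\partial V_0)$ and applies Theorem \ref{thm: hurewicz}; your covering-by-local-$V$'s reduction could be made to work but is not needed.
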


\begin{proof}
By the property (8) of strainer maps, the map $F$ is open and it has locally uniformly contractible fibers.

Let $x\in U$ be arbitrary. Using Theorem \ref{thm: selection} we find a neighborhood  $W$ of $F(x)$ in $\R^k$
and  a continuous section $s:W\to U$ such that  $F\circ s$ is the identity. Making $W$ smaller, if needed, we may assume
that $W$ is an open ball and $s(W)$ is contained in a compact subset $K\subset U$.

Take a positive number $\epsilon $ provided by the property (8). Making $\epsilon $ smaller, we may assume that the
distance from $K$ to the boundary of $U$ in $X$ is larger than  $\epsilon$.   Consider the set $V\subset U$ (the union of balls-in-the-fiber of radius $\epsilon$).
 of all
$z\in F^{-1}(W)$ such that $d(z, s(F(z)))<\epsilon$.

Then $V$ is open in $U$ and  contains $x$.  We have $F(V)=W$ and every fiber $F^{-1} (\mathfrak t) \cap V$ of $F$  in  $V$ is a contractible.
Applying Theorem \ref{thm: localstatement} we see that $F:V\to F(V)$ is a Hurewicz fibration. Since $W=F(V)$ and the fibers of the Hurewicz fibration $F:V\to W$ are contractible, $V$ is contractible as well.

Let now $F^{-1} (b)$ be a compact fiber of $F$ in $U$. Take a compact neighborhood $V_0$ of $F^{-1} (b)$ in $U$.
Let $C$ be its boundary $\partial V_0$. Consider a closed ball $B$  around $b$ which is contained in the neighborhood
$F(V_0)$ of $b$ but  does not intersect the compact image
$F(C)$.     Let $V _1$ be the intersection $V_0\cap F^{-1} (B)$.

Since $F^{-1} (B)$ does not intersect $C$, the set $V_1$ is compact.   The restriction $F:V_1\to B$ has locally uniformly contractible fibers.  Applying Theorem \ref{thm: hurewicz} we deduce that $F:V_1\to B$ is a Hurewicz fibration.    If we take $W$ to be any open ball around $b$ contained in $B$ and let $V$ be the preimage
$F^{-1} (B)  \cap V_1$ then $F:V\to B$ is a Hurewicz fibration as well.  This finishes the proof.
\end{proof}

Since being  a homology $k$-manifold is a local property,  we directly deduce from Theorem  \ref{thm: fibration}  and
 Theorem \ref{thm: raymond}:

\begin{cor}  \label{lem: admiss}
Let  $F:U\to \R^k$ be a $(k,\delta)$-strainer  map
  with $\delta < \frac 1 {20 \cdot k}$  defined on an open subset
  of a GCBA space $X$.
   If  $U$ is a homology $n$-manifold  then any  non-empty fiber    $\Pi$ of $F$ is a homology $(n-k)$-manifold.
\end{cor}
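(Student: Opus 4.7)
The plan is to combine Theorem \ref{thm: fibration} with Theorem \ref{thm: raymond}, applied in a sufficiently small neighborhood of each point of the fiber $\Pi$. Being a homology $m$-manifold is a local property, so it is enough to show that around every $x\in\Pi$ there exists an open neighborhood $W$ of $x$ in $\Pi$ which is a homology $(n-k)$-manifold.

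Fix $x\in\Pi$. By Theorem \ref{thm: fibration}, there is an arbitrarily small open contractible neighborhood $V$ of $x$ in $U$ such that the restriction $F\colon V\to F(V)$ is a Hurewicz fibration with contractible fibers. Since $V$ is open in the homology $n$-manifold $U$, it is itself a homology $n$-manifold. By property (8) of strainer maps, $F$ is open, so $F(V)$ is an open subset of $\mathbb{R}^k$; after shrinking $V$ we may assume that $F(V)$ is a connected open ball. In particular, $F(V)$ is locally contractible, which is the hypothesis needed to apply Theorem \ref{thm: raymond}.

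Applying Theorem \ref{thm: raymond} to the Hurewicz fibration $F\colon V\to F(V)$, we obtain an integer $k'\le n$ such that every fiber of $F|_V$ is a homology $(n-k')$-manifold and $F(V)$ is a homology $k'$-manifold. Since $F(V)$ is an open subset of $\mathbb{R}^k$, its local homology coincides with that of $\mathbb{R}^k$, forcing $k'=k$. Consequently $\Pi\cap V=(F|_V)^{-1}(F(x))$ is a homology $(n-k)$-manifold, and it is an open neighborhood of $x$ in $\Pi$. As $x\in\Pi$ was arbitrary, $\Pi$ is locally a homology $(n-k)$-manifold, hence a homology $(n-k)$-manifold. (Local compactness, separability and finite topological dimension of $\Pi$ are inherited from $U$ since $\Pi$ is closed in $U$.)

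The only nontrivial step is matching the dimension $k'$ produced by Theorem \ref{thm: raymond} with our $k$; this is exactly where the openness of strainer maps (property (8)) enters, guaranteeing that the base $F(V)$ is genuinely $k$-dimensional rather than collapsed. Everything else is a direct application of the two quoted theorems.
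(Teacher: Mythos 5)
Your proposal is correct and is exactly the expansion of the paper's one-line argument, which deduces the corollary ``directly from Theorem \ref{thm: fibration} and Theorem \ref{thm: raymond}.'' The small finesse with ``shrinking $V$'' to make $F(V)$ a ball is unnecessary: since $V$ is contractible, $F(V)$ is connected, and as an open subset of $\R^k$ it is locally contractible, which is already enough for Theorem \ref{thm: raymond}; the identification $k'=k$ then follows as you say because an open subset of $\R^k$ is a homology $k$-manifold and the dimension of a homology manifold is unique.
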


\section{Topological regularity} \label{sec: topreg}
\subsection{Disjoint disk property}

A metric space $M$
has the {\em disjoint disk property}
if
for any two continuous maps $\varphi_i \colon D \to M$, $i = 1, 2$, on the unit disk $D$ 
and for any $\epsilon > 0$,
there are two continuous maps  $\widetilde{\varphi}_i \colon D \to M$
such that
$d(\varphi_i,\widetilde{\varphi}_i) \le \epsilon$
and 
$\widetilde{\varphi}_1(D) \cap \widetilde{\varphi}_2(D) =\emptyset$.

For a homology $n$-manifold $Y$ we  denote by
 $\mathcal M(Y)$ the set of manifold points in $Y$, thus of all points  in $Y$ with a neighborhood homeomorphic to $\R^n$.
We recall the following special case of the celebrated manifold recognition theorem of Edward--Quinn,
 \cite[Theorem 2.7]{Mio}:

\begin{thm} \label{thm:  ddp}
Let the connected metric space $Y$ be an $\ANR$ and   a  homology $n$-manifold with $n\geq 5$. Then $Y$ is
a topological manifold if and only if the set of manifold points $\mathcal M (Y)$ is not empty and $Y$ has the disjoint disk property.
\end{thm}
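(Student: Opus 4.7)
The plan is to assemble the proof from the two deep pillars of high-dimensional manifold topology: Quinn's resolution theorem for $\ANR$ homology manifolds and Edwards' cell-like approximation theorem. Since this is the celebrated Edwards--Quinn recognition theorem, the argument really amounts to a careful invocation of these machines, as presented in \cite{Mio}.

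The forward direction is elementary. For $n \geq 5$, any two continuous maps $\varphi_i \colon D \to \R^n$ can be approximated arbitrarily well by piecewise linear (or smooth) maps, and by general position these can be made disjoint since $2 \cdot \dim D = 4 < n$. Transporting this through local charts shows that every topological $n$-manifold has the disjoint disk property, and trivially $\mathcal M(Y) = Y \neq \emptyset$.

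For the converse, the first step is to invoke Quinn's resolution obstruction $I(Y) \in 1 + 8\Z$, defined for any connected $\ANR$ homology $n$-manifold with $n \geq 5$. Quinn showed that $I$ is locally constant on connected components and that $I$ takes the value $1$ on any open subset which is a topological manifold. Since $Y$ is connected and $\mathcal M(Y) \neq \emptyset$, one concludes $I(Y) = 1$. Quinn's resolution theorem then produces a proper cell-like surjection $f \colon M \to Y$ from a topological $n$-manifold $M$ onto $Y$. At this stage $Y$ is known to be a cell-like image of a manifold, but need not itself be a manifold without further hypotheses.

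The second and final step is to apply Edwards' cell-like approximation theorem in its sharp form: if $M$ is a topological $n$-manifold with $n \geq 5$, if $Y$ is a finite-dimensional $\ANR$ satisfying the disjoint disk property, and if $f \colon M \to Y$ is a proper cell-like surjection, then $f$ can be approximated arbitrarily closely by homeomorphisms $f' \colon M \to Y$. In particular $Y$ is homeomorphic to $M$ and hence a topological manifold, completing the proof. The main obstacle is of course that both the resolution theorem and the cell-like approximation theorem are themselves among the deepest results of geometric topology; a self-contained proof is out of reach, and the role of the disjoint disk property here is precisely to feed into Edwards' theorem to promote the cell-like resolution into an actual homeomorphism.
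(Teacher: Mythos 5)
The paper does not prove this theorem; it cites it directly from \cite[Theorem 2.7]{Mio} as a special case of the Edwards--Quinn manifold recognition theorem. Your sketch correctly reproduces the standard argument behind that citation --- trivialize Quinn's resolution obstruction using the manifold point and local constancy, obtain a cell-like resolution, then apply Edwards' cell-like approximation theorem via the disjoint disk property --- so it matches the intended source.
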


For $n\geq 5$ the next result  easily follows from Theorem \ref{thm:  ddp} and is  a very special case of the main theorem of \cite{CBL}.
For $n=4$, the next result is a very special case of the main theorem of \cite{BestvinaDav}.
\begin{thm} \label{thm: bcl}
	Let $Y$ be an $\ANR$ and a homology $n$-manifold with $n\geq 4$. Let 
	$K\subset Y$ be a discrete set of points such that $Y\setminus K$ is a topological $n$-manifold. If every point $x\in K$ has  arbitrary small neighborhoods $U$ in $Y$ such that $U\setminus \{x\}$ is simply connected then $Y$ is a topological $n$-manifold.
\end{thm}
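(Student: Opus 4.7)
\emph{Plan.} My plan is to reduce to Theorem~\ref{thm:  ddp} in the case $n \ge 5$, and to cite~\cite{BestvinaDav} for $n = 4$. In the range $n \ge 5$ the task is to verify the two hypotheses of Theorem~\ref{thm:  ddp}: non-emptiness of $\mathcal{M}(Y)$, and the disjoint disk property for $Y$. The first is immediate, since $Y \setminus K$ consists entirely of manifold points and is non-empty.

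For the disjoint disk property, fix continuous maps $\varphi_1,\varphi_2 \colon D \to Y$ and $\epsilon > 0$. The strategy has two stages: first $\epsilon/2$-perturb each $\varphi_i$ so that its image lies in the topological manifold $Y \setminus K$; then apply the disjoint disk property already available in any topological $n$-manifold with $n \ge 5$ (the classical topological general-position fact, since $2+2 < n$) to produce further $\epsilon/2$-perturbations with disjoint images inside $Y \setminus K$.

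The push-off in the first stage is where the local simple-connectedness hypothesis enters. Since $K$ is locally finite in $Y$ and each $\varphi_i(D)$ is compact, only finitely many points $x_1,\dots,x_m \in K$ lie in $\varphi_1(D) \cup \varphi_2(D)$. Around each $x_j$ I would choose pairwise disjoint open neighborhoods $U_j \subset Y$ of diameter at most $\epsilon/4$ with $U_j \setminus \{x_j\}$ simply connected. For each $i$ and $j$, cover the compact preimage $\varphi_i^{-1}(x_j) \subset D$ by finitely many pairwise disjoint closed topological $2$-disks $A_{ij,k} \subset D$ whose images under $\varphi_i$ lie in $U_j$ and whose boundary circles map into $U_j \setminus \{x_j\}$. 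Using $\pi_1(U_j \setminus \{x_j\}) = 0$ together with the fact that $U_j \setminus \{x_j\}$ is open in the $\ANR$ $Y$, extend each boundary map over the corresponding disk into $U_j \setminus \{x_j\}$; this redefines $\varphi_i$ on $A_{ij,k}$ so as to avoid $x_j$, and since the new image stays inside $U_j$ the modification is $\epsilon/2$-close to $\varphi_i$.

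The main technical obstacle is ensuring that the various push-offs do not interfere: disjointness of the $U_j$ (from local finiteness of $K$) and disjointness of the $A_{ij,k}$ in $D$ (from compactness and continuity of the $\varphi_i$) make the finitely many modifications independent, and the two maps $\varphi_1,\varphi_2$ can be perturbed separately before the general-position step. For $n = 4$, Theorem~\ref{thm:  ddp} is unavailable, so I would invoke~\cite{BestvinaDav} directly: its main theorem recognizes $\ANR$ homology $4$-manifolds with a discrete singular set satisfying precisely the local simple-connectedness condition assumed here as topological manifolds, which closes the remaining case.
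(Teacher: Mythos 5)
Your overall strategy agrees with the paper's: for $n\geq 5$ the paper remarks that the result ``easily follows from Theorem~\ref{thm:  ddp}'' (while officially deferring to the main theorem of~\cite{CBL}), and for $n=4$ it cites~\cite{BestvinaDav}, exactly as you do. The only part of your writeup that needs scrutiny is the push-off argument, and there is a genuine gap in one step: you assert that the compact preimage $\varphi_i^{-1}(x_j)$ can be covered by finitely many \emph{pairwise disjoint} closed $2$-disks $A_{ij,k}$ contained in $\varphi_i^{-1}(U_j)$. This is false in general. For instance, $\varphi_i$ may collapse an embedded circle $C\subset\mathrm{int}\,D$ to the point $x_j$, and for $U_j$ small the open set $\varphi_i^{-1}(U_j)$ can be a thin annular neighborhood of $C$; any disjoint family of closed disks in $D$ covering the connected set $C$ would have to contain $C$ in a single disk $A$, but a disk $A\subset\varphi_i^{-1}(U_j)$ containing $C$ would make $C$ null-homotopic in the annulus, which is impossible. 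So the claimed covering need not exist, and as written the construction does not go through.

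The gap is repairable and does not affect the viability of the plan. Instead of insisting on a disjoint union of disks, take a compact planar surface-with-boundary $N_j\subset\varphi_i^{-1}(U_j)$ (e.g., a regular neighborhood, or the appropriate union of simplices in a fine triangulation of $D$) whose interior contains $\varphi_i^{-1}(x_j)$. Then extend $\varphi_i|_{\partial N_j}$ across $N_j$ by skeleta into $U_j\setminus\{x_j\}$: edges are filled using connectedness of $U_j\setminus\{x_j\}$, and $2$-cells are filled using $\pi_1(U_j\setminus\{x_j\})=0$ — this is exactly the obstruction that simple connectedness kills, so no disk decomposition of $N_j$ is needed. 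The modified map stays within $U_j$ (hence is $\epsilon/2$-close), avoids $x_j$, and since the $U_j$ are pairwise disjoint and each meets $K$ only in $x_j$, the finitely many modifications are independent. After that your appeal to general position for $2$-disks in the manifold $Y\setminus K$ (valid since $2+2<n$ when $n\geq 5$) and to Theorem~\ref{thm:  ddp} is fine; for $n=4$, citing~\cite{BestvinaDav} matches the paper.
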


\subsection{Structure of GCBA homology manifolds}
We are going to formulate and  prove the main technical result.

\begin{thm} \label{thm: maintech}
For natural numbers  $0\leq k \leq n$,
let $U\subset X$ be an open subset of a GCBA space $X$. Assume that $U$ is a homology $n$-manifold.
Let $F:U\to  \R^{n-k}$  be an  $(n-k,\delta )$-strainer map and let  $\Pi$ be a fiber of the map $F$. Let $E\subset \Pi$ be
the  set of points at which $F$ does not have a local extension to
an $(n-k+1, 12 \cdot \delta )$-strainer map $\hat F =(F,f)$.

Assume finally that $\delta <20 ^{-n+k-1}$.    Then the set $E$ is finite and the complement
$\Pi \setminus E$ is a topological  $k$-manifold. Moreover, if $k\leq 3$ then $\Pi$ is a topological $k$-manifold.
\end{thm}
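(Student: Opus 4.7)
The plan is to proceed by induction on the fiber dimension $k$. The base case $k = 0$ is essentially immediate: Corollary \ref{lem: admiss} makes $\Pi$ a homology $0$-manifold, hence a discrete set (which is a topological $0$-manifold), while property (7) of strainer maps forces $E$ to be finite. For the inductive step with $k \geq 1$, I would first record two preliminary facts that follow from $\delta < 20^{-n+k-1}$: Corollary \ref{lem: admiss} makes $\Pi$ a homology $k$-manifold, and property (7) makes $E$ a finite subset of $\Pi$.

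For (A), I work at a fixed $x \in \Pi \setminus E$. Since $x \notin E$, there is some $p \in X$ and a neighborhood $W$ of $x$ on which $\hat F := (F, d_p)$ is an $(n-k+1, 12 \delta)$-strainer map, and Theorem \ref{thm: fibration} supplies an open $W' \ni x$ on which $\hat F$ is a Hurewicz fibration with contractible fibers. Restricting the base to the line $\{F(x)\}\times \R$ produces a Hurewicz fibration $g := d_p|_{\Pi \cap W'} : \Pi \cap W' \to I$ whose fibers are the $(k-1)$-dimensional fibers of $\hat F$ through nearby points. By the inductive hypothesis applied to $\hat F$, each such fiber is a topological $(k-1)$-manifold outside a finite singular subset; exploiting the contractibility of the fiber through $x$ I would shrink $W'$ to a product-like $W''$ so as to push those finite singular loci on all fibers it meets outside of $W''$, obtaining a Hurewicz fibration $g : \Pi \cap W'' \to I'$ every fiber of which is a topological $(k-1)$-manifold. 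Theorem \ref{thm: localproduct} then concludes that $\Pi \cap W''$ is a topological $k$-manifold, proving (A).

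For (B), when $k \leq 3$, one must upgrade $\Pi$ to a topological $k$-manifold even at the points of $E$. The cases $k \leq 2$ follow directly from Theorem \ref{thm: bing} applied to the homology $k$-manifold $\Pi$. For $k = 3$ and $x \in E$, Lemma \ref{prop: whe} together with Lemma \ref{lem: spherehomology} applied inside $\Pi$ (which is a homology $3$-manifold by Corollary \ref{lem: admiss}) exhibits the link $S_r(x) \cap \Pi$ for small $r$ as a closed space with the integral homology of $\Sph^2$; by (A) the link lies entirely inside the topological $3$-manifold $\Pi \setminus E$, so it is a closed topological surface, and the classification of surfaces forces it to be $\Sph^2$. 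Applying Theorem \ref{thm: localproduct} to the Hurewicz fibration $d_x|_\Pi$ on a small punctured ball, whose fibers are these $2$-spheres, together with a coning argument at $x$, then yields a Euclidean $3$-dimensional neighborhood of $x$ in $\Pi$.

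The main obstacle is the step in (A) of promoting the inductive fiberwise manifold statement to the global manifold statement required by Theorem \ref{thm: localproduct}: one has to track simultaneously the loci of non-extendability of $\hat F$ on all fibers near $x$, exploiting the contractibility of the fiber through $x$ and the local product structure afforded by Theorem \ref{thm: fibration} to push those loci outside an adequate neighborhood of $x$. A secondary subtlety lies in the constants: the factor $12$ incurred by each extension balanced against the bound $\delta < 20^{-n+k-1}$ is exactly what controls how many times the extension argument may be iterated while staying within the admissible strainer regime.
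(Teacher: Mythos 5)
Your inductive skeleton, the choice of tools (strainer extension, Theorem \ref{thm: fibration}, Theorem \ref{thm: localproduct}), and the treatment of small $k$ largely track the paper. But the step you yourself flag as the main obstacle in (A) --- pushing the finite singular loci of the extended map $\hat F$ on all nearby fibers outside some neighborhood $W''$ of $x$ --- is not repairable by shrinking, and the paper does something entirely different.

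The difficulty is twofold. First, $x$ itself may belong to the closed set $K_1$ of points where $\hat F$ does not extend to an $(n-k+2, 144\delta)$-strainer map, and every neighborhood of $x$ contains $x$, so the singular point on the fiber through $x$ simply cannot be avoided. Second, even if $x\notin K_1$, the set $K_1$ only meets each fiber of $\hat F$ in a finite set; it is not itself finite or discrete, and nothing prevents it from being a transversal to the fibers accumulating at $x$, so that $K_1\cap W''\neq\emptyset$ for every neighborhood $W''\ni x$. The contractibility of $W$ and of its fibers gives you a section and a Hurewicz fibration, but no control over where $K_1$ sits. Consequently you cannot arrange all fibers of $g:\Pi\cap W''\to I'$ to be honest $(k-1)$-manifolds, and Theorem \ref{thm: localproduct} does not apply as you propose. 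Note this only bites for $k\geq 5$: when $k=4$ the fibers of $\hat F$ are homology $3$-manifolds, hence genuine $3$-manifolds by the case $k=3$, so there is no singular set at all and your argument goes through --- which is exactly how the paper handles $k=4$.

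What the paper actually does for $k\geq 5$ is to accept the singular set: it sets $W=\Pi\cap U_x$, uses your argument to show $W\setminus K_1$ is a topological $k$-manifold, lets $K_0\subset K_1$ be the non-manifold locus of the homology $k$-manifold $W$, and then directly verifies the \emph{disjoint disk property} for $W$ in an eight-step argument (using the Hurewicz fibration $f:W\to J$ with contractible fibers to control how disks can be pushed off the finite-per-fiber set $K_0$), and concludes by the Edwards--Quinn recognition theorem (Theorem \ref{thm:  ddp}). That argument is the genuine content of the theorem for $k\geq 5$ and is entirely absent from your sketch.

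There is also a gap in your case $k=3$ of (B). You invoke Lemma \ref{prop: whe} and Lemma \ref{lem: spherehomology} ``inside $\Pi$'', but those lemmas hold for spaces with an upper curvature bound, and the fiber $\Pi$ is merely a homology $3$-manifold with no intrinsic curvature bound; they do not apply. You also assert that the link $S_r(x)\cap\Pi$ is a closed topological surface because it sits inside the $3$-manifold $\Pi\setminus E$, but a two-dimensional subset of a $3$-manifold need not be a surface. The paper avoids both issues by extending $F$ on a neighborhood of $B_r^*(x)\cap\Pi$ and noting that the fibers of the resulting Hurewicz fibration $f:B_r^*(x)\cap\Pi\to(0,r)$ are the distance spheres $S_t(x)\cap\Pi$, which are $2$-manifolds by the already settled case $k=2$; Theorem \ref{thm: trivialbundle} then makes $f$ a trivial fiber bundle, $B_r^*(x)\cap\Pi\cong(0,r)\times M$, so $B_r(x)\cap\Pi\cong CM$ by one-point compactification, and the homology $3$-manifold condition on $\Pi$ forces $M$ to be a homology $2$-sphere, hence $\Sph^2$.
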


\begin{proof}
 By our assumption, $\delta < \frac 1 {20 \cdot  (n-k)}$ and $12\cdot \delta < \frac 1 {20 \cdot (n-k+1)}$. Thus,  Corollary \ref{lem: admiss} and  Theorem \ref{thm: fibration}  apply to $F$ and the extensions of $F$ provided by Subsection \ref{subsec: extension}.
Hence,  $\Pi$ is a homology $k$-manifold by Corollary \ref{lem: admiss}. Due to Subsection
\ref{subsec: extension}, the set $E\subset \Pi$  is finite.



We fix $n$ and proceed by induction on $k$.
 For $k\leq 2$, we deduce from   Theorem \ref{thm: bing} that $\Pi$ is  a topological $k$-manifold.

Assume  $k=3$. Let $x\in \Pi$ be arbitrary. By Subsection \ref{subsec: extension}, we find some $r>0$ such that the ball  $\bar B_r(x) \subset U$   is compact and has the following property.
There exists an open set $V\subset X$ containing $B_r^{\ast} (x) \cap \Pi$
such that  the map $\hat F=(F,f): V\to \R^{n-k+1} $ is an $(n-k+1,12\cdot \delta )$-strainer map,
where $f$ is the distance function $f=d_x$.

The fibers of $\hat F$ through points $z\in B_r^{\ast} (x) \cap \Pi$ are compact distance spheres $\Pi_t: =S_t (x) \cap \Pi$ around $x$ in $\Pi$.
By Theorem  \ref{thm: fibration} the restriction of $\hat F$ to a neighborhood of any such fiber $\Pi_t$ is a Hurewicz fibration.
Hence, the restriction of $f$   to a neighborhood of $\Pi_t$ in $\Pi$ is a Hurewicz fibration, for any $0<t<r$.
Therefore, $f:B_r^{\ast} (x) \cap \Pi \to (0,r)$  is a Hurewicz fibration.

By the already verified  case $k=2$, the fibers of $\hat F$ (hence of $f$) are topological $2$-manifolds. Due to Theorem \ref{thm: trivialbundle},
the Hurewicz fibration  $f$ must be a fiber bundle. Since the base of the bundle is a contractible interval, the bundle must be trivial. Thus,
$B_r^{\ast} (x) \cap \Pi$ is homeomorphic to $(0,r)\times M$ for  a  topological $2$-manifold $M$.

From the uniqueness of one-point compactifications, we see that  $B_r (x) \cap \Pi$ is homeomorphic to
the cone $CM$ over $M$. Since $\Pi$ is a homology $3$-manifold, $M$ must have the homology of $\Sph ^2$.
Therefore, $M$ is homeomorphic to $\Sph ^2$ and
 $B_r (x) \cap \Pi$ is homeomorphic to $\R^3$. Since  the point $x$ was arbitrary, $\Pi$ is a topological $3$-manifold.

Assume now $k=4$.  For any point $x\in \Pi \setminus E$, there exists a neighborhood $U_x$ of $x$ in $X$ and an extension
of $F$ to an $(n-k+1,12\cdot \delta )$-strainer map $\hat F=(F,f) :U_x \to \R^{n-k+1}$.  Applying the case $k=3$, we know that the
fibers of $\hat F$ are topological $3$-manifolds.     Due to Theorem \ref{thm: fibration}, we may restrict to a smaller neighborhood of $x$  and
assume that $\hat  F:U_x\to \hat F(U_x)$ is a  Hurewicz fibration. Then so is the restriction $\hat F : \Pi \cap U_x  \to  \hat F(\Pi \cap U_x)$, which is nothing else but the last coordinate $f$.   Applying Theorem \ref{thm: localproduct}, we  see that $\Pi \cap U_x$ is a $4$-manifold.   Since $x\in \Pi \setminus E$  was arbitrary, this finishes the proof for  $k=4$.

Assume now $k\geq 5$ and  that the claim is true for $k-1$.
 We consider an arbitrary point $x\in \Pi\setminus E$, a neighborhood $U_x$
of $x$ and an extension of $F$ to an $(n-k+1,12\cdot  \delta)$-strainer map $\hat F =(F,f)$  as before. Making $U_x$ smaller,
we may assume by Theorem \ref{thm: fibration}, that  the
  restriction  $\hat F: U_x\to \hat F (U_x)$ is a Hurewicz fibration with contractible fibers.

Consider the intersection $W := \Pi \cap U_x$ and the restriction $f: W\to f(W) \subset \R$ which is a Hurewicz fibration with contractible fibers.
 By making $U_x$ smaller (if needed), we may and will assume that $f(W)$ is an open interval $J\subset \R$.
 In this setting we will prove that $W$ is a topological $k$-manifold.

   For $t\in J$ we let  $W_t$ the preimage $f^{-1} (t) \subset W$, which is a  contractible fiber  of the strainer map $\hat F: U_x \to \R^{n-k+1}$.

Let $K_1$ be the closed subset of points of $U_x$ at which $\hat F$ does not (locally) extend to an $(n-k+2, (12)^2 \cdot \delta)$-strainer map.
By the inductive assumption, the  intersection of any fiber of $\hat F$ with $U_x\setminus K_1$ is a topological $(k-1)$-manifold.
Applying Theorem \ref{thm: localproduct} to the Hurewicz fibration $f:W\to J$ we deduce that $W\setminus K_1$ is a topological $k$-manifold.

Consider the set of manifold points $\mathcal M(W)$ and its complement $K_0:=W\setminus \mathcal M(W)$, the set of non-manifold points in $W$.
We have just shown that $K_0$ is contained in $K_1$.   We assume that $K_0$ is not empty, and we are going to derive a contradiction.

By the inductive assumption, the set $K_1$  intersects every fiber  of $\hat F$ only in finitely many points. Hence, for any $t\in J$ the intersection  $W_t \cap K_0$ is finite.

The Hurewicz fibration $f:W\to J$ has contractible base and fibers, hence $W$ is contractible, in particular, it is connected.
The set $W\setminus K_0$ is not empty, as we have seen.
 Due to Theorem \ref{thm:  ddp}, it suffices to prove  that $W$ satisfies the disjoint disk property, in order to conclude that
$W$ is a topological $k$-manifold and to achieve a contradiction.

 The verification of the disjoint disk property occupies the rest of the proof and  happens in several steps.

\emph{Step 1}:  For any map $\gamma :\Sph ^1 \to W$  and $\epsilon >0$ there exists a map  $\hat \gamma :\Sph^1 \to W$
with $d(\gamma, \hat \gamma ) <\epsilon$ such that
 $f\circ \hat \gamma $  is piecewise monotone.

Indeed, we easily find a homotopy of the map $\eta _0:=f\circ \gamma :\Sph^1 \to J$ through maps $\eta _t$ such that each  $\eta _{t}$ for $t>0$
is piecewise linear. Using that $f$ is a Hurewicz fibration we can lift $\eta _t$ to a homotopy of $\gamma =\gamma _0$. Then we find the required
map $\hat \gamma$ as $\gamma _t$ for a small $t$.

\emph{Step 2}: The set $\mathcal M(W)=W\setminus K_0$ is connected.

Indeed,   for any $t\in J$, the fiber $W_t$ is a connected homology $(k-1)$-manifold.
Since $W_t\cap K_0$ is discrete, the complement  $W_t\setminus K_0$ is not empty and connected, see
\cite[Lemma 2.1]{daverma1}.  For  any connected component  $W'$  of $\mathcal M(W)$   we deduce
$W' =f^{-1} (f(W'))  \cap \mathcal M(W)$. Since $J$ is connected, this implies $f(W')=J$ and $W'=\mathcal M(W)$.

  \emph{Step 3}:  For any $y\in W$, the complement $W \setminus \{y  \}$ is simply connected.

  Indeed, consider an arbitrary curve  $\gamma : \Sph ^1 \to W \setminus \{ y \}$.   In order  to fill $\gamma$  by a disk,
  we use the local contractibility of $W$ and Step 1, and may assume that $\eta = f\circ \gamma$ is piecewise  monotone.  If the image of $\eta $ does not contain $t_0 :=f(y)$ then $\gamma$ lies  in the contractible set $f^{-1} (\eta  (\Sph ^1))$  (which does not contain the point  $y$)  and the statement is clear.

  If the image of $\eta$ contains $t_0$, we can write $\eta$ as a concatenation of finitely many curves $\eta _i$ based in $t_0$, each of them completely contained either in  $[t_0,\infty )$ or in $(-\infty ,t_0]$.     The corresponding decomposition of $\Sph ^1$ decomposes $\gamma$ in a finite concatenation of possibly non-closed curves $\gamma _i$ each of them ending and starting on $W_{t_0}$.

  The homology $(k-1)$-manifold  $W_{t_0} $ is connected, hence so is $ W_{t_0} \setminus \{ y\}$. Therefore,
  we can connect  the endpoints of each $\gamma _i$ in $W_{t_0}$.

 Concatenating these "connection curves " with $\gamma$ we obtain a closed curve $\hat \gamma$, homotopy equivalent to
 $\gamma$ in $W\setminus \{ y\}$. Moreover, $\hat  \gamma$
  is  a concatenation of finitely many closed curves $\tilde \gamma$, such that $f\circ \tilde \gamma$ is contained  either in
   $(-\infty ,t_0]$ or $[t_0,\infty )$.

   For any such curve $\tilde \gamma$
   we can  now fill  $f\circ \tilde \gamma$ in $J$ by a disk none of whose interior point is sent to $t_0$. Using the homotopy lifting property, we can lift this disk to a filling of $\tilde \gamma$ in $W\setminus \{ y\}$. Thus, any of the curves $\tilde \gamma$ and hence $\gamma$ are contractible in $W\setminus \{ y\}$.

\emph{Step 4}: For  any curve $\gamma :\Sph ^1\to W \setminus K_0$, there exists an extension of $\gamma $ to a disk
$\phi : D\to W$ intersecting $K_0$ only in  finitely many points.

Indeed, arguing as in Step 3, we can assume that $f\circ \gamma$ is piecewise monotone.  Subdividing
$f\circ \gamma$  and using connection curves in single fibers of $f$, as in the previous step, we reduce the question to the case that $f\circ \gamma $ is the concatenation of two monotone curves.
Reparametrizing $\gamma$ we can assume that
$\gamma $ is parametrized on an interval $[-a,a]$ such that  $f\circ \gamma (q)=f\circ \gamma (-q)$ for all
$q\in [0,a]$.

For any $q\in [0,a]$ we  choose any  curve $\gamma _q$ in $W_{f(\gamma (q))} \setminus K_0$
connecting $\gamma (-q)$ and $\gamma (q)$.  Let $Q$ denote the set of numbers $q\in [0,a]$ such that the concatenation of $\gamma _q$ and  $\gamma |_{[-q,q]}$  can be filled by a disk in $W$ intersecting only finitely many points in  $K_0$.

Clearly $Q$ contains $0$. We are done if $Q$ contains $a$.
 Using a connectedness argument
it suffices to prove that for any $q_0$ there exists some $\epsilon >0$  such that  for any
$q$ with $|q-q_0|<\epsilon$ the concatenation  $\gamma _{q,q_0}$
of $\gamma _q, \gamma _{q_0}$ and the parts of $\gamma$ between $\pm q$ and $\pm q_0$ can be filled in $W$ by a disk intersecting $K_0$ only in a finite number of points.

We fix $q_0 \in J$.

Since  the Hurewicz fibration $f:W\to J$ has   contractible fibers,  we can find a continuous family $P_s, s\in J$
of homotopy retractions $P_s:W\times [0,1]$ from $W$ to $W_s$.  Indeed, the map $f$
satisfies the homotopy extension property for
every pair of finite-dimensional spaces, see  \cite[Theorem 1.2]{Michael2}.   Thus, we can
extend  a continuous  map $P:W\times J\times [0,1] \to W$ such that $P(w,f(w),t) =P(w,s,0)=w$ for all $w\in W, t\in [0,1]$
 and $s\in J$  and such that $f\circ P  (w,s,t) =(1-t)\cdot f(w) + t \cdot s$ for all $(w,s,t)\in W\times J\times [0,1]$.

By continuity we find some $\epsilon >0$ such that for all $q\in [0,a]$ with $|q-q_0|<\epsilon$ the
homotopy retraction $P_{f(\gamma (q))} $ from $W$ onto the fiber $W_{f(\gamma (q))}$ has the following property:
The trace under this homotopy retraction of $\gamma _{q_0}$ and both parts of $\gamma$ between
$\pm q_0$ and $\pm q$ do not intersect $K_0$.

Therefore, the homotopy  retraction $P_{f(\gamma (q))} $  defines a homotopy (not intersecting $K_0$) of the curve
$\gamma _{q,q_0}$ to a closed curve $c$ completely contained in the fiber $W_{f(\gamma (q))}$.  Filling the curve $c$
inside the contractible fiber $W_{f(\gamma (q))}$ by any disk, we obtain  the required filling of the curve $\gamma_{q,q_0}$.
This finishes the proof of Step 4.

Step 5:  For all $z\in W$  and all $\epsilon >0$ there exists an open contractible neighborhood $V^z$ of $z$ in $W$ with diameter smaller than $\epsilon$
such that  the restriction $f: V^z \to f(V^z)$ is a Hurewicz fibration with contractible fibers.

Indeed, this follows  from Theorem
\ref{thm: fibration} in the same way as in the construction of $W$.

\emph{Step 6}:  The conclusions of Step 3 and Step 4 are valid for all neighborhoods $V^z$ constructed in Step 5.

Indeed, the proofs of the respective steps apply literally.

\emph{Step 7}:   For every   disk $\phi : D\to W$ and every $\epsilon >0$,  there exists a disk $ \phi _{\epsilon}:  D\to W$ with pointwise distance to $\phi$ at most $\epsilon$ and such that the image of $ \phi _{\epsilon}$ meets $K_0$ only in a finite set.

 Indeed, we consider a covering of the $\phi ( D)$ by the sets $V^z$ described above each of them of diameter at most $\frac \epsilon 3$.
 Using the Lemma of Lebesgue  we find a triangulation of the disk $D$ by a finite graph $\Gamma$,
 such that for any  $2$-simplex  $\Delta$ of the triangulation, the image $\phi (\Delta)$ is contained in one of the sets $V^z$.

 We slightly move the images of the vertices of $\Gamma$ and use Step 2 and Step 6
 in order to  find  a map $ \phi_{\epsilon}  :\Gamma \to U$ which does not meet $K_0$
 and such that for any $2$-simplex   $\Delta$ of the triangulation $\Gamma$ the images $\phi _{\epsilon}  (\partial \Delta)$ and $\phi _2 (\Delta)$ are contained in one set $V^z$.   Applying Step 4 and Step 6, we
can extend $\phi _{\epsilon}$ from the boundary $\partial \Delta$ of any $2$-simplex $\Delta$
such that this extension lies  inside the same open set $V^z$ and intersects $K_0$ only in a finite set of points.
Taking all these extensions together, we obtain  the required disk $\phi _{\epsilon}$.

\emph{Step 8}:  The disjoint disk property holds in $W$.

  Thus, let $\phi_1,\phi_2 : D\to W$  and  $\epsilon >0$ be given. Apply  the previous Step 4
 and obtain a map  $\tilde \phi _1 :D \to W$ with distance at most $\frac \epsilon 2$ to $\phi _1$, whose image intersects
 $K_0$ only in a finite set of points $Q=\{x_1,....,x_l \}$.

 We find a covering of the compact image $\phi_ 2 (D)$ by finitely many open neighborhoods $V^z$  as above
  of diameter smaller than $\frac \epsilon 2$,  such that any subset $V^z$ contains at most one of the points $x_i$.

We find a triangulation of the disk $D$ by a finite graph $\Gamma$,
 such that for any  $2$-simplex  $\Delta$ of the triangulation, the image $\phi _2 (\Delta)$ is contained in one of these sets  $V^z$. Arguing as in the previous Step 7 (applying Step 2), we   find  a map $\tilde \phi_2 :\Gamma \to W$ which does not meet $K_0$
 and such that for any $2$-simplex   $\Delta$ of the triangulation the image $\phi _2 (\partial \Delta)$ is contained
 in one of the sets $V^z$.


 By Step 3 and Step 6, for any of our sets $V^z$, the complement   $V^z\setminus Q$ is simply connected.
 Therefore, we can extend $\tilde \phi _2 :\Gamma \to  W$ to a map $\tilde \phi _2:  D\to W \setminus Q$ such
 that $\tilde \phi _2  (\Delta)$ and $\phi _2 (\Delta)$ are in the same set $V^z$ of our covering.

 By construction, the intersection $\tilde \phi _2 ( D) \cap \tilde \phi _1 ( D)$ is contained in the set of manifold points $U\setminus K_0$. Since in the $n$-manifold $U\setminus K$ the disjoint disk property holds true,
  we can
 slightly perturb  $\tilde \phi _2 $ and $ \tilde \phi _1$ (outside of $K$), so that the arising disks do not intersect.

 This finishes the proof of Step 8 and therefore of the Theorem.
\end{proof}

\subsection{Main theorems}
We  now finish the proof of the main theorems.

\begin{proof}[Proof of Theorem \ref{thm: quinn}]
Let  $X$ be a metric space with an upper curvature bound, which is a homology $n$-manifold. Then $X$ is a  GCBA space, by Lemma \ref{lem: homchar}.
We cover $X$ by tiny balls $O$, and apply   Theorem \ref{thm: maintech} in the case $k=n$ and the constant  map $F:O\to \R ^0 =\{0 \}$.
We deduce that $X$ is a topological manifold outside a discrete set of points.
\end{proof}

\begin{proof}[Proof of Theorem \ref{thm: topreg}]
We have seen in Lemma \ref{lem: 1to3} and Lemma  \ref{lem: 2to3}  that (1) implies (3) and that (2) implies (3).

Assume now that (3) holds, thus all spaces of directions are homotopy equivalent to a non-contractible space.
We have seen in Lemma \ref{lem: same}  that $X$ must be a homology $n$-manifold and all spaces of directions are homotopy equivalent to
$\mathbb S^{n-1}$.

By Theorem \ref{thm: maintech}, $X$ is a topological manifold if $\dim (X) \leq 3$.

Let the dimension of $X$ be at least $4$. Then all $\Sigma _x$ are simply connected, hence so are all small
punctured balls $B_{r} ^{\ast} (x)$. 
 The result that $X$ is a topological manifold follows now directly as a combination of  Theorem \ref{thm: quinn} and Theorem \ref{thm: bcl}.




Thus (3) implies (1).

It remains to  prove that (1) implies (2).
 Assuming that $X$ is a topological $n$-manifold let $x\in X$ be arbitrary. 
 We deduce from
  Corollary \ref{cor: homlink}  that any space of directions $\Sigma _x$ is a homology $(n-1)$-manifold and any  tangent space $T_x=C(\Sigma _x)$ is a homology $n$-manifold.  Moreover, any space of direction is homotopy equivalent to $\Sph ^{n-1}$ by Lemma \ref{lem: 1to3}.
  
   If $n\leq 3$ then $\Sigma _x$ is a topological manifold,  Theorem \ref{thm: maintech}, homeomorphic to $\Sph ^{n-1}$ by   Lemma  \ref{lem: 1to3}. Thus,  $T_x$ is homeomorphic to $\R^n$.


  Assume $n\geq 4$. By Theorem \ref{thm: quinn},
 the  set of non-manifold points of $T_x$  is discrete. Due to the conical structure, this directly implies
   that $T_x\setminus \{ 0\}$
   is a topological manifold.   But $\Sigma _x$ and, therefore, all  punctured balls around $0$ in $T_x$  are simply connected.  From Theorem \ref{thm: bcl} we deduce that $T_x$ is a topological  $n$-manifold.

   Thus, $T_x$ is a contractible $n$-manifold, simply connected at infinity, since $\Sigma _x$ is simply connected.
   Therefore, $T_x$ is
   homeomorphic to  $\R^n$.
\end{proof}

\subsection{Some improvements}
Theorem \ref{thm: topreg} can be slightly strengthened in  dimensions $\leq 4$. The first  of these results is
contained in \cite{thurston}.

\begin{thm} \label{thm: dim3}
Let $X$ be a locally compact space with an upper curvature bound.
If $X$ is a homology manifold with $n\leq 3$ then $X$ is a topological manifold.
If $X$ is a topological $n$-manifold with $n \leq 4$ then any space of directions $\Sigma _x$ in $X$
is homeomorphic to $\Sph^{n-1}$.
\end{thm}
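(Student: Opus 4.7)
The plan is to address the two assertions separately, in both cases reducing the question to statements already proved in the paper together with classical low-dimensional manifold classification.

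For the first assertion, assume $X$ is a homology $n$-manifold with $n \leq 3$. By Lemma \ref{lem: homchar}, $X$ is locally geodesically complete, hence a GCBA space. I would cover $X$ by tiny balls $O$ and, for each such ball, apply Theorem \ref{thm: maintech} with $k = n$ to the constant (trivially $(0,\delta)$-strainer) map $F \colon O \to \R^0$. The unique fiber of $F$ is $O$ itself, and since $n \leq 3$ the ``moreover'' clause of Theorem \ref{thm: maintech} yields that $O$ is a topological $n$-manifold. Since such balls cover $X$, this gives that $X$ is a topological $n$-manifold. This is the same recipe as in the proof of Theorem \ref{thm: quinn}, except we now exploit the sharper output of Theorem \ref{thm: maintech} available in dimensions $\leq 3$.

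For the second assertion, let $X$ be a topological $n$-manifold with $n \leq 4$ and fix $x \in X$. By Corollary \ref{cor: homlink}, $\Sigma_x$ is a compact homology $(n-1)$-manifold, and it carries an upper curvature bound of $1$. Applying the first assertion to $\Sigma_x$ in dimension $n-1 \leq 3$, I obtain that $\Sigma_x$ is a closed topological $(n-1)$-manifold. By Lemma \ref{lem: 1to3}, $\Sigma_x$ is in addition homotopy equivalent to $\Sph^{n-1}$; in particular, for $n \geq 4$ it is simply connected.

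It remains to upgrade the homotopy equivalence with $\Sph^{n-1}$ to a homeomorphism. For $n \leq 3$ the space $\Sigma_x$ is a closed topological manifold of dimension at most $2$ with the homotopy type of $\Sph^{n-1}$, and the classification of surfaces (or the trivial classification in dimensions $0$ and $1$) immediately identifies it with $\Sph^{n-1}$. For $n = 4$, the remaining case, $\Sigma_x$ is a simply connected closed topological $3$-manifold, so the $3$-dimensional Poincar\'e conjecture, in the form of Perelman's theorem, yields $\Sigma_x \cong \Sph^3$. The only serious external input beyond the material developed in the paper is thus the $3$-dimensional Poincar\'e conjecture, which is the genuine obstacle and the reason the second assertion is confined to dimensions $n \leq 4$.
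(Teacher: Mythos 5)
Your proposal is correct and follows essentially the same route as the paper: apply Theorem \ref{thm: maintech} locally with $k=n$ and the trivial strainer map for the first assertion, then for the second assertion combine Corollary \ref{cor: homlink} and Lemma \ref{lem: 1to3} with the first assertion and the (generalized) Poincar\'e conjecture in dimension $n-1\le 3$. The only difference is that you spell out the low-dimensional identification step more explicitly, which the paper compresses into a single reference to the Poincar\'e conjecture.
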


\begin{proof}
The first statement is local and is therefore contained (as the case $k=n=3$) in Theorem \ref{thm: maintech}.

To prove the second statement, we use Lemma \ref{cor: homlink} and Lemma \ref{lem: 1to3} to deduce that $\Sigma _x$
is a homology $(n-1)$-manifold, homotopy equivalent to $\Sph^{n-1}$. By the first statement and the resolution of the Poincar\'e conjecture,
$\Sigma _x$ is homeomorphic to $\Sph^{n-1}$.
\end{proof}

Theorem \ref{thm: maintech} and therefore, Theorem \ref{thm: topreg}  can be strengthened as follows.
 Since the result is not used in the sequel, the proof will be somewhat sketchy.
 For definitions and fundamental results about ends of manifolds we refer to \cite{Siebenmann} and  \cite{HR}.
\begin{thm} \label{thm: addendum}
Under the assumptions of Theorem \ref{thm: maintech}, for any $x\in \Pi$ there exists a neighborhood  of $x$ in $\Pi$ homeomorphic
to the open cone $C(M)$ over a topological  $(k-1)$-manifold $M$, with the homology of $\Sph^{k-1}$.
\end{thm}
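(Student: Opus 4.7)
The plan is to induct on $k$ and, for $x \in E$, to reduce the problem to a fiber bundle argument by extending the strainer $F$ via property~(6) so that the distance function $d_x$ becomes an additional strainer coordinate. The cases $k \le 3$ follow immediately from Theorem~\ref{thm: maintech}: for $k \le 2$ the fiber $\Pi$ is a topological manifold so any Euclidean ball serves as $C(\Sph^{k-1})$, and for $k=3$ its proof already realizes a neighborhood of any $x \in \Pi$ as an open cone over a closed topological $2$-manifold with the homology of $\Sph^2$. Moreover, for $x \in \Pi \setminus E$ in any dimension, a Euclidean neighborhood $\cong C(\Sph^{k-1})$ is provided by Theorem~\ref{thm: maintech}.

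Assume now $k \ge 4$ and $x \in E$. By property~(6) of Subsection~\ref{subsec: extension} there are $r>0$ and an open $V \subset U$ containing $B^{\ast}_r(x) \cap \Pi$ on which $\hat F = (F, d_x) : V \to \R^{n-k+1}$ is an $(n-k+1, 12\delta)$-strainer map. The fibers of $\hat F$ meeting $V \cap \Pi$ are precisely the distance spheres $S^{\Pi}_t(x) := S_t(x) \cap \Pi$ for $0 < t < r$; by Corollary~\ref{lem: admiss} they are compact homology $(k-1)$-manifolds, and by Lemma~\ref{lem: spherehomology} they have the homology of $\Sph^{k-1}$. Granting the subclaim below that each $S^{\Pi}_t(x)$ is in fact a topological $(k-1)$-manifold, Theorem~\ref{thm: fibration} shows that $d_x : B^{\ast}_r(x) \cap \Pi \to (0, r)$ is a Hurewicz fibration with closed $(k-1)$-manifold fibers; Theorem~\ref{thm: trivialbundle} upgrades this to a locally trivial bundle, which is trivial since $(0, r)$ is contractible. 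Thus $B^{\ast}_r(x) \cap \Pi \cong (0, r) \times M$ for a closed $(k-1)$-manifold $M$, and collapsing the $t \to 0$ end by uniqueness of the one-point compactification identifies $B_r(x) \cap \Pi$ with the open cone $C(M)$, with $M$ inheriting the homology of $\Sph^{k-1}$ from the cone structure and the hypothesis that $\Pi$ is a homology $k$-manifold.

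The main obstacle is the subclaim that each distance sphere $S^{\Pi}_t(x)$ is itself a topological $(k-1)$-manifold. Applying the inductive addendum to $\hat F$ produces, at each $y \in S^{\Pi}_t(x)$, a neighborhood homeomorphic to $C(N)$ where $N$ is a closed topological $(k-2)$-manifold with the homology of $\Sph^{k-2}$; but $N$ need not be a genuine sphere, so the cone apex may fail to be a manifold point. For $k-1 \le 3$ the manifold property is automatic from Theorem~\ref{thm: bing}, and for $k-1 \ge 5$ one invokes the Edwards--Quinn recognition Theorem~\ref{thm: ddp} after verifying the disjoint disk property on the distance sphere. The verification of the disjoint disk property parallels Steps~1--8 in the proof of Theorem~\ref{thm: maintech}, now carried out one dimension lower on $S^{\Pi}_t(x)$: the Hurewicz fibration structure of $\hat F$ provides the lifting machinery for curves and disks, while the inductive cone structure plays the role of the local contractibility used there. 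The case $k-1 = 4$ falls outside the range of Theorem~\ref{thm: ddp} and must be treated by the specialized $4$-dimensional argument attributed to S.~Ferry in the acknowledgments.
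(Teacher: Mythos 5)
Your proposal correctly identifies the cases $k\le 3$ and correctly sets up the strainer extension $\hat F=(F,d_x)$ for $k\ge 4$, but it hinges on a subclaim that is false in general and cannot be repaired in the way you suggest. You want to show that each distance sphere $S^{\Pi}_t(x)$ is a topological $(k-1)$-manifold, so that Theorem \ref{thm: trivialbundle} promotes the Hurewicz fibration $d_x:B^*_r(x)\cap\Pi\to(0,r)$ to a genuine fiber bundle. But the distance spheres are \emph{not} in general topological manifolds: they are homology $(k-1)$-manifolds with a finite singular set (by Theorem \ref{thm: maintech} applied to $\hat F$, one dimension down), and those singular points can be genuine non-manifold points --- for instance a cone point over a non-simply-connected homology $(k-2)$-sphere, exactly the phenomenon of Example \ref{exmp: double}. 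At such a point the disjoint disk property actually \emph{fails} (by the contrapositive of Theorem \ref{thm: ddp}: DDP plus the other hypotheses would force manifoldness), so ``verifying DDP on the distance sphere'' is not a route that can succeed. This is not a technical wrinkle but the central obstacle of the addendum.

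The paper's proof avoids ever claiming the distance spheres are manifolds. For $k\ge 6$ it applies Siebenmann's collared-end theorem to $N=B^*_r(x)\cap\Pi$ directly. For $k\ge 5$ it uses Ferry's argument: take Quinn's \emph{resolution} $g:M\to\Pi_t$ of the singular distance sphere by a genuine closed $(k-1)$-manifold $M$, glue the cylinder $M\times(-1,1]$ onto $f^{-1}(0,t)$ along $g$, check that the resulting space $N^+$ is a homology $k$-manifold with only finitely many singularities all having simply connected punctured neighborhoods, conclude via Theorem \ref{thm: bcl} that $N^+$ is a manifold, and then invoke the open-collar theorem of \cite{open-collar} to collar the end. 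The resolution is exactly the tool that replaces your (false) manifold subclaim; without it, there is no manifold $M$ to serve as the cone base. You also misplace Ferry's contribution: his argument is not a ``specialized $4$-dimensional recognition'' for the distance sphere, but a $k$-dimensional collaring argument for all $k\ge 5$, which the paper notes was supplied for the critical case $k=5$ where Siebenmann's theorem is unavailable.
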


\begin{proof}
 	We proceed by induction on $k$. 
	
In the cases $k\leq 3$ the statement is clear, since $\Pi$ is a topological manifold. In the case $k=4$ one argues in the same way as in the case $k=3$
in the proof of Theorem \ref{thm: maintech}. Using that the fibers of the Hurewicz fibration $f:B_r^{\ast} (x) \to (0,r)$ are topological $3$-manifolds,
one concludes that $B_r^{\ast} (x)$ is homeomorphic to $(0,r)\times M$ for a topological $3$-manifold $M$.  Thus
 $B_r(x)$ is homeomorphic to the cone $C(M)$.  Since $X$ is a homology $4$-manifold, $M$ must be a homology $3$-sphere.

Let now $k \geq 5$.   Find a small number $r>0$, such that $F$ extends to an $(n-k+1,12\cdot \delta )$-strainer map $\hat F=(F,f)$
on a neighborhood $V$ of $N:=B_r^{\ast} (x) \cap \Pi$ in $X$.  Here $f$ denotes as before, the distance function $f=d_x$.
By Theorem \ref{thm: maintech}, $N$ is a topological $k$-manifold
and, as we have seen in the proof of Theorem \ref{thm: maintech}, the  map $f:N\to (0,r)$ is a Hurewicz fibration.

    We claim that the end of the manifold $N$  corresponding to the point $x$ is \emph{collared}.  Thus,  $N$ contains
a subset homeomorphic to $M\times [0,\infty )$, whose closure in $\Pi$ contains a neighborhood of $x$, for some manifold $M$.
Since $\Pi$ is a homology manifold, this would imply that $M$ must have the homology of $\Sph ^{k-1}$ and finish the proof of the theorem.

For $k\geq 6$ the statement is a direct consequence of Siebenmann's theorem on collared ends, \cite{Siebenmann}, \cite[Theorem 10.2]{HR}, and the observation that
$N$ homotopically retracts onto any compact fiber of $f$.

The following elegant argument due to Steven Ferry covers the case $k=5$ as well as the case $k\geq 6$.

Fix a fiber $\Pi_t =f^{-1} (t)$ for some $t$.  By induction,   $\Pi_t$ is  a   homology $(k-1)$-manifold  with  a finite set $K$ of
 singularities (each of whom has a neighborhood in $\Pi_t$ homeomorphic to a cone).  By the main result of  \cite{Quinn-obstr},
 there exists a
\emph{resolution} $g:M \to \Pi_t$ which is a homeomorphism outside the preimages $g^{-1}  (K)$.

Consider the space $N^+$ obtained by  gluing the cylinder $M\times (-1,1]$ to  $f^{-1} (0,t) \subset N$ by identifying
$M\times \{1 \}$ with $\Pi_t$ along the map $g$.  The space $N^+$ is by construction a topological manifold outside the finitely many singularities in $K\subset \Pi _t$.  Computing the local homology at points in $K$, we see that $N^+$ is a homology $k$-manifold.
Finally, arguing as in the proof of Theorem \ref{thm: maintech}, we see that all points in $K$ have arbitrary small simply connected punctured neighborhoods in $ N^+$.  Applying Theorem \ref{thm: bcl}, we conclude that  $N^+$  is a topological $k$-manifold.

Now we apply, the main theorem from \cite{open-collar}, and see that  the topological $k$-manifold with boundary
$N^+\setminus M\times (-1,0)$  (the boundary is $M\times \{0 \}$)   is homeomorphic to  $M\times [0,\infty )$.
Thus the end of the manifold $N$ is collared.
\end{proof}

\section{Limits of manifolds} \label{sec: limit}
\subsection{Topological stability}
We start with  a part of  Theorem \ref{thm: riemlimreg}:
\begin{thm} \label{thm: part1}
Let a sequence of  complete $\CAT(\kappa)$  Riemannian manifolds $M_i$  of dimension $n$ converge in the pointed Gromov--Hausdorff topology
to a space $X$.  Then $X$  is  a topological  $n$-manifold.
\end{thm}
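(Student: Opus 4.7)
The plan is to verify condition (3) of Theorem \ref{thm: topreg} for the limit $X$, namely that every space of directions $\Sigma_x X$ is homotopy equivalent to the non-contractible sphere $\Sph^{n-1}$. Theorem \ref{thm: topreg} will then deliver that $X$ is a topological manifold, and the dimension formula $\dim X = 1 + \dim \Sigma_x X$ from \cite{Kleiner}, valid in GCBA spaces, pins its dimension to $n$.

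I would first establish that $X$ is a GCBA space. Each $M_i$ is GCBA, being a complete Riemannian $n$-manifold with sectional curvature $\le \kappa$. Upper curvature bounds pass to pointed Gromov--Hausdorff limits, local compactness is part of the standing hypothesis, separability follows from uniform local packing estimates in $n$-dimensional $\CAT(\kappa)$ balls, and local geodesic completeness persists in the limit in the spirit of \cite[Example 4.3]{LN} (the same fact invoked in the proof of Lemma \ref{lem: stable}). Now fix $x \in X$ and choose approximating points $x_i \in M_i$ with $x_i \to x$. Pick $r > 0$ small enough that $\bar B_{10r}(x) \subset X$ is tiny; then for all sufficiently large $i$ the ball $\bar B_{10r}(x_i) \subset M_i$ is likewise tiny and $\bar B_{10r}(x_i) \to \bar B_{10r}(x)$ in the Gromov--Hausdorff topology. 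Since $M_i$ is a smooth Riemannian $n$-manifold, $\Sigma_{x_i} M_i$ is isometric to the round $\Sph^{n-1}$; Proposition \ref{prop: sph}(1) applied in $M_i$ gives $S_r(x_i) \simeq \Sph^{n-1}$, part (2) of the same proposition yields $S_r(x) \simeq S_r(x_i)$ for $i$ large, and part (1) applied in $X$ identifies $\Sigma_x X \simeq S_r(x)$. Chaining these gives $\Sigma_x X \simeq \Sph^{n-1}$, and since $x$ was arbitrary, condition (3) of Theorem \ref{thm: topreg} is satisfied.

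The main obstacle is the limit passage itself: confirming that the chosen radius $r$ defines a tiny ball both in $X$ and in $M_i$ for all large $i$, and -- more substantively -- that the local dimension at $x$ does not drop below $n$, so that the Kleiner formula delivers dimension exactly $n$. Both points should follow from the uniform $\CAT(\kappa)$ hypothesis together with the Riemannian structure preventing collapse at $x$; modulo these limit-theoretic issues, the argument reduces to direct applications of Proposition \ref{prop: sph} and Theorem \ref{thm: topreg}.
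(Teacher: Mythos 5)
Your proposal matches the paper's proof in all essentials: show $X$ is GCBA, then verify condition~(3) of Theorem~\ref{thm: topreg} by chaining $\Sigma_x X \simeq S_r(x) \simeq S_r(x_i) \simeq \Sph^{n-1}$ via Proposition~\ref{prop: sph}. The one place where the paper is more direct is the last link: rather than invoking Proposition~\ref{prop: sph}(1) in each $M_i$ --- which, as written, leaves open whether the threshold $r_{x_i}$ stays bounded away from $0$ as $i\to\infty$ --- the paper observes that a complete Riemannian manifold which is $\CAT(\kappa)$ has injectivity radius bounded below uniformly (by $\pi/\sqrt{\kappa}$ when $\kappa>0$, and $=\infty$ when $\kappa\le 0$), so for $r$ below that bound $S_r(x_i)$ is outright a smooth sphere. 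That closes exactly the uniformity issue your step (1)-in-$M_i$ leaves implicit, and you could just substitute this observation for your use of Proposition~\ref{prop: sph}(1) in $M_i$. Your two lingering worries at the end are not real obstacles: the tiny-ball condition is quantitative and stable under Gromov--Hausdorff convergence of $\CAT(\kappa)$ spaces, and once every $\Sigma_x X$ is homotopy equivalent to $\Sph^{n-1}$, Lemma~\ref{lem: homchar} already forces $X$ to be a homology $n$-manifold, so $\dim X = n$ comes for free --- no separate non-collapse argument or appeal to Kleiner's formula is needed.
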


\begin{proof}
 $X$ is  a GCBA space, \cite[Example 4.3]{LN}.  Due to Theorem \ref{thm: topreg}, it  suffices to prove that for all
 $x\in X$ the space of directions $\Sigma _x$ is  homotopy equivalent to $\Sph ^{n-1}$.  Due to
 Proposition \ref{prop: sph}, it suffices to prove that for all $r$ small enough, the distance sphere
 $S_r (x)$ is homotopy equivalent to $\Sph ^{n-1}$.

  Fix a sequence $x_i  \in X_i$ converging to $x$.  For all $r$ small enough, the injectivity radius of   the Riemannian manifold
  $X_i$ is larger than $r$, hence  the distance sphere $S_r (x_i)$ is homeomorphic to $\Sph ^{n-1}$.  According to Proposition \ref{prop:  sph}, the spheres $S_r (x_i)$ are homotopy equivalent to $S_r (x)$, for all $i$ large enough. This proves the claim.
  \end{proof}

 The $\alpha$-approximation theorem (Theorem  \ref{thm: alpha}) and Petersen's stability theorem (Theorems  \ref{thm: petersen}) give us:
\begin{cor}
Under the assumptions of Theorem \ref{thm: part1}, assume in addition that $X$ is compact.  Then $M_i$ is homeomorphic to $X$, for all $i$ large enough.
\end{cor}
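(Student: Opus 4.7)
The plan is to combine Petersen's stability theorem (Theorem~\ref{thm: petersen}) with the $\alpha$-approximation theorem (Theorem~\ref{thm: alpha}). By Theorem~\ref{thm: part1}, $X$ is a topological $n$-manifold, and since it is compact by assumption, $X$ is a closed $n$-manifold. The strategy is to upgrade the pointed convergence to (unpointed) Gromov--Hausdorff convergence of closed $n$-manifolds, use Petersen to produce for each large $i$ an $\epsilon$-equivalence $f_i:M_i\to X$, and finally straighten $f_i$ to a homeomorphism via the $\alpha$-approximation theorem.

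First I would verify that $M_i$ is compact for all $i$ large. Let $D=\diam X$ and choose basepoints $p_i\in M_i$ with $(M_i,p_i)\to (X,x)$. For any $R>D$ the ball $\bar B_R(x)$ equals $X$, and the pointed convergence forces $\bar B_R(p_i)$ to be Gromov--Hausdorff close to $X$ for $i$ large; in particular every point of $\bar B_R(p_i)$ lies at distance at most $D+o(1)$ from $p_i$. Now, if $M_i$ contained a point at distance larger than $D+1$ from $p_i$, then by completeness and Hopf--Rinow the ball $\bar B_{D+1}(p_i)$ would contain a point at distance exactly $D+1$ from $p_i$, contradicting the previous bound for $i$ large. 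Hence $M_i=\bar B_{D+1}(p_i)$ is compact by Hopf--Rinow, and $M_i\to X$ in the (unpointed) Gromov--Hausdorff topology.

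Now consider the family $\mathcal F=\{X\}\cup\{M_i\}$. Each element carries an upper curvature bound $\kappa$; the sectional curvature bound together with the injectivity radius bound $\ge r$ for the $M_i$, and the local $\CAT(\kappa)$ structure inherited by $X$ from the Gromov--Hausdorff limit, yield a uniform radius $r_0>0$ (depending only on $\kappa$ and $r$) such that every ball of radius smaller than $r_0$ in a space of $\mathcal F$ is geodesically convex, hence contractible. Thus $\mathcal F$ is locally uniformly contractible via $\rho=\id$ on $[0,r_0)$, and every space in $\mathcal F$ has dimension $n$. Fix $\alpha>0$ and let $\epsilon=\epsilon(X,\alpha)>0$ be provided by Theorem~\ref{thm: alpha}. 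Applying Theorem~\ref{thm: petersen} to $\mathcal F$ with this $\epsilon$ produces some $\delta>0$; for $i$ so large that the Gromov--Hausdorff distance between $M_i$ and $X$ is less than $\delta$, we obtain an $\epsilon$-equivalence $f_i:M_i\to X$, and Theorem~\ref{thm: alpha} then produces a homeomorphism $f_i':M_i\to X$ with $d(f_i,f_i')<\alpha$. The main technical hurdle is the compactness argument for $M_i$; once compactness is in hand, the conclusion is a direct combination of the two quoted topological theorems.
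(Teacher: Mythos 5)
Your proof is correct and fills in the details of the same argument the paper invokes (the paper simply cites Theorem~\ref{thm: alpha} and Theorem~\ref{thm: petersen} and states the corollary, leaving everything else implicit). The compactness of $M_i$ for $i$ large via Hopf--Rinow, the uniform local contractibility of the family $\{X\}\cup\{M_i\}$, and the combination Petersen $\Rightarrow$ $\epsilon$-equivalence $\Rightarrow$ $\alpha$-approximation homeomorphism are exactly the intended steps. One small remark: you invoke ``the injectivity radius bound $\ge r$'' to obtain the uniform contractibility radius, but that bound is a hypothesis of Theorem~\ref{thm: riemlimreg}, not of Theorem~\ref{thm: part1} under whose assumptions the corollary is stated. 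It is not needed: all the $M_i$ are (globally) $\CAT(\kappa)$ by hypothesis, $X$ is $\CAT(\kappa)$ as a GH limit of $\CAT(\kappa)$ spaces, and the paper records right after Definition~\ref{defn: glob} that the family of all $\CAT(\kappa)$ spaces is locally uniformly contractible with $\rho=\id$ on $[0,\pi/\sqrt{\kappa})$. Citing that remark cleans up the argument and keeps it strictly within the assumptions of Theorem~\ref{thm: part1}.
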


\subsection{Iterated spaces of directions}
In order to prove the remaining statement in Theorem \ref{thm: riemlimreg},
we need to understand spaces of directions of spaces of directions.
For a GCBA space $X$, we call any space of directions $\Sigma _x X$ of $X$ a \emph{first order space of directions} of $X$. Inductively we define
a \emph{$k$-th iterated space of directions of $X$} to be a space of directions $\Sigma _z Y$ of a $(k-1)$-th iterated  space of directions $Y$  of $X$.


Using Theorem \ref{thm:  topreg} we can easily derive the following lemma, clarifying  the second statement in
Theorem \ref{thm: riemlimreg}.
\begin{lem} \label{lem: limits}
Let $X$ be a  locally compact space with an upper curvature bound.  Then the following are equivalent:
\begin{enumerate}
\item $X$ is a topological manifold and, for any $1\leq k <n$, all $k$-th  iterated spaces of directions of $X$ are homeomorphic to  $\mathbb S^{n-k}$.
\item For any $1\leq k <n$, all $k$-th iterated spaces of directions of $X$ are homotopy equivalent to $\Sph^{n-k}$.
%
\end{enumerate}
\end{lem}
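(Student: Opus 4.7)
The direction $(1) \Rightarrow (2)$ is immediate since homeomorphic spaces are homotopy equivalent. The content lies in $(2) \Rightarrow (1)$, which I would prove by induction on $n$, with the degenerate case $n = 1$ being essentially vacuous, so I assume $n \geq 2$.

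Applied with $k = 1$, condition (2) gives that every $\Sigma_x X$ is homotopy equivalent to the non-contractible space $\Sph^{n-1}$. Theorem \ref{thm: topreg} then says $X$ is a topological manifold, and Lemma \ref{lem: same} forces its dimension to be $n$. For the base case $n = 2$, Theorem \ref{thm: dim3} applied to the topological $2$-manifold $X$ directly yields $\Sigma_x X$ homeomorphic to $\Sph^1$, so (1) holds.

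For the inductive step $n \geq 3$, I would fix a point $x \in X$ and verify that $\Sigma_x X$ inherits a shifted version of (2). Being a space of directions of a GCBA space, $\Sigma_x X$ is itself a compact GCBA space of curvature $\leq 1$; by Corollary \ref{cor: homlink} and Lemma \ref{lem: 1to3} it is a homology $(n-1)$-manifold and an $\ANR$, homotopy equivalent to $\Sph^{n-1}$. The $j$-th iterated spaces of directions of $\Sigma_x X$ form a subfamily of the $(j+1)$-th iterated spaces of directions of $X$, so (2) implies that all of them are homotopy equivalent to $\Sph^{(n-1)-j}$ for $1 \leq j < n - 1$. Applying the inductive hypothesis to $\Sigma_x X$ (with $n - 1$ in place of $n$) yields that $\Sigma_x X$ is a topological $(n-1)$-manifold, and each of its $j$-th iterated spaces of directions is homeomorphic to $\Sph^{(n-1)-j}$.

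To conclude that $\Sigma_x X$ itself is homeomorphic to $\Sph^{n-1}$, I would split on $n$: if $n \leq 4$, Theorem \ref{thm: dim3} applied to the topological $n$-manifold $X$ already gives this directly; if $n \geq 5$, I would invoke the topological Poincar\'e conjecture (Freedman in dimension $4$, and the Smale--Newman--Stallings theorems in dimensions $\geq 5$) applied to the closed topological $(n-1)$-manifold $\Sigma_x X$, which is homotopy equivalent to $\Sph^{n-1}$. Each $k$-th iterated space of directions of $X$ with $2 \leq k < n$ is a $(k-1)$-th iterated space of directions of some $\Sigma_x X$, hence homeomorphic to $\Sph^{(n-1)-(k-1)} = \Sph^{n-k}$ by the inductive hypothesis, and (1) follows.

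The principal obstacle is the external call to the topological Poincar\'e conjecture in dimensions $\geq 4$: it is the heavy fact needed to upgrade homotopy equivalence to homeomorphism, and no softer substitute is available for $n \geq 5$. Modulo that deep input, the argument amounts to the clean observation that condition (2) propagates one step under passing to a space of directions, so the inductive machinery provided by Theorem \ref{thm: topreg} and Theorem \ref{thm: dim3} runs at each stage without further difficulty.
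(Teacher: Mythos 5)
Your proof is correct and follows essentially the same route as the paper's: apply Theorem~\ref{thm: topreg} at each level to upgrade ``all spaces of directions homotopy equivalent to a fixed sphere'' to ``topological manifold,'' then invoke the generalized Poincar\'e conjecture to upgrade the resulting homotopy sphere to a genuine sphere. The only difference is organizational: you set up an explicit induction on $n$ and verify at each stage that condition (2) for $X$ hands down a shifted copy of (2) to each $\Sigma_xX$, whereas the paper argues directly for each $k$ by applying Theorem~\ref{thm: topreg} to $\Sigma^k$ using the hypothesis on its $(k{+}1)$-th iterated spaces of directions. Your induction is marginally more careful at the extreme index $k=n-1$, where the paper's invocation of ``the assumption'' for $\Sigma^{n-1}$'s spaces of directions (which are $n$-th iterated) is not literally covered by the stated range $1\le k<n$ but resolves easily; your inductive formulation sidesteps this by bottoming out at $n=2$ with Theorem~\ref{thm: dim3}. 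The appeal to Theorem~\ref{thm: dim3} for $n\le 4$ is harmless but not needed, since the Poincar\'e conjecture in the relevant dimensions covers those cases too.
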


\begin{proof}
Clearly (1) implies (2).

Assuming (2), we deduce from Theorem \ref{thm: topreg} that $X$ is a topological manifold.
Thus, $X$ is a GCBA space and all of its iterated spaces of directions are compact $\CAT(1)$ spaces.
Let $\Sigma $ be a $k$-th iterated space of directions of $X$.  By assumption, all of its spaces of directions
are homotopy equivalent to $\Sph^{n-k-1}$. By Theorem \ref{thm: topreg}, the space $\Sigma $ is
a topological manifold. Due to the resolution of the (generalized) Poincar\'e conjecture,
$\Sigma $ is homeomorphic to $\Sph ^{n-k}$.
\end{proof}

Iterated spaces of directions can be seen in factors of blow-ups of the original space.
More generally, we have:

 \begin{lem} \label{cor: limoflim}
Let  $X_i$ be complete GCBA spaces which are $\CAT(\kappa)$ for a fixed  $\kappa$.  Assume that
$(X_i,x_i)$ converge in the pointed Gromov--Hausdorff topology to a  GCBA space $(X,x)$.

Then, for any non-empty $k$-th iterated space of directions $\Sigma ^k$ of $X$, there exists
a sequence $z_i\in X_i$
and a sequence  $t_i \geq 1$, such that, possibly after choosing a subsequence,
we have the following convergence in the pointed Gromov--Hausdorff topology:
$$(\R^{k-1} \times C\Sigma ^k,0) =  \lim _{i\to \infty}   (t_i\cdot X_i, z_i) \, .$$
\end{lem}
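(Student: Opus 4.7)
The plan is to proceed by induction on $k$, using that in a GCBA space the tangent cone at a point is the pointed Gromov--Hausdorff limit of rescalings of the ambient space, together with a diagonal argument that transfers the rescaling procedure from the limit $X$ to the approximating sequence $X_i$.

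For the base case $k=1$, write $\Sigma^1 = \Sigma_yX$ for some $y\in X$ and lift $y$ to points $y_i\in X_i$ with $y_i\to y$. Since $X$ is GCBA, $(s\cdot X, y) \to (T_yX,0) = (C\Sigma^1,0)$ as $s\to\infty$. Combining this with $(X_i,y_i)\to(X,y)$ via a standard diagonal argument, we extract a sequence $t_i\to\infty$ with $(t_i\cdot X_i,y_i)\to(C\Sigma^1,0)$, which is the desired conclusion for $k=1$.

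For the inductive step, assume the result for $k-1$, applied to the $(k-1)$-th iterated space of directions $\Sigma^{k-1}$ of which $\Sigma^k$ is a space of directions: $\Sigma^k = \Sigma_v\Sigma^{k-1}$ for some $v\in \Sigma^{k-1}$. By induction there exist $z_i'\in X_i$ and $s_i\geq 1$ with
\[ (s_i\cdot X_i, z_i') \to (\mathbb{R}^{k-2}\times C\Sigma^{k-1}, 0). \]
Let $v'\in C\Sigma^{k-1}$ be the point at distance $1$ from the apex in direction $v$, and set $w := (0,v') \in \mathbb{R}^{k-2}\times C\Sigma^{k-1}$. Since tangent cones distribute over products and the tangent cone of a Euclidean cone at a non-apex point splits off a radial $\mathbb{R}$-factor, we obtain
\[ T_w\bigl(\mathbb{R}^{k-2}\times C\Sigma^{k-1}\bigr) = \mathbb{R}^{k-2}\times\bigl(\mathbb{R}\times C\Sigma_v\Sigma^{k-1}\bigr) = \mathbb{R}^{k-1}\times C\Sigma^k. \]

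To complete the induction, choose $w_i\in X_i$ whose images in $s_i\cdot X_i$ converge to $w$; such points exist by pointed GH-convergence. Then $(s_i\cdot X_i, w_i) \to (\mathbb{R}^{k-2}\times C\Sigma^{k-1}, w)$, while for every fixed $u\geq 1$ the further rescaling $(u\cdot s_i \cdot X_i, w_i)$ GH-converges to $(u\cdot(\mathbb{R}^{k-2}\times C\Sigma^{k-1}), w)$, and the latter in turn converges to $(\mathbb{R}^{k-1}\times C\Sigma^k, 0)$ as $u\to\infty$. A standard diagonal argument then yields $u_i\to\infty$ with $(u_is_i\cdot X_i, w_i) \to (\mathbb{R}^{k-1}\times C\Sigma^k,0)$. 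Setting $t_i := u_is_i$ (discarding finitely many indices to guarantee $t_i\geq 1$) and $z_i := w_i$ finishes the induction.

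The only technical obstacle is the diagonal selection of $u_i$: one must interleave the two successive blow-ups — convergence of $X_i$ to $X$ on the one hand, and convergence of rescalings around $w$ to the iterated tangent cone on the other — so that both limits are achieved simultaneously. This is routine in the GCBA setting since local uniformity of GH-approximations on bounded metric balls is automatic, and the auxiliary lift $w_i$ of $w$ only needs to be chosen up to error tending to $0$ in the $s_i$-rescaled distance.
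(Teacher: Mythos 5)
Your proof is correct and follows essentially the same route as the paper's: both arguments rest on the identity $T_{(0,v')}\bigl(\R^{k-2}\times C\Sigma^{k-1}\bigr)=\R^{k-1}\times C\Sigma^k$ together with the fact that the class of pointed GH-limits of rescaled subsequences $(t_i\cdot X_i,z_i)$, $t_i\geq 1$, is closed under basepoint changes, further rescaling, and passing to tangent cones. The paper simply packages the diagonal extraction you carry out by hand into the observation that this class is closed under pointed Gromov--Hausdorff convergence; otherwise the induction and the splitting computation are identical.
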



\begin{proof}
   Consider  the set  $\mathcal  L$ of (isometry classes of) all pointed locally compact spaces $(Y,y)$ which can be obtained as a  pointed Gromov--Hausdorff limit of a  subsequence of a sequence $(t_i\cdot X_i,y_i)$, for some    $y_i\in  X_i$ and some sequence $t_i \geq 1$.

    The set $\mathcal L$ consists of complete GCBA spaces,  it contains the space $(X,x)$. With any space
    $(Y,y)$, the family $\mathcal L$ contains the space $(Y,y')$, for any $y' \in Y$. Thus, we may ignore the base points.
      Moreover,
    $\mathcal L$  is closed under rescaling  with numbers $t\geq 1$ and under pointed Gromov--Hausdorff convergence.  Thus, with every space $Z$ the family  $\mathcal L$ contains any of the  tangent spaces $T_zZ$.

 For any   $k\geq 1$ and  any non-empty $k$-th  iterated space of directions   $\Sigma ^k$ of $X$, we need to prove that 
    $Z= \R^{k-1} \times C\Sigma^k$ is contained in $\mathcal L$.

    We proceed by induction on $k$. The case $k=1$, thus $C\Sigma ^1$ being a tangent cone of $X$ at some point is
    already verified.

    Assume that we have already  verified the claim for $k$. Let $\Sigma ^k$ be
     any $k$-th iterated space of directions  of $X$  and let $v\in \Sigma ^k$   be arbitrary,
     such that $\Sigma := \Sigma _v \Sigma ^k$ is not empty.

     By the inductive assumption, the space $Z= \R^{k-1} \times C\Sigma^k$ is contained in $\mathcal L$.
    Then, also  $T_vZ=\R ^k \times C  \Sigma$ is contained in $\mathcal L$.
    This verifies the claim for $k+1$ and finishes the proof of the lemma.
 \end{proof}

\subsection{Limits of Riemannian manifolds}
Now we are in position to formulate and to prove the  following generalization of the remaining part of 
Theorem \ref{thm: riemlimreg}. Its proof relies on some stability properties of strainer maps.

\begin{thm}
 Under the assumptions of Theorem \ref{thm: riemlimreg}, for any $k\leq n$,  any $k$-th iterated   space of directions of $X$  is
homeomorphic $\Sph ^{n-k}$.

%
%
\end{thm}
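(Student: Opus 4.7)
By Lemma \ref{lem: limits} it suffices to verify condition (2), namely that every $k$-th iterated space of directions $\Sigma^k$ of $X$ is homotopy equivalent to $\Sph^{n-k}$; the lemma then promotes each such equivalence to a homeomorphism. Fix one such $\Sigma^k$. Lemma \ref{cor: limoflim} produces $z_i \in M_i$ and scales $t_i \geq 1$ so that $(t_i M_i, z_i)$ converges in the pointed Gromov--Hausdorff topology to $(Y_k, 0) := (\R^{k-1} \times C\Sigma^k, 0)$. Because rescaling by $t_i \geq 1$ preserves both the upper sectional curvature bound (with constant $\max(\kappa,0)$) and the injectivity radius bound $r$, the already established Theorem \ref{thm: part1} applies to $t_i M_i \to Y_k$ and exhibits $Y_k$ as a topological $n$-manifold.

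The space of directions of $Y_k$ at the cone point is the spherical join $\Sph^{k-2} * \Sigma^k$, i.e. the $(k-1)$-fold suspension of $\Sigma^k$. Since $Y_k$ is a topological $n$-manifold, Lemma \ref{lem: 1to3} gives $\Sph^{k-2} * \Sigma^k \simeq \Sph^{n-1}$, and the suspension isomorphism then yields $\tilde H_*(\Sigma^k) \cong \tilde H_*(\Sph^{n-k})$. For $n-k \leq 2$ this, together with the classification of low-dimensional compact ANR homology manifolds, already forces $\Sigma^k \simeq \Sph^{n-k}$; together with Lemma \ref{lem: limits} this handles the low-codimension range.

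The remaining and main obstacle is $\pi_1(\Sigma^k)$ in the range $n-k\geq 3$, since the suspension identity controls only homology. Here I would invoke the stability of strainer maps foreshadowed in the statement. Each Riemannian $M_i$, by virtue of the injectivity radius bound, admits at every point a full-rank $(n,0)$-strainer map built from $n$ pairs of antipodal endpoints of orthogonal geodesic segments. Transferring these along the convergence $(t_i M_i, z_i) \to (Y_k, 0)$ should furnish, for every $\delta > 0$, an $(n,\delta)$-strainer map on $Y_k$ near $0$; iterating Lemma \ref{cor: limoflim} at non-apex points produces the same wealth of strainers on every blow-up $\R^{j-1}\times C\Sigma^j$. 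Extracting limit directions as $\delta \to 0$ from these strainers yields in each iterated $\Sigma^j$ a configuration of $n-j+1$ mutually orthogonal antipodal pairs; CAT$(1)$ rigidity identifies their convex hull with the round $\Sph^{n-j}$, and the dimension bound $\dim \Sigma^j = n-j$ forces equality. In particular $\Sigma^k$ is the round $\Sph^{n-k}$, hence simply connected, and Whitehead's theorem combined with the homology calculation gives $\Sigma^k \simeq \Sph^{n-k}$; Lemma \ref{lem: limits} then concludes. The delicate technical point on which the whole plan hinges is precisely this stability statement: verifying that the $(n,0)$-strainers on the $M_i$ survive the Gromov--Hausdorff limit as $(n,\delta)$-strainer maps on $Y_k$ with arbitrarily small $\delta$, so that the rigidity extraction is legitimate. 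This is where the ``stability properties of strainer maps'' from \cite{LN} must enter in an essential way, and it is the step I expect to consume the majority of the technical work.
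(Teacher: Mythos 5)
Your opening moves are correct and, for the homology computation, genuinely slicker than the paper: applying Lemma \ref{cor: limoflim} to exhibit $Y_k = \R^{k-1}\times C\Sigma^k$ as a blow-up limit of (rescaled) $M_i$, verifying that rescaling by $t_i\geq 1$ keeps the curvature bound and injectivity radius in the admissible range, and then invoking Theorem \ref{thm: part1} to conclude $Y_k$ is a topological $n$-manifold, so that $\Sigma_0 Y_k \simeq \Sph^{k-2}*\Sigma^k$ is homotopy equivalent to $\Sph^{n-1}$ by Lemma \ref{lem: 1to3}. The suspension isomorphism then delivers $\tilde H_*(\Sigma^k)\cong \tilde H_*(\Sph^{n-k})$, and your handling of $n-k\leq 2$ via Theorem \ref{thm: bing} is fine. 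All of this is sound and is not how the paper computes the homology.

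The problem, which you correctly flag yourself, is that for $n-k\geq 3$ this leaves the fundamental group of $\Sigma^k$ entirely unconstrained: the $(k-1)$-fold suspension of a non-simply-connected integral homology $(n-k)$-sphere is also homotopy equivalent to $\Sph^{n-1}$, so nothing so far rules out $\Sigma^k$ being, say, a Poincar\'e homology sphere. The fix you sketch --- extracting $(n,\delta)$-strainers on $Y_k$ with $\delta\to 0$ from the smooth $M_i$ and then invoking a CAT$(1)$ rigidity of orthogonal antipodal configurations to force $\Sigma^j$ to be a round sphere --- is not a proof. It is not established in \cite{LN} or here that a strainer of full dimension survives the GH limit with $\delta\to 0$ (indeed the limit space is typically singular and does not admit $(n,0)$-strainers), nor is the rigidity statement (``the convex hull of $n-j+1$ mutually orthogonal antipodal pairs is a round $\Sph^{n-j}$'') invoked correctly: having such a configuration yields an isometrically embedded round sphere but does not by itself identify $\Sigma^j$ with it, and in any case one cannot produce such a configuration from $\delta$-strainers without a separate rigidity-in-the-limit argument.

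The paper closes this gap very differently. It fixes an explicit $(k-1,\delta)$-strainer $F$ on $Y_k$ built from the Euclidean coordinate directions, transfers it to strainers $F_i$ on the smooth approximating manifolds $N_i$, and observes that because the $N_i$ are smooth with large injectivity radius, the fibers $\Pi_i$ are smooth submanifolds and the distance function to $p_i$ on $\Pi_i$ has no critical points near $p_i$ (a gradient estimate coming from strainer openness). Morse theory then shows that the distance spheres $S_r(p_i)\cap\Pi_i$ are diffeomorphic to $\Sph^{n-k}$, with a uniform local contractibility function. Petersen's stability theorem (Theorem \ref{thm: petersen}) promotes the GH convergence $S_r(p_i)\cap\Pi_i \to S_r(0)\cap\Pi$ and then the blow-up convergence $m\cdot(S_{1/m}(0)\cap\Pi)\to \Sigma^k$ to homotopy equivalences, yielding $\Sigma^k\simeq\Sph^{n-k}$. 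The smoothness of the $N_i$ is used essentially and is exactly what supplies the simple connectivity your argument lacks. Your proposal, as it stands, has a genuine gap at precisely the point you identified.
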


\begin{proof}
 It  suffices  to prove that, for all
$ k < n$, any $k$-th iterated space of directions $\Sigma ^k$ of $X$ is
homotopy equivalent to $\Sph^{n-k}$,  Lemma \ref{lem: limits}.

 Let us fix such $\Sigma =\Sigma ^k$.  Due to Corollary \ref{cor: limoflim} we get (by rescaling the manifolds $M_i$)
 a sequence of pointed Riemannian manifolds  $(N_i,p_i)$ with the following properties.
 The manifold $N_i$ is $\CAT (\kappa_i)$, with $\kappa _i $ converging to $0$, and the sequence $(N_i,p_i)$ converges in the pointed Gromov--Hausdorff topology to $Y= (\R^{k-1} \times C\Sigma, 0)$.

 We fix the standard coordinate vectors $e_1,....,e_{k-1} \in \R^{k-1}  \times \{0\} \subset Y$  and consider the
 map $F:Y\to \R^{k-1}$ whose coordinates $f_1,...,f_{k-1}$ are the distance functions to the points $e_j$.
 The geodesics from $e_j$ to $0$ do not branch at $0$. By
 definition of strainer maps, \cite[Sections 7, 8]{LN},
 for every $\delta >0$ there exists some $\epsilon >0$ such that $F$ is a $(k-1,\delta)$-strainer map
 in the ball $B$ of radius $\epsilon$ around $0$ in $Y$.  We fix $\delta < \frac 1 {20\cdot  k^2}$ and $\epsilon $ as above.

 We take  $(k-1)$-tuples of points in $N_i$ converging to  $(e_1,...,e_{k-1})$ and consider the correspondingly defined maps $F_i:N_i \to \R^{k-1}$ which converge to $F$.  By the openness property of strainers the following holds true, \cite[Lemma 7.8]{LN}:
 For all  $i$ large enough,  the map $F_i$ is a $(k-1,\delta)$-strainer in the ball  $B^i$ of radius $\epsilon$ around $p_i$.

 Denote by $\Pi $ the fiber of $F$ in $B$  through $0$ and by $\Pi_i$ the fiber  of $F_i$ in $B^i$ through  $p_i$.
The fibers $\Pi_i$ are locally uniformly contractible. More precisely,	
by \cite[Lemma 7.11, Theorem 9.1]{LN}, there exists some $\epsilon _1 >0$ with the  following property.  For all $i$ large enough,
any $q\in \Pi_i $ and any $r<\epsilon _1$ such that  $\bar B_r (q) \cap \Pi _i$ is compact, this compact set is contractible.

  Denote by $g:\Pi \to \R$ and by $g_i:\Pi_i \to \R$ the distance functions to the points $0$ and $p_i$ respectively.

 By the extension property of strainers, we may assume, after making $\epsilon $ smaller, that  the map
 $\hat F =(F,g) :V\to \R^k $ is a $(k,12\cdot \delta )$-strainer map on an open  neighborhood $V$ of
 $B_{\epsilon} ^{\ast} (0) \cap \Pi$ in $Y$.    The fibers of the map $\hat F$ through points on $\Pi$ are (compact)
 distance spheres in $\Pi$ around $0$.

 From the homotopy  stability of fibers of strainer maps  \cite[Theorem 13.1] {LN},  for any $0<r<\epsilon$ there exists
 some $i_0$ such that for all $i>i_0$ the following holds true.  The distance sphere $S_r (p_i) \cap \Pi_i$ is compact
 and homotopy equivalent to the distance sphere $S_r (0) \cap \Pi$.

 We subdivide the rest of the proof in 6 steps.

 \emph{Step 1}:  The  manifold $N_i$ has injectivity  radius larger than $2$, for all large $i$.   The strainer map $F_i:B^i \to \R^{k-1}$ is smooth. The distance function $g_i : B^i \to \R$ is smooth outside $p_i$.

Indeed, the first statement is a consequence of our assumption that $N_i$ is $\CAT(\kappa _i)$ with $\kappa _i$ converging
to $0$.  The remaining statements follow from the first one.

\emph{Step 2}: The strainer map $F_i :B^i \to \R^{k-1}$ is a submersion. Thus $\Pi_i$ is a smooth submanifold of $N_i$.

Indeed, the strainer map $F_i$ is a    $2\sqrt k$-open map, see \cite[Lemma 8.2]{LN}.
In particular, the  differential  of $F_i$ at any point of $B^i$  is surjective.
This implies the first and, therefore, the second claim.

\emph{Step 3}: There exists $0<\epsilon _0 <\frac {\epsilon_ 1} {2}$ such that,  for all $i$ large enough, 
the map $g_i :\Pi _i \cap B_{\epsilon _0} ^{\ast} (p_i)  \to \R$ has at all points a gradient of norm between $\frac 1 2$ and $1$,
with respect to the intrinsic metric of $\Pi_i$.

Indeed, for any $x\in B _{\epsilon }^{\ast}  (p_i)$ the gradient $\nabla _x g_i$ of the map $g_i : B^i \to \R$ is the unit velocity vector of the geodesic connecting $p_i$ with $x$.   Thus, for every $x\in \Pi_i \setminus \{ p_i \}$ the gradient of
$g_i$ at $x$ with respect to the induced metric of $\Pi _i$  is the projection of $\nabla _x g_i$ to the tangent space
$T_x \Pi_i$.

There exists some $\epsilon _0>0$ such that for all $i$ large enough and all $x\in \Pi _i \cap B_{\epsilon _0} ^{\ast} (p_i) $
the inequality $$|DF_i (\nabla _x g_i ) | \leq k\cdot 2\cdot \delta   < \frac 1 {10\cdot  k} \, ,$$
due to \cite[Lemmas 7.6, 7.10, 7.11]{LN}.

Since the  differential  of $F_i$ at $x$  is $2\sqrt k$-open this implies that the projection of
$\nabla _x g_i$ to the tangent space
$T_x \Pi_i$ has norm at least $\frac 1 2$.

\emph{Step 4}: In the notations above, for all $i$ large enough and all $0<r<\epsilon _0$,
 the distance sphere $S_r (p_i)= g_i^{-1} (r) \subset  \Pi_i$  is diffeomorphic to $\Sph^{n-k}$.
 Moreover, $S_r (p_i)$ is locally uniformly contractible with respect to the
 contractibility function $\rho :[0,r) \to \R$ given by $\rho (s)= 2s$.

 Indeed,  for all sufficiently small $r$ (depending on $i$), the fact that $S_r (p_i)$ is diffeomorphic to a sphere
 is true for  every smooth submanifold  of a smooth Riemannian manifold, as easily seen in local coordinates.

 The fact that for all $r<\epsilon_0$ the level sets  of $g_i$ are diffeomorphic among each other is a consequence of the (easy part) of
 Morse theory, since $g_i$ has no critical points in $\Pi_i$.

 Finally, the gradient flow of the function $g_i $ on $\Pi_i$ retracts $\Pi  _i \setminus \{p_i \}$ onto $S_r (p_i)$. 
   Moreover, along this retraction,  any point moves with velocity less than $1$ and the distance to $S_r (p_i)$ decreases with velocity at least $\frac 1 2$. Thus, for any point $q\in S_r (p_i)$ and any $s<r$ the retraction sends the  ball $\bar B_s (q) \cap \Pi_i$ into the ball $\bar B_{2s} (q) \cap S_r (p_i)$.
 
 Since the ball $\bar B_s (q) \cap \Pi_i$  is contractible, we deduce that the ball $\bar B_s (q) \cap S_r (p_i)$ is contractible inside the ball $\bar B_{2s} (q) \cap S_r (p_i)$.  Finishing the proof of Step 4.

\emph{Step 5}:  For every $r<\epsilon _0$ the distance sphere $S_r (0) \cap \Pi$ is homotopy equivalent 
to $\Sph ^{n-k}$. Moreover, $S_r (0) \cap \Pi$  is uniformly locally contractible with respect to the contractibility function
$\rho :[0,r) \to \R$ given by $\rho (s)= 2s$.

Indeed, the  sets $S_r (p_i)$ converge to $S_r(p)$ in the Gromov--Hausdorff topology. Hence the result follows from
\cite[Section 5]{Petersen}.

\emph{Step 6}:  The space $Y$ is a cone, hence invariant under rescalings. However, the rescaled sequence
  $(m\cdot \Pi , 0) \subset Y$ converges to the factor $(C\Sigma,0)$, for $m\to \infty$.

Under this convergence the rescaled spheres $m \cdot (S_{\frac 1 m} (0)\cap \Pi) $ converge to $\Sigma$.
All these spheres are homotopy equivalent to $\Sph^{n-k}$ and are uniformly locally contractible.
Applying Theorem \ref{thm: petersen} once more, we deduce that $\Sigma $ is homotopy equivalent to $\Sph ^{n-k}$.
\end{proof}

\section{A sphere theorem} \label{sec: last}
\subsection{Pure-dimensional spaces} In order to deduce Theorem \ref{thm: newchar} from
 Theorem \ref{thm: sphere}, we need to show  that a GCBA space all of whose
 spaces of directions are spheres (of a priori different dimensions)  must be a manifold. We address
 this question in a slightly more general setting.
 
 We define a GCBA space $X$ to be \emph{purely $n$-dimensional} if all of its non-empty open subsets
 have dimension $n$.  We say that $X$ is \emph{pure-dimensional} if $X$ is purely $n$-dimensional for some $n$.

 Due to \cite[Corollary 11.6]{LN}, a  GCBA space $X$ is purely $n$-dimensional if and only if all of its tangent spaces 
 $T_xX$ have dimension $n$. This happens if and only if all spaces of directions $\Sigma _xX$ have dimension $n-1$. 
 Using the stability of dimension under convergence proved in \cite{LN}, we can now show:
\begin{prop}  \label{prop: pure}
A connected GCBA space $X$  is pure-dimensional if and only if all tangent  spaces of 
 $X$
are pure-dimensional.
\end{prop}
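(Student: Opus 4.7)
The plan is to combine \cite[Corollary 11.6]{LN} (which characterizes purely $n$-dimensional GCBA spaces by all tangent spaces having dimension $n$) with the stability of dimension under pointed Gromov--Hausdorff convergence of GCBA spaces from \cite{LN}, advertised in the sentence preceding the proposition.

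For the direction $(\Rightarrow)$, I assume $X$ is purely $n$-dimensional, so \cite[Corollary 11.6]{LN} gives $\dim T_yX = n$ for every $y \in X$. Fix $x \in X$; since $T_xX$ is itself GCBA, a second application of \cite[Corollary 11.6]{LN} reduces the purity of $T_xX$ to showing that $\dim T_v(T_xX) = n$ for every $v \in T_xX$. The case $v=0$ is immediate from $T_0(T_xX) = T_xX$, so take $v \neq 0$. A standard diagonal extraction, based on the limits $(\lambda_i X, x) \to (T_xX, 0)$ and $(\mu_j T_xX, v) \to (T_v(T_xX), 0)$, yields a sequence of points $y_i \in X$ with $y_i \to x$ and scalings $t_i \to \infty$ such that $(t_iX, y_i) \to (T_v(T_xX), 0)$ in pointed Gromov--Hausdorff topology. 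Each rescaled space $t_iX$ remains purely $n$-dimensional and satisfies a (rescaled) common upper curvature bound, so the stability of dimension from \cite{LN} forces $\dim T_v(T_xX) = n$.

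For $(\Leftarrow)$, I assume every $T_xX$ is pure-dimensional and set $N_m := \{x \in X : \dim T_xX = m\}$, so that $X = \bigsqcup_m N_m$. Showing each $N_m$ is open suffices: by connectedness of $X$ exactly one $N_m$ is non-empty, which makes $x \mapsto \dim T_xX$ constant, and \cite[Corollary 11.6]{LN} then concludes that $X$ is purely $n$-dimensional. To prove openness, I fix $x \in N_m$, so $T_xX$ is purely $m$-dimensional. The rescaled balls $\epsilon^{-1}\bar B_\epsilon(x) \subset \epsilon^{-1}X$ are close to the unit ball around $0$ in $T_xX$ in the Gromov--Hausdorff topology as $\epsilon \to 0$; applying the stability of dimension from \cite{LN} to the sub-balls of radius $\rho$ around $y$'s image $v_y \in T_xX$ (which have dimension $m$ by pure $m$-dimensionality of $T_xX$), I obtain $\dim B_{\epsilon \rho}(y) = m$ for $y$ close to $x$ and $\rho$ small. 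Invoking Kleiner's identity $\dim B_s(y) = \sup_{z \in B_s(y)} \dim T_zX$ and the upper semicontinuity of $z \mapsto \dim T_zX$ (itself a consequence of $\dim B_s(y) \to \dim T_yX$ as $s \to 0$, by stability of dimension), I deduce $\dim T_yX = m$ for all such $y$. Hence $N_m$ is open.

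The main obstacle is the iterated blow-up in the $(\Rightarrow)$ direction: I have to realize the double tangent $T_v(T_xX)$ as a pointed Gromov--Hausdorff limit of a single family of rescalings of $X$ that remain uniformly controlled in both curvature and pure dimension, so that the stability of dimension from \cite{LN} can actually be invoked. The rest of the argument is then essentially a bookkeeping exercise that propagates pure-dimensionality through the two characterizations in \cite[Corollary 11.6]{LN}.
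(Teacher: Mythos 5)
Your ``$\Rightarrow$'' direction is essentially the paper's argument: blow up $X$ at $x$, then use the stability of dimension under Gromov--Hausdorff convergence of GCBA spaces (the paper invokes \cite[Lemma 11.5]{LN} directly, without your diagonal extraction, but the content is the same).

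Your ``$\Leftarrow$'' direction, however, has a genuine gap. What the blow-up at $x$ together with Kleiner's identity $\dim B_s(y)=\sup_{z\in B_s(y)}\dim T_zX$ actually yields is only the \emph{upper} bound $\dim T_yX\le m$ for $y$ near $x$; that is, you re-prove upper semicontinuity of $z\mapsto\dim T_zX$, equivalently that the set $\{z:\dim T_zX\ge m\}$ is closed. You do not obtain $\dim T_yX\ge m$, and hence do not obtain that $N_m$ is open. The step ``$\dim B_{\epsilon\rho}(y)=m$ and $\dim B_s(y)\to\dim T_yX$ imply $\dim T_yX=m$'' fails on quantifiers: the scale $s_0(y)$ below which $\dim B_s(y)$ stabilizes at $\dim T_yX$ depends on $y$ and degenerates as $y$ varies, while your rescaling around $x$ only controls balls around $y$ of radius comparable to $d(x,y)$. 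So the value $m$ you see at scale $\epsilon\rho$ could well be contributed by points $z\ne y$ in the ball, and $\dim T_yX$ could drop. Nothing in your argument rules out an isolated nearby $y$ with $\dim T_yX<m$.

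The paper closes exactly this gap by a different device. It passes to the set $X^n$ of top-tangent-dimension points, uses \cite[Corollary 11.6]{LN} to see $X^n$ is closed, and then argues by contradiction: choosing $y\notin X^n$ with a nearest point $x\in X^n$ and looking at points $x_i=\gamma(1/i)$ along the geodesic from $x$ to $y$, one gets \emph{entire balls} $B_{1/i}(x_i)$ disjoint from $X^n$, hence of dimension $<n$ by Kleiner's identity; after rescaling these balls converge to a unit ball in $T_xX$ around the starting direction $\gamma'(0)$, which then has dimension $<n$, contradicting pure $n$-dimensionality of $T_xX$. The point is that the nearest-point construction upgrades ``a nearby point of low tangent dimension'' to ``a nearby \emph{ball} of low dimension,'' which is exactly what the dimension-stability lemma can actually exploit. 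That upgrade is the idea missing from your argument; without it you can only conclude $\dim T_yX\le m$, not $N_m$ open.
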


\begin{proof}
Let  $X$ be purely $n$-dimensional  and $x\in X$ arbitrary. Applying \cite[Lemma 11.5]{LN} to the  convergence of the rescaled balls in $X$ around $x$ to $T_xX$ we deduce, that for any $v\in T_xX$ the dimension of $T_v (T_xX)$ is $n$.
Thus, $T_xX$ is purely $n$-dimensional.  

 Assume that all spaces of directions of $X$ are pure-dimensional. By the connectedness of $X$ it suffices to prove 
 that every point $x\in X$ has a pure-dimensional open neighborhood. Therefore, we may replace $X$ by a small
 ball around some of its points and assume that $X$ is a geodesic space  and  has finite dimension $n$.
  Consider the set $X^n$ 
 of all points in $X$ for which $T_xX$ has dimension $n$.  By  \cite[Corollary 11.6]{LN}  the set $X^n$ is closed in $X$.
   
 Assume that $X^n$ is not $X$.  Then we find a point $y\in X\setminus X^n$ such that there exists a point $x\in  X^n$
 closest to $y$ among all points of $X^n$.  Consider the geodesic $\gamma$ from $x$ to $y$ and set 
 $x_i=\gamma (\frac 1 i)$.   Then the ball $B_{\frac 1 i} (x_i)$ does not intersect $X^n$, hence has dimension less than $n$.  Under the convergence of $( i\cdot X, x)$ to $T_xX$, the closed balls $i\cdot \bar B_ {\frac 1 i} (x_i)$  in $X$ converge to the closed ball of radius $1$  around the starting direction $v=\gamma '(0)\in \Sigma_x \subset T_x$.
 
 Applying \cite[Lemma 11.5]{LN} again, we see that the open ball $B_1(v)$ in $T_x$ has dimension less than $n$.
 Since $T_xX$ is $n$-dimensional, this contradicts the assumption that $T_xX$ is pure-dimensional.  This contradiction
 shows $X=X^n$ and finishes the proof.
\end{proof}

\subsection{The conclusions}
The proof of Theorem \ref{thm: sphere} relies also on the following observation, well-known to experts. We could not find a reference and include a short proof.
\begin{lem} \label{lem: cat}
If a closed topological  $n$-manifold $M$  is  covered by two contractible open subsets $U,V$
then $M$ is homeomorphic to $\Sph ^n$.
\end{lem}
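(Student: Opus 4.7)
The plan is to show that $M$ is a homotopy sphere and then invoke the topological Poincar\'e conjecture. Since $M$ is covered by the two contractible open sets $U$ and $V$, its Lusternik--Schnirelmann category is at most $1$, so every cup product of two positive-degree reduced cohomology classes of $M$ vanishes (equivalently, $M$ is homotopy equivalent to the unreduced suspension of $W := U \cap V$).

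Applied to the cover $\{U, V\}$, Mayer--Vietoris with arbitrary coefficients gives $\tilde{H}_k(M) \cong \tilde{H}_{k-1}(W)$ for every $k \geq 1$. I first work over $\mathbb{Z}/2$, where Poincar\'e duality holds unconditionally on the closed $n$-manifold $M$: the nondegeneracy of the cup-product pairing $H^k(M;\mathbb{Z}/2) \otimes H^{n-k}(M;\mathbb{Z}/2) \to H^n(M;\mathbb{Z}/2) = \mathbb{Z}/2$, combined with the vanishing of all cup products, forces $H^k(M;\mathbb{Z}/2) = 0$ for $0 < k < n$, so $M$ has the $\mathbb{Z}/2$-homology of $\Sph^n$. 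In particular, $\tilde{H}_0(W;\mathbb{Z}/2) = \tilde{H}_1(M;\mathbb{Z}/2) = 0$, and since $W$ is locally path-connected (as an open subset of a manifold), it is path-connected; Van Kampen's theorem then yields $\pi_1(M) = \pi_1(U) \ast \pi_1(V) = 1$. Being simply connected, $M$ is orientable, so integral Poincar\'e duality applies, and rerunning the cup-length argument over $\mathbb{Q}$ and over $\mathbb{Z}/p$ for all primes $p$, together with universal coefficients, yields $H_k(M;\mathbb{Z}) = 0$ for $0 < k < n$. Hence $M$ is a homotopy $n$-sphere.

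The topological Poincar\'e conjecture---Smale for $n\geq 5$, Freedman for $n=4$, Perelman for $n=3$, and classical for $n\leq 2$---then gives $M \cong \Sph^n$. The most delicate step is organizing the coefficient computations to obtain integer homology vanishing rather than just mod $2$ or rational vanishing, and this is handled by first extracting simple connectedness from the $\mathbb{Z}/2$ analysis before upgrading to orientability. Otherwise the proof is a clean combination of Mayer--Vietoris, Lusternik--Schnirelmann category, and Poincar\'e duality, with the resolved Poincar\'e conjecture as the final input.
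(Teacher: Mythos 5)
Your proof is correct and takes essentially the same route as the paper: vanishing of cup products from the two-set contractible cover, Poincar\'e duality over various coefficient rings, Mayer--Vietoris and van Kampen to establish simple connectedness, Whitehead/Hurewicz, and the resolved Poincar\'e conjecture. The only differences are cosmetic reorderings---you derive orientability from simple connectedness, while the paper gets it directly from $H_{n-1}(M;\mathbb{Z}_2)=0$ via a criterion in Hatcher---and you are slightly more explicit than the paper that upgrading the field-coefficient vanishing to integral vanishing requires running the cup-length argument over $\mathbb{Q}$ and over $\mathbb{Z}/p$ for all $p$ together with universal coefficients, since the integral cup-product pairing is blind to torsion.
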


\begin{proof}
The assumption  implies that for any commutative ring 
$R$, the cup product of any two elements in the reduced cohomology $ H^{\ast} (M,R)$ is $0$,
\cite[Section 3.2, Exercise 2]{Hatcher}.   

By the Poincar\'e duality we deduce that $M$ has the same cohomology with $R$-coefficients as $\Sph ^n$ 
if $M$ is $R$-orientable. Applying this for $R=\mathbb Z_2$ we deduce that $H_{n-1} (M,\mathbb Z_2)=0$.
Therefore, $M$ is orientable, \cite[Chapter 3, Corollary 3.28]{Hatcher} with respect to integer coefficients and 
has the same integer homology and cohomology as $\Sph ^n$. 

 By the theorem of Mayer--Vietoris, $0=H_1(M)=H_0(U\cap V)$. Thus, the intersection $U\cap V$ is connected. 
 Applying van Kampen's theorem, we deduce that $M$ is simply connected. By the theorem of  Whitehead, $M$ 
 is  homotopy equivalent to $\Sph ^n$.  By the resolution of the (generalized) Poincar\'e conjecture, $M$ is homeomorphic to $\Sph ^n$.
\end{proof}

Now we can finish:

\begin{proof}[Proof of Theorem \ref{thm: sphere}]
We proceed by induction on the (always finite) dimension of the compact GCBA space $\Sigma$. 

By assumption, $\Sigma$ is $\CAT(1)$ and the cone $C\Sigma$ is a geodesically complete $\CAT(0)$ space.

 If the dimension of $\Sigma $ is $0$, then $\Sigma $ is discrete and not a singleton.
All  points in $\Sigma $ have distance at least $\pi$ from each other.  The assumption on the triples of points
implies that $\Sigma $ has exactly two points. Hence $\Sigma $ is homeomorphic to $\Sph ^0$.

Assume now that the statement is proven for all spaces  of dimension less than $n$ and let 
$\Sigma $ be $n$-dimensional.  Let $x\in \Sigma $ be a point. Then the space of directions $\Sigma _x$ is
a GCBA space of dimension less than $n$.  If there exists a triple of points $v_1,v_2,v_3$ in $\Sigma _x$ with pairwise distances at least $\pi$ then we consider geodesics $\gamma _i$ in $\Sigma $ starting in $x$ in  the directions of 
$v _i$ (which exist by the geodesical completeness, see \cite[Section 5.5]{LN}).  Then the points $x_i=\gamma _i (\frac \pi 2)$ have in $\Sigma $ pairwise distances $\pi$, in contradiction to our assumption.

Thus, by the inductive assumption, each space of directions $\Sigma _x$ is homeomorphic to some sphere.
Therefore, all spaces of directions $\Sigma _x$ in $\Sigma $ and hence all tangent spaces $T_x$ are pure-dimensional.
Due to Proposition \ref{prop: pure}, the space $\Sigma $ must be purely $n$-dimensional. Then, all spaces of directions
$\Sigma _x$ are $(n-1)$-dimensional, hence homeomorphic to $\Sph ^{n-1}$ by the inductive hypothesis.

By Theorem \ref{thm: topreg}, the space $\Sigma $ is a topological $n$-manifold.  

Consider a pair of points $x,y \in \Sigma $ at distance $\pi$. By the assumption on triple of points,
 there are no points $z \in \Sigma$ with distance
at least $\pi$ to $x$ and $y$. Therefore, all of $\Sigma$ is contained in the union of the two balls
$B_{\pi} (x)$ and $B_{\pi} (y)$.    By the $\CAT(1)$ assumption, both balls are contractible. 
 Thus,  $\Sigma$ is  homeomorphic to $\Sph ^n$, by 
Lemma \ref{lem: cat}.
\end{proof}

\begin{proof}[Proof of Theorem \ref{thm: newchar}]
Let  $X$ be  a connected GCBA space that does not contain an isometrically embedded tree different from 
an interval.  

Let $x\in X$ be a point. If there is a triple of points $v_1,v_2,v_3$ in $\Sigma _x$ with pairwise distances at least $\pi$
then  we obtain an isometrically embedded tree by taking the union of 3 short geodesics $\gamma _i$ starting in the direction of $v_i$, in contradiction to
our assumption.  By  Theorem \ref{thm: sphere}, $\Sigma_x$ must be homeomorphic to some sphere.

  As in the first part of the proof of
Theorem \ref{thm: sphere}, we  now deduce from Proposition \ref{prop: pure}  and Theorem \ref{thm: topreg} 
that $X$ is a topological manifold.
\end{proof}

 Finally, from Theorem \ref{thm: sphere} and the optimal lower bound on the volume of balls, \cite[Proposition 6.1]{N2}, we  deduce:

	\begin{thm} \label{thm: volumesph}
	Let $\Sigma$ be a purely $n$-dimensional,
	compact, locally geodesically complete $\CAT(1)$ space.
	If $\mathcal{H}^n (\Sigma) < \frac{3}{2} \cdot \mathcal{H}^n (\Sph^n)$,
	then $\Sigma$ is homeomorphic to $\Sph^n$,
	where $\mathcal {H}^n$ is the $n$-dimensional Hausdorff measure.
\end{thm}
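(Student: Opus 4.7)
The plan is to verify the hypotheses of Theorem \ref{thm: sphere} after excluding, by a short volume-comparison argument, the existence of a triple of points in $\Sigma$ with pairwise distances at least $\pi$. The required injectivity radius bound should be automatic: in a $\CAT(1)$ space any two points at distance less than $\pi$ are joined by a unique geodesic, which is the sense of ``injectivity radius at least $\pi$'' needed in Theorem \ref{thm: sphere}. So the triple condition is the only substantial hypothesis to check, and the cited volume lower bound \cite[Proposition 6.1]{N2} will do the job.

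Concretely, I would argue by contradiction. Suppose $\Sigma$ contained three points $x_1, x_2, x_3$ with $d(x_i, x_j) \geq \pi$ for all $i \neq j$. Then the open metric balls $B_{\pi/2}(x_i)$ are pairwise disjoint: any common point $z$ of $B_{\pi/2}(x_i)$ and $B_{\pi/2}(x_j)$ would yield $d(x_i, x_j) < \pi$ by the triangle inequality. Since $\Sigma$ is purely $n$-dimensional, compact, locally geodesically complete and $\CAT(1)$, the optimal ball-volume estimate of \cite[Proposition 6.1]{N2} applied at each $x_i$ gives
\[
\mathcal{H}^n \bigl( B_{\pi/2}(x_i) \bigr) \;\geq\; \mathcal{H}^n \bigl( B_{\pi/2}(p) \bigr) \;=\; \tfrac{1}{2}\, \mathcal{H}^n(\Sph^n),
\]
where $p \in \Sph^n$ is any point, using that a closed ball of radius $\pi/2$ in $\Sph^n$ is a hemisphere. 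Summing the three disjoint contributions produces $\mathcal{H}^n(\Sigma) \geq \tfrac{3}{2}\,\mathcal{H}^n(\Sph^n)$, contradicting the hypothesis. Hence no such triple exists, and Theorem \ref{thm: sphere} identifies $\Sigma$ with $\Sph^n$ topologically.

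There is really no serious obstacle here; the proof is a one-step reduction to Theorem \ref{thm: sphere}. The minor points requiring attention are that the constant $\tfrac{3}{2}$ in the volume hypothesis is the sharp threshold making the disjoint-hemisphere count fail, that purity of dimension is used precisely so that the ball-volume estimate is genuinely $n$-dimensional (otherwise a low-dimensional pocket of $\Sigma$ could a priori make the argument vacuous), and that the strict inequality in the assumption on $\mathcal{H}^n(\Sigma)$ matches the non-strict inequality in \cite[Proposition 6.1]{N2} to yield the desired contradiction.
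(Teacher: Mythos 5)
Your proof is correct and follows the same argument as the paper: contradiction via three disjoint open balls $B_{\pi/2}(x_i)$, each of $\mathcal{H}^n$-measure at least $\tfrac{1}{2}\mathcal{H}^n(\Sph^n)$ by \cite[Proposition 6.1]{N2}, followed by an application of Theorem \ref{thm: sphere}. You additionally make explicit the (automatic, for a $\CAT(1)$ space) verification of the injectivity radius hypothesis, which the paper leaves implicit.
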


\begin{proof}
	Otherwise, $X$  contains a triple of points at pairwise distances at least $\pi$.  The open balls of radius $\frac \pi 2$
	around these points are disjoint.  Each of these balls has the $\mathcal H^n$-measure  not less than $\frac 1 2 \cdot \mathcal {H}^n (\Sph^n)$.  This  contradicts the prescribed upper volume bound of $X$.
\end{proof}

\bibliographystyle{alpha}
\bibliography{Reg}

\end{document}